\documentclass[a4paper]{article}
\usepackage{geometry}
\geometry{left=2.5cm,right=2.5cm,top=2.5cm,bottom=2.5cm}
\usepackage{amssymb}
\usepackage{latexsym}
\usepackage{amsmath}
\usepackage{indentfirst}
\usepackage{graphicx}
\usepackage[colorlinks=true]{hyperref}
\usepackage{mathrsfs}
\usepackage{chngcntr}
\usepackage{color}

\newtheorem{Thm}{Theorem}[section]

\newtheorem{Lem}{Lemma}[section]
\newtheorem{Pro}{Proposition}[section]

\newtheorem{Def}{Definition}[section]

\newcommand{\R}{\mathbb{R}}

\numberwithin{equation}{section}
% 统一证明结束符号
\newcommand{\qed}{\hfill $\Box$ \smallskip}
\newenvironment{proof}{\medskip\par\noindent{\bf Proof\/}:\quad}{\qed\par\medskip}
\begin{document}
	\title{Local uniqueness of multi-peak positive solutions to a class of fractional Kirchhoff equations}
	
	\author{Zhipeng Yang\thanks{Email: yangzhipeng326@163.com} \\[2pt]
		\small Department of Mathematics, Yunnan Normal University, Kunming, China\\
		\small Yunnan Key Laboratory of Modern Analytical Mathematics and Applications, Kunming, China
	}
	\date{}
	\maketitle
	
	\begin{abstract}
		This paper has two main purposes. In the first part, combining the nondegeneracy of the ground state with the Lyapunov--Schmidt reduction method, we prove the existence of multi-peak positive solutions to the singularly perturbed problem
		\[
		\Big(\varepsilon^{2s}a+\varepsilon^{4s-N} b\int_{\mathbb{R}^{N}}|(-\Delta)^{\frac{s}{2}}u|^2\,dx\Big)(-\Delta)^s u+V(x)u=u^p\quad \text{in }\mathbb{R}^{N},
		\]
		for all sufficiently small $\varepsilon> 0$, under the assumptions $2s<N<4s$, $1<p<2^*_s-1$, and some mild conditions on the potential $V$. The main difficulty comes from the interplay between the nonlocal operator $(-\Delta)^s$ and the nonlocal Kirchhoff term, which makes the corresponding limiting problem a coupled system of partial differential equations rather than a single fractional Kirchhoff equation. In the second part, under additional assumptions on $V$, we establish the local uniqueness of positive multi-peak solutions by means of a local Pohoz\v{a}ev identity.
		
		\smallskip
		\noindent {\bf Keywords}: Fractional Kirchhoff equations; Multi-peak solutions; Lyapunov-Schmidt reduction.
				
		\smallskip
		\noindent {\bf MSC2020}: 35R11; 35A15; 47G20.
	\end{abstract}

%\tableofcontents

\section{Introduction and main results}

In this paper, we are concerned with the following singularly perturbed fractional Kirchhoff problem
\begin{equation}\label{eq1.1}
	\Big(\varepsilon^{2s}a+\varepsilon^{4s-N} b\int_{\mathbb{R}^{N}}|(-\Delta)^{\frac{s}{2}}u|^2\,dx\Big)(-\Delta)^s u+V(x)u=u^p,\quad \text{in }\mathbb{R}^{N},
\end{equation}
where $a,b>0$, $\varepsilon>0$ is a parameter, $V: \mathbb{R}^{N} \rightarrow \mathbb{R}$ is a bounded continuous function, $(-\Delta )^s$ is the fractional Laplacian and $p$ satisfies
\begin{equation*}
	1<p<2_s^*-1=
	\begin{cases}
		\displaystyle\frac{N+2s}{N-2s}, &  0<s<\frac{N}{2}, \\
		+\infty, &  s\geq \frac{N}{2},
	\end{cases}
\end{equation*}
where $2_s^*$ is the standard fractional Sobolev critical exponent. Throughout this paper we assume
\[
0<s<1 \quad\text{and}\quad 2s<N<4s.
\]

Problem \eqref{eq1.1} and its variants have been studied extensively in the literature. The equation that goes under the name of Kirchhoff equation was proposed in \cite{Kirchhoff1883} as a model for the transverse oscillation of a stretched string in the form
\begin{equation}\label{eq1.2}
	\rho h \partial_{tt}^{2} u-\left(p_{0}+\frac{\mathcal{E}h}{2 L} \int_{0}^{L}\left|\partial_{x} u\right|^{2} d x\right) \partial_{xx}^{2} u=0,
\end{equation}
for $t \geq 0$ and $0<x<L$, where $u=u(t, x)$ is the lateral displacement at time $t$ and at position $x$, $\mathcal{E}$ is the Young modulus, $\rho$ is the mass density, $h$ is the cross section area, $L$ the length of the string, and $p_{0}$ is the initial stress tension.

Through the years, this model was generalized in several ways that can be collected in the form
\begin{equation*}
	\partial_{tt}^{2} u-M\big(\|u\|^{2}\big) \Delta u=f(t, x, u), \quad x \in \Omega,
\end{equation*}
for a suitable function $M:[0, \infty) \rightarrow \mathbb{R}$, called Kirchhoff function. Here $\Omega$ is a bounded domain of $\mathbb{R}^{N}$, and $\|u\|^{2}=\|(-\Delta)^{\frac{s}{2}} u\|_{2}^{2}$ denotes the nonlocal Dirichlet norm of $u$. The basic case corresponds to the choice
\begin{equation*}
	M(t)=a+b t^{\gamma-1}, \quad a \geq 0,\ b \geq 0,\ \gamma \geq 1.
\end{equation*}
Problem \eqref{eq1.2} and its variants have been studied extensively in the literature. Bernstein obtained a global stability result in \cite{Bernstein1940BASUS}, which was generalized to arbitrary dimension $N\geq 1$ by Pohoz\v{a}ev in \cite{Pohozaev1975MS}. We also point out that such problems may describe processes of some biological systems depending on the average of themselves, such as the density of population (see e.g. \cite{Arosio-Panizzi1996TAMS}). From a mathematical point of view, the interest in studying Kirchhoff equations comes from the nonlocality of Kirchhoff type equations. For instance, if we take $M(t)=a+bt$, the consideration of the stationary analogue of Kirchhoff’s wave equation leads to elliptic problems
\begin{equation}\label{eq1.3}
	\begin{cases}
		\Big(a+b\displaystyle\int_{\Omega}|(-\Delta)^{\frac{s}{2}} u|^2\,dx\Big)(-\Delta)^s u=f(x,u) &\text{in } \Omega, \\
		u=0 &\text{on } \partial \Omega,
	\end{cases}
\end{equation}
for some nonlinear functions $f(x,u)$. Note that the term $\big(\int_{\Omega}|(-\Delta)^{\frac{s}{2}} u|^2\,dx\big)(-\Delta)^s u$
depends not only on the nonlocal operator $(-\Delta)^s u$, but also on the integral of $|(-\Delta)^{\frac{s}{2}} u|^2$ over the whole
domain. In this sense, \eqref{eq1.1} and \eqref{eq1.3} are no longer usual pointwise equalities. This
new feature brings new mathematical difficulties that make the study of Kirchhoff type equations
particularly interesting.

When $s=1$ and $N=3$, \eqref{eq1.1} reduces to the following equation
\begin{equation}\label{eq1.4}
	\begin{cases}
		-\bigg(\varepsilon^2a+ \varepsilon b\displaystyle\int_{\mathbb{R}^3}|\nabla u|^2\,dx\bigg)\Delta u+ V(x)u=f(u), & x\in\mathbb{R}^3, \\
		u\in H^1(\mathbb{R}^3),&
	\end{cases}
\end{equation}
and the existence and multiplicity of solutions to \eqref{eq1.4} with $\varepsilon=1$ have been studied in some recent works. Li and Ye \cite{Li-Ye2014JDE} obtained the existence of a positive ground state of \eqref{eq1.4} with $f(u)=|u|^{p-1}u$ for $2<p<5$. In \cite{Deng-Peng-Shuai2015JFA}, Deng, Peng and Shuai studied the existence and asymptotic behavior of nodal solutions of \eqref{eq1.4} with $V$ and $f$ radially symmetric in $x$ as $b\to 0^+$. There are also some works concerning the concentration behavior of solutions as $\varepsilon\to 0^+$. It seems that He and Zou \cite{He-Zou2012JDE} were the first to study singularly perturbed Kirchhoff equations. In \cite{He-Zou2012JDE}, they considered problem \eqref{eq1.4} when  $V$ is assumed to satisfy the global condition of Rabinowitz \cite{Rabinowitz1992ZAMP}
\begin{equation*}
	\liminf _{|x| \rightarrow \infty} V(x)>\inf _{x \in \mathbb{R}^{3}} V(x)>0
\end{equation*}
and $f: \mathbb{R} \rightarrow \mathbb{R}$ is a nonlinear function with subcritical growth of type $u^{q}$ for some $3<q<5$. They proved the existence of multiple positive solutions for $\varepsilon$ sufficiently small. A similar result for the critical case $f(u)=\lambda g(u)+|u|^4u$ was obtained separately in \cite{He-Zou2014ADM} and \cite{Wang-Tian-Xu-Zhang2012JDE}, where the subcritical term $g(u)\sim |u|^{p-2}u$ with $4<p<6$. In \cite{He-Li-Peng2014ANS}, He, Li and Peng constructed a family of positive solutions which
concentrate around a local minimum of $V$ as $\varepsilon\to 0^+$
for a critical problem $f(u)=g(u)+|u|^4u$ with $g(u)\sim |u|^{p-2}u$ ($4<p<6$). For the more delicate case $f(u)=\lambda |u|^{p-2}u+|u|^4u$ with $2<p\leq 4$ we refer to He and Li \cite{He-Li2015CVPDE}, where a family of positive solutions which concentrate around a local minimum of $V$ as $\varepsilon\to 0^+$ was obtained. We refer e.g. to \cite{Deng-Peng-Shuai2015JFA,MR3218834,MR3360660,MR4293909,MR4305432,MR4021897} for more mathematical researches on Kirchhoff type equations in the whole space. We also refer to \cite{MR3987384} for a recent survey of the results connected to this model.

Very recently, Li et al. \cite{MR4021897} proved that the positive ground state solution of \eqref{eq1.4} with $V\equiv1$ and $f(u)=|u|^{p-1}u$ ($1<p<5$) is unique and nondegenerate. Then, using the Lyapunov--Schmidt reduction method, they proved the existence and uniqueness of single peak solutions to equation \eqref{eq1.4} for all $1<p<5$. Under some mild conditions on $V$, Luo et al. \cite{MR3988638} proved the existence of multi-peak solutions to \eqref{eq1.4}. As a continuation of the work \cite{MR3988638}, Li et al. \cite{MR4056531} established a local uniqueness result for multi-peak solutions by the technique of the local Pohoz\v{a}ev identity from \cite{MR3420409}. By local uniqueness we mean that if $u_{\varepsilon}^{1}, u_{\varepsilon}^{2}$ are two $k$-peak solutions concentrating at the same $k$ points, then $u_{\varepsilon}^{1} \equiv u_{\varepsilon}^{2}$ for $\varepsilon$ sufficiently small.

On the other hand, the interest in generalizing to the fractional case the model introduced by Kirchhoff does
not arise only for mathematical purposes. In fact, following the ideas of \cite{MR2675483} and the concept
of fractional perimeter, Fiscella and Valdinoci proposed in \cite{MR3120682} an equation describing the
behaviour of a string constrained at the extrema in which the fractional length of the rope appears. Recently, problems similar to \eqref{eq1.1} have been extensively investigated by many authors using different techniques and producing several relevant results  (see, e.g. \cite{MR4071927,MR4056169,MR4305431,Gu-Yang,MR3877201,MR3961733,MR3917341,MR3993416}).
\par
Notice that although it is already known that problem \eqref{eq1.1} admits multiple single-peak solutions, it is still an open problem whether there exist multi-peak solutions to \eqref{eq1.1}, in striking contrast to the extensive results on multi-peak solutions to singularly perturbed fractional Schr\"odinger equations. This motivates us to study multi-peak solutions to problem \eqref{eq1.1}. To be precise, we recall the standard definition of a multi-peak solution of \eqref{eq1.1}.

\begin{Def}\label{Def1.1}
	Let $k \in\{1,2,\ldots\}$. We say that $u_{\varepsilon}$ is a $k$-peak solution of \eqref{eq1.1} if $u_{\varepsilon}$ satisfies
	\begin{itemize}
		\item [$(i)$] $u_{\varepsilon}$ has $k$ local maximum points $y_{\varepsilon}^{i} \in \mathbb{R}^{N}$, $i=1,2, \ldots, k$, satisfying
		\begin{equation*}
			y_{\varepsilon}^{i} \rightarrow a_{i}
		\end{equation*}
		for some $a_{i} \in \mathbb{R}^{N}$ as $\varepsilon \rightarrow 0$ for each $i$.
		\item [$(ii)$] For any given $\tau>0$, there exists $R \gg 1$ such that
		\begin{equation*}
			\left|u_{\varepsilon}(x)\right| \leqslant \tau \quad \text{for all } x \in \mathbb{R}^{N} \setminus \bigcup_{i=1}^{k} B_{R \varepsilon}\left(y_{\varepsilon}^{i}\right).
		\end{equation*}
		\item [$(iii)$] There exists a constant $C>0$, independent of $\varepsilon$, such that
		\begin{equation*}
			\int_{\mathbb{R}^{N}}\left(\varepsilon^{2s} a\left|(-\Delta)^{\frac{s}{2}} u_{\varepsilon}\right|^{2}+u_{\varepsilon}^{2}\right)\,dx \leqslant C \varepsilon^{N}.
		\end{equation*}
	\end{itemize}
\end{Def}
\par
In order to state our main results, we recall some preliminary
facts about the fractional Laplacian. For $0<s<1$, the fractional
Sobolev space $H^s(\mathbb{R}^N)$ is defined by
\begin{equation*}
	H^{s}(\mathbb{R}^N)=\bigg\{u\in L^2(\mathbb{R}^N):\frac{u(x)-u(y)}{|x-y|^{\frac{N}{2}+s}}\in L^2(\mathbb{R}^{N}\times\mathbb{R}^{N})\bigg\},
\end{equation*}
endowed with the natural norm
\begin{equation*}
	\|u\|^2=\int_{\mathbb{R}^N}|u|^2\,dx+\int_{\mathbb{R}^N}\int_{\mathbb{R}^N}\frac{|u(x)-u(y)|^2}{|x-y|^{N+2s}}\,dx\,dy.
\end{equation*}
\par
The fractional Laplacian $(-\Delta)^s$ is the pseudo-differential
operator defined by
\begin{equation*}
	\mathcal{F}\big((-\Delta)^s u\big)(\xi)=|\xi|^{2s}\mathcal{F}(u)(\xi),\quad \xi\in \mathbb{R}^N,
\end{equation*}
where $\mathcal{F}$ denotes the Fourier transform. It is also given by
\begin{equation*}
	(-\Delta)^{s}u(x)=-\frac{1}{2}C(N,s)\int_{\mathbb{R}^N}\frac{u(x+y)+u(x-y)-2u(x)}{|y|^{N+2s}}\,dy,
\end{equation*}
where
\begin{equation*}
	C(N,s)=\bigg(\int_{\mathbb{R}^N}\frac{1-\cos \xi_1}{|\xi|^{N+2s}}\,d\xi\bigg)^{-1},\quad \xi=(\xi_1,\xi_2,\ldots,\xi_N).
\end{equation*}
From \cite{Nezza-Palatucci-Valdinoci2012BSM}, we have
\begin{equation*}
	\|(-\Delta)^{\frac{s}{2}}u\|^2_2=\int_{\mathbb{R}^N}|\xi|^{2s}|\mathcal{F}(u)|^2\,d\xi=\frac{1}{2}C(N,s)
	\int_{\mathbb{R}^N}\int_{\mathbb{R}^N}\frac{|u(x)-u(y)|^2}{|x-y|^{N+2s}}\,dx\,dy.
\end{equation*}
\par
From the viewpoint of the calculus of variations, the fractional Kirchhoff problem \eqref{eq1.1} is much more complex and difficult than the
classical fractional Laplacian equation because of the presence of the term
\[
b\Big(\int_{\mathbb{R}^{N}}\big|(-\Delta)^{\frac{s}{2}}u\big|^2\,dx\Big)(-\Delta )^s u,
\]
which is (formally) of order $4s$ (and of order four when $s=1$).
This fact leads to difficulties in obtaining the boundedness of Palais--Smale sequences for the corresponding energy functional when $p\leq3$.
Recently, R\u{a}dulescu and Yang \cite{R-Yang1} established uniqueness and nondegeneracy for positive solutions to Kirchhoff equations with subcritical growth. More precisely,
they proved that the following fractional Kirchhoff equation
\begin{equation}\label{eq1.5}
	\Big(a+b\int_{\mathbb{R}^{N}}\big|(-\Delta)^{\frac{s}{2}}u\big|^2\,dx\Big)(-\Delta)^s u+u=u^p,\quad \text{in }\mathbb{R}^{N},
\end{equation}
where $a,b>0$, $\frac{N}{4}<s<1$, $1<p<2_s^*-1$, has a unique nondegenerate positive radial solution. One of the main ideas is based on a scaling technique which allows one to find a relation between solutions of \eqref{eq1.5} and the following equation
\begin{equation}\label{eq1.6}
	(-\Delta)^{s} Q+Q=Q^{p} , \quad \text{in } \mathbb{R}^{N},
\end{equation}
where $0<s<1$ and $1<p<2_{s}^{*}-1$. For high dimensions and the critical case we refer to \cite{Gu-Yang1,Yang,Yang-Yu}. We first summarize some results in \cite{R-Yang1} for convenience.

\begin{Pro}\label{Pro1.1}
	Let $a,b>0$. Assume that $\frac{N}{4}<s<1$ and $1<p<2_s^*-1$. Then
	equation \eqref{eq1.5} has a ground state solution $U\in H^s(\mathbb{R}^N)$ which is unique up to translations, and:
	\begin{itemize}
		\item[$(i)$] $U>0$ belongs to $C^\infty(\mathbb{R}^N)\cap H^{2s+1}(\mathbb{R}^N)$;
		\item[$(ii)$] there exists some $x_0\in \mathbb{R}^N$ such that
		$U(\cdot-x_0)$ is radial and strictly decreasing in $r=|x-x_0|$;
		\item[$(iii)$] there exist constants $C_1,C_2>0$ such that
		\begin{equation*}
			\frac{C_1}{1+|x|^{N+2s}}\leq U(x)\leq \frac{C_2}{1+|x|^{N+2s}},\quad \forall\, x\in \mathbb{R}^N.
		\end{equation*}
	\end{itemize}
	Moreover, $U$ is nondegenerate in $H^s(\mathbb{R}^N)$ in the sense that
	\[
	\ker L_+=\operatorname{span}\{\partial_{x_1}U, \partial_{x_2}U,\ldots,  \partial_{x_N}U\},
	\]
	where $L_+$ is defined by
	\begin{equation*}
		L_+\varphi=\Big(a+b\int_{\mathbb{R}^{N}}\big|(-\Delta
		)^{\frac{s}{2}}U\big|^2\,dx\Big)(-\Delta
		)^s\varphi+\varphi-pU^{p-1}\varphi+2b\Big(\int_{\mathbb{R}^{N}}(-\Delta
		)^{\frac{s}{2}}U\,(-\Delta )^{\frac{s}{2}}\varphi\,dx\Big)(-\Delta )^sU
	\end{equation*}
	acting on $L^2(\mathbb{R}^N)$ with domain $H^s(\mathbb{R}^N)$.
\end{Pro}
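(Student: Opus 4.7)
The plan is to reduce equation \eqref{eq1.5} to the local-in-Kirchhoff-coefficient equation \eqref{eq1.6} via a scaling and then transport existence, uniqueness, decay and nondegeneracy from $Q$ to $U$. Concretely, I look for $U$ in the form $U(x)=Q(\beta x)$ with $\beta>0$, where $Q$ is the (classical) unique nondegenerate positive ground state of \eqref{eq1.6}. A direct substitution and change of variables show that $U$ solves \eqref{eq1.5} if and only if the Kirchhoff coefficient cancels the scaling factor in the fractional Laplacian, which leads to the single algebraic constraint
\begin{equation*}
a\beta^{2s}+b\beta^{4s-N}\|(-\Delta)^{s/2}Q\|_2^{2}=1.
\end{equation*}

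Under the hypothesis $N/4<s<1$ one has $2s>0$ and $4s-N>0$, so the left-hand side is strictly increasing in $\beta$, goes from $0$ to $+\infty$, and the constraint determines a unique $\beta>0$. This gives existence and uniqueness (up to translation) of the ground state $U$ of \eqref{eq1.5}. Properties $(i)$, $(ii)$, $(iii)$ then follow immediately by pulling back the analogous properties of $Q$, which are classical for \eqref{eq1.6} (Cabré--Sire / Frank--Lenzmann--Silvestre type results): smoothness, radial monotone decay after translation, and the sharp $|x|^{-(N+2s)}$ decay are all preserved under the dilation $x\mapsto \beta x$.

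For nondegeneracy, I would analyze $L_+$ by pulling back along the same scaling. Writing $\varphi(x)=\psi(\beta x)$ and using $A\beta^{2s}=1$ with $A:=a+b\|(-\Delta)^{s/2}U\|_2^{2}$, a direct computation shows that $L_+\varphi=0$ is equivalent to
\begin{equation*}
L_0\psi + c\,(-\Delta)^s Q=0,\qquad c:=2b\beta^{4s-N}\bigl\langle (-\Delta)^{s/2}Q,(-\Delta)^{s/2}\psi\bigr\rangle,
\end{equation*}
where $L_0\psi=(-\Delta)^s\psi+\psi-pQ^{p-1}\psi$ is the classical linearization at $Q$. Now I exploit the well-known scaling identity $L_0\!\bigl(\tfrac{1}{2s}\,x\!\cdot\!\nabla Q\bigr)=(-\Delta)^s Q$, obtained by differentiating \eqref{eq1.6} with respect to the dilation parameter. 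Combined with the (classical) nondegeneracy of $Q$, i.e.\ $\ker L_0=\operatorname{span}\{\partial_{x_i}Q\}$, this forces
\begin{equation*}
\psi=-\tfrac{c}{2s}\,x\!\cdot\!\nabla Q+\sum_{i=1}^{N}c_i\,\partial_{x_i}Q.
\end{equation*}

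Inserting this representation into the definition of $c$ and using $\langle(-\Delta)^{s/2}Q,(-\Delta)^{s/2}\partial_{x_i}Q\rangle=0$ (odd/even parity) reduces the consistency condition to a single scalar equation for $c$ whose coefficient is, after invoking the Pohozaev identity $\int(-\Delta)^s Q\,(x\!\cdot\!\nabla Q)\,dx=-\tfrac{N-2s}{2}\|(-\Delta)^{s/2}Q\|_2^{2}$ and the defining relation for $\beta$, of the form
\begin{equation*}
1+\tfrac{b\beta^{4s-N}}{s}\!\!\int(-\Delta)^s Q\,(x\!\cdot\!\nabla Q)\,dx=\tfrac{4s-N}{2s}+\tfrac{(N-2s)\,a\beta^{2s}}{2s}.
\end{equation*}
Since $2s<N<4s$, both terms on the right are strictly positive, so $c=0$; hence $\psi\in\operatorname{span}\{\partial_{x_i}Q\}$ and, translating back, $\varphi\in\operatorname{span}\{\partial_{x_i}U\}$. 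The main obstacle I anticipate is this last consistency step: the extra nonlocal term $2b\langle\cdot,\cdot\rangle(-\Delta)^s U$ in $L_+$ makes the Pohozaev-type bookkeeping delicate, and the sign of the key coefficient is precisely what uses the subcritical range $N/4<s<1$; without that dimensional restriction the argument can fail. Everything else is bookkeeping and standard fractional-Laplacian regularity.
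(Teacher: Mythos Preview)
The paper does not prove Proposition~\ref{Pro1.1}; it is quoted from \cite{R-Yang1}, and the paper only remarks that ``the main idea is based on the scaling technique which allows us to find a relation between solutions of \eqref{eq1.5} and \eqref{eq1.6}.'' Your proposal is precisely that scaling technique, so in spirit you are reproducing the cited argument rather than offering an alternative.

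Your execution is essentially correct. The scaling ansatz $U(x)=Q(\beta x)$ together with the monotone constraint $a\beta^{2s}+b\beta^{4s-N}\|(-\Delta)^{s/2}Q\|_2^2=1$ is exactly right; note, however, that to get \emph{uniqueness} you must also argue the converse direction (any positive solution $U$ of \eqref{eq1.5}, with its own Kirchhoff constant $A$, solves $A(-\Delta)^sU+U=U^p$, hence is a rescaled $Q$, hence $\beta$ is forced), which you leave implicit. For the nondegeneracy step your rank-one perturbation analysis via $L_0(\tfrac{1}{2s}x\cdot\nabla Q)=(-\Delta)^sQ$ and the Pohozaev identity is correct, and the consistency coefficient indeed equals $\tfrac{4s-N}{2s}+\tfrac{(N-2s)a\beta^{2s}}{2s}$. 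One small slip: you justify its positivity by ``$2s<N<4s$,'' but the hypothesis of the proposition is only $N<4s$; the argument still goes through because $a\beta^{2s}<1$ (from the constraint) gives $4s-N+(N-2s)a\beta^{2s}>4s-N+(N-2s)=2s>0$ regardless of the sign of $N-2s$.
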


By Proposition \ref{Pro1.1}, it is now possible to apply the Lyapunov--Schmidt
reduction to study the perturbed fractional Kirchhoff equation \eqref{eq1.1}.
We look for multi-peak positive solutions of \eqref{eq1.1} in the Sobolev space
$H^s(\mathbb{R}^N)$ for sufficiently small $\varepsilon>0$, which are usually
referred to as semiclassical solutions. We also refer to such solutions as
concentrating solutions, since they concentrate at certain points of the
potential $V$ as $\varepsilon\to0$.

Assume that $V: \mathbb{R}^{N} \rightarrow \mathbb{R}$ satisfies the following
conditions:
\begin{itemize}
	\item[$(V_1)$] $V \in L^{\infty}\left(\mathbb{R}^{N}\right)$ and
	\[
	0<\inf_{\mathbb{R}^{N}} V \leqslant \sup_{\mathbb{R}^{N}} V<\infty;
	\]
	
	\item[$(V_2)$] There exist $k\geqslant 2$ distinct points
	$\left\{a_{1}, \ldots, a_{k}\right\} \subset \mathbb{R}^{N}$ such that for every
	$1 \leqslant i \leqslant k$ one has $V \in C^{\alpha}\!\left(\overline{B_{r_{0}}(a_i)}\right)$
	for some $\alpha \in\bigl(0,\tfrac{N+4s}{2}\bigr)$, and there exists $r>0$ with
	$0<r<r_{0}:=\frac{1}{2}\min_{1 \leqslant i \neq j \leqslant k}\left|a_{i}-a_{j}\right|$ such that
	\[
	V\left(a_{i}\right)<V(x) \quad \text{for all } x \text{ satisfying }0<\left|x-a_{i}\right|<r;
	\]
	
	\item[$(V_3)$] There exist $m\in\bigl(1,\tfrac{N+4s}{2}\bigr)$, $\delta>0$,
	points $a_{i}=\left(a_{i,1}, a_{i,2},\ldots,a_{i,N}\right) \in \mathbb{R}^{N}$ and
	coefficients $c_{i,j} \in \mathbb{R}$ with $c_{i,j} \neq 0$ for each
	$i=1,\ldots,k$ and $j=1,\ldots,N$ such that $V \in C^{1}\bigl(B_{\delta}(a_{i})\bigr)$ and
	\begin{equation*}
		\begin{cases}
			\displaystyle
			V(x)=V\left(a_i\right)+\sum\limits_{j=1}^{N} c_{i,j}\left|x_{j}-a_{i,j}\right|^{m}
			+O\bigl(\left|x-a_i\right|^{m+1}\bigr),
			& x \in B_{\delta}\left(a_i\right), \\[6pt]
			\displaystyle
			\frac{\partial V}{\partial x_{j}}(x)
			=m c_{i,j}\left|x_{j}-a_{i,j}\right|^{m-2}\left(x_{j}-a_{i,j}\right)
			+O\bigl(\left|x-a_{i}\right|^{m}\bigr),
			& x \in B_{\delta}\left(a_{i}\right),
		\end{cases}
	\end{equation*}
	for $i=1,\ldots,k$ and $j=1,\ldots,N$, where
	$x=\left(x_{1}, x_{2},\ldots, x_{N}\right) \in \mathbb{R}^{N}$.
\end{itemize}

The assumption $(V_1)$ allows us to introduce the inner product
\begin{equation*}
	\langle u, v\rangle_{\varepsilon}
	=\int_{\mathbb{R}^{N}}\left(\varepsilon^{2s} a\, (-\Delta)^{\frac{s}{2}} u
	\cdot(-\Delta)^{\frac{s}{2}} v+V(x)\, u v\right)\,dx
\end{equation*}
for $u, v \in H^{s}\left(\mathbb{R}^{N}\right)$.
We also write
\begin{equation*}
	H_{\varepsilon}
	=\left\{u \in H^{s}\left(\mathbb{R}^{N}\right):\|u\|_{\varepsilon}
	=\langle u, u\rangle_{\varepsilon}^{\frac{1}{2}}<\infty\right\},
\end{equation*}
so that $H_{\varepsilon}=H^{s}(\mathbb{R}^N)$ with equivalent norms.

Now we state our main results as follows.

\begin{Thm}\label{Thm1.1}
	Let $a,b>0$. Assume that $0<s<1$, $2s<N<4s$, $1<p<2_s^*-1$ and that $V$
	satisfies $(V_1)$–$(V_2)$. Then there exists $\varepsilon_{0}>0$ such that for all
	$\varepsilon \in\left(0, \varepsilon_{0}\right)$, problem \eqref{eq1.1} admits a
	$k$-peak positive solution in the sense of Definition \ref{Def1.1}, whose
	peaks concentrate around the points $a_i$, $1\leq i\leq k$, as $\varepsilon\to0$.
\end{Thm}

\begin{Thm}\label{Thm1.2}
	Assume that $0<s<1$, $2s<N<4s$, $1<p<2_s^*-1$, and that $V$ satisfies
	$(V_1)$–$(V_3)$. If $u_{\varepsilon}^{(1)},u_{\varepsilon}^{(2)}$ are two
	$k$-peak solutions concentrating at the set of $k$ distinct points
	$\{a_1,a_2,\ldots,a_k\}$, then
	\begin{equation*}
		u_{\varepsilon}^{(1)} \equiv u_{\varepsilon}^{(2)}
	\end{equation*}
	for all $\varepsilon$ sufficiently small.
	
	Moreover, let
	\begin{equation*}
		u_{\varepsilon}(x)=\sum_{i=1}^{k}U^i\!\left(\frac{x-y^i_{\varepsilon}}{\varepsilon}\right)
		+\varphi_{\varepsilon}(x)
	\end{equation*}
	be this unique solution, where $(U^1,U^2,\ldots,U^k)$ is the positive solution of
	the system \eqref{eq2.5}. Then there exist $0<\tau<1$ sufficiently small such that
	for all $i=1,\ldots,k$ one has
	\begin{equation*}
		\left|y^i_{\varepsilon}-a_i\right|=o(\varepsilon)
		\quad\text{and}\quad
		\left\|\varphi_{\varepsilon}\right\|_{\varepsilon}
		=O\left(\varepsilon^{\frac{N}{2}+m(1-\tau)}\right).
	\end{equation*}
\end{Thm}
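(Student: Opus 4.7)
The plan is to run a contradiction/blow-up argument driven by the local Pohoz\v{a}ev identity, adapted to handle simultaneously the fractional Laplacian and the nonlocal Kirchhoff coupling. Because Theorem \ref{Thm1.2} packages three conclusions (uniqueness plus the refined rates for $y^i_\varepsilon-a_i$ and $\varphi_\varepsilon$), I would first sharpen the location and error estimates for an arbitrary $k$-peak solution, and only then feed them into the uniqueness step. Concretely, writing $u_\varepsilon=\sum_i U^i((x-y^i_\varepsilon)/\varepsilon)+\varphi_\varepsilon$ with $\varphi_\varepsilon$ orthogonal to $\mathrm{span}\{\partial_{x_j}U^i((\cdot-y^i_\varepsilon)/\varepsilon)\}$, I test the equation against $\partial_{x_j}u_\varepsilon$ and integrate over $B_\delta(a_i)$ to produce a fractional local Pohoz\v{a}ev identity. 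Expanding $V$ via assumption $(V_3)$ yields at leading order a term of size $\varepsilon^N c_{i,j}|y^i_{\varepsilon,j}-a_{i,j}|^{m-2}(y^i_{\varepsilon,j}-a_{i,j})$, while the remaining pieces (boundary flux, Kirchhoff cross-term, errors from $\varphi_\varepsilon$) are controlled by the Lyapunov--Schmidt reduction bounds. Since $c_{i,j}\neq 0$, inverting gives $|y^i_\varepsilon-a_i|=o(\varepsilon)$; re-injecting this into the reduction estimate then bootstraps $\|\varphi_\varepsilon\|_\varepsilon=O(\varepsilon^{N/2+m(1-\tau)})$.

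Assuming now that $u^{(1)}_\varepsilon\not\equiv u^{(2)}_\varepsilon$, set $\xi_\varepsilon=(u^{(1)}_\varepsilon-u^{(2)}_\varepsilon)/\|u^{(1)}_\varepsilon-u^{(2)}_\varepsilon\|_{L^\infty}$, so $\|\xi_\varepsilon\|_{L^\infty}=1$. Then $\xi_\varepsilon$ solves a linear equation whose potential interpolates between $p(u^{(1)}_\varepsilon)^{p-1}$ and $p(u^{(2)}_\varepsilon)^{p-1}$, plus a nonlocal Kirchhoff-type term in $\xi_\varepsilon$. Define the rescaled sequence $\bar\xi^i_\varepsilon(x)=\xi_\varepsilon(\varepsilon x+y^i_\varepsilon)$; uniform $L^\infty$ bounds together with the pointwise decay of Proposition \ref{Pro1.1}(iii) give $\bar\xi^i_\varepsilon\to\xi^i$ in $C^1_{\mathrm{loc}}(\mathbb{R}^N)$. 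Passing to the limit and invoking the nondegeneracy in Proposition \ref{Pro1.1} identifies $\xi^i=\sum_{j=1}^{N}b_{i,j}\partial_{x_j}U^i$ for some constants $b_{i,j}\in\mathbb{R}$, while standard decay/comparison arguments force $\xi_\varepsilon\to 0$ uniformly on $\mathbb{R}^N\setminus\cup_i B_{R\varepsilon}(y^i_\varepsilon)$.

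The coefficients $b_{i,j}$ are then killed by another application of the local Pohoz\v{a}ev identity from the first paragraph. I apply it separately to $u^{(1)}_\varepsilon$ and $u^{(2)}_\varepsilon$, subtract, divide by $\|u^{(1)}_\varepsilon-u^{(2)}_\varepsilon\|_{L^\infty}$, and pass to the limit. The sharp location rate $|y^i_\varepsilon-a_i|=o(\varepsilon)$ forces the leading $\varepsilon^{m+N}$ contribution to collapse to a nonzero multiple of $b_{i,j}$ (with coefficient involving $c_{i,j}$ and integrals of $U^i$ and $\partial_{x_j}U^i$), whereas the subleading contributions are absorbed using the improved bound on $\varphi_\varepsilon$. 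Hence $b_{i,j}=0$ for every $i,j$, so $\bar\xi^i_\varepsilon\to 0$ in every fixed ball; combined with the exterior decay this contradicts $\|\xi_\varepsilon\|_{L^\infty}=1$, since by Definition \ref{Def1.1}$(ii)$ the supremum must be attained in some $B_{R\varepsilon}(y^i_\varepsilon)$, completing the uniqueness.

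The hardest part will be controlling the nonlocal Kirchhoff factor inside the local Pohoz\v{a}ev identity. Subtracting the identities for $u^{(1)}_\varepsilon$ and $u^{(2)}_\varepsilon$ generates a global cross term proportional to $\int_{\mathbb{R}^N}(-\Delta)^{s/2}(u^{(1)}_\varepsilon+u^{(2)}_\varepsilon)(-\Delta)^{s/2}\xi_\varepsilon\,dx$ paired with $(-\Delta)^s u^{(2)}_\varepsilon$, and this term is not localised at any single $a_i$. Showing that it is strictly lower order than $\varepsilon^{m+N}$ requires combining the decay $U^i(x)\lesssim(1+|x|)^{-N-2s}$ from Proposition \ref{Pro1.1}(iii) with the $o(\varepsilon)$ location estimate. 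A parallel technical obstacle is writing a clean fractional Pohoz\v{a}ev identity on $B_\delta(a_i)$, either via the Caffarelli--Silvestre extension or via a Ros-Oton--Serra boundary form, in such a way that truncation errors and nonlocal boundary traces contribute only terms that can be absorbed into the remainder.
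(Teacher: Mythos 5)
Your proposal is correct and follows essentially the same route as the paper: Lemma 5.3 uses the local Pohoz\v{a}ev identity (cited as Lemma 2.2) on small balls around each $y^i_\varepsilon$ to get $|y^i_\varepsilon-a_i|=o(\varepsilon)$ and then the improved bound \eqref{eq5.20} on $\|\varphi_\varepsilon\|_\varepsilon$; Lemmas 5.4--5.5 rescale $\xi_\varepsilon$ and invoke the nondegeneracy in Proposition \ref{Pro1.1} to identify the limit as $\sum_i d_i\partial_{x_i}U^{i_0}$; and Lemma 5.6 kills the coefficients $d_i$ by applying the same Pohoz\v{a}ev identity to the difference $u^{(1)}_\varepsilon-u^{(2)}_\varepsilon$, yielding the contradiction. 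The two technical obstacles you flag (the nonlocal Kirchhoff cross-term appearing after subtraction, and the validity of a clean boundary-form fractional Pohoz\v{a}ev identity) are real; the paper handles the first via the orthogonality estimates \eqref{eq5.32}--\eqref{eq5.33} and the decay in Proposition \ref{Pro1.1}(iii), and defers the second entirely to the cited reference \cite{R-Yang2}.
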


\par
To prove Theorem \ref{Thm1.1}, let us first recall that, to construct multi-peak solutions to the Schr\"odinger equation
\begin{equation}\label{eq1.7}
	-\varepsilon^{2} \Delta u+V(x) u=u^{q}, \quad u>0 \quad \text{in } \mathbb{R}^{N},
\end{equation}
it is very important to understand the limiting equation as $\varepsilon \rightarrow 0$, which is known as the unperturbed Schr\"odinger equation
\begin{equation*}
	-\Delta u+V(x) u=u^{q}, \quad u>0 \quad \text{in } \mathbb{R}^{N}.
\end{equation*}
Denote by $Q_{i}$ the unique (see \cite{MR969899}) positive radial solution to the equation
\[
-\Delta Q_{i}+V\left(a_{i}\right) Q_{i}=Q_{i}^{q} \quad \text{in } \mathbb{R}^{N}.
\]
Then, to construct a $k$-peak solution to equation \eqref{eq1.7} concentrating at $\left\{a_{1}, \ldots, a_{k}\right\}$, natural candidates are functions of the form
\[
u_{\varepsilon}=\sum_{i=1}^{k} Q_{i}\left(\frac{x-y^i_{\varepsilon}}{\varepsilon}\right)+\varphi_{\varepsilon},
\]
where $y^i_{\varepsilon} \rightarrow a_{i}$ and $\varphi_{\varepsilon}$ is chosen appropriately so that $u_{\varepsilon}$ is indeed a solution to equation \eqref{eq1.7}.
\par
It seems that the above idea should also work for problem \eqref{eq1.1}, with the unperturbed Kirchhoff equation \eqref{eq1.5} as the limiting equation. Indeed, to construct single-peak solutions to problem \eqref{eq1.1}, this idea works, as can be seen in R\u{a}dulescu and Yang \cite{R-Yang1}. However, for multi-peak solutions it turns out to be false: there are no multi-peak solutions of the form
\[
u_{\varepsilon}=\sum_{i=1}^{k} w^{i}\left(\frac{x-y_{\varepsilon}^{i}}{\varepsilon}\right)+\varphi_{\varepsilon},
\]
where $w^i$ is the unique positive solution to
\begin{equation*}
	\Big(a+b\int_{\mathbb{R}^{N}}\big|(-\Delta)^{\frac{s}{2}}u\big|^2\,dx\Big)(-\Delta)^s u+V(a_i)u=u^p,\quad \text{in }\mathbb{R}^{N}.
\end{equation*}
\par
To overcome this difficulty, we borrow some ideas from \cite{MR3988638} for the Kirchhoff equation. By the definition of multi-peak solutions of problem \eqref{eq1.1}, we first prove that if $u_{\varepsilon}$ is a $k$-peak solution to \eqref{eq1.1}, then $u_{\varepsilon}$ must be of a particular form, and $a_{i}$ must be critical points of $V$ if $V$ is continuously differentiable in a neighbourhood of $a_{i}$. In fact, via this step, we prove that the correct limiting equation of problem \eqref{eq1.1} is a system of partial differential equations; see Section 2 for details. This reveals a new phenomenon for multi-peak solutions of singular perturbation problems, which is quite different from the known results on singularly perturbed elliptic equations.
\par
To prove the local uniqueness, we will follow the idea of \cite{MR3426103}. More precisely, if $u_{\varepsilon}^{(1)},u_{\varepsilon}^{(2)}$ are two distinct solutions, then the function
\begin{equation*}
	\xi_{\varepsilon}=\frac{u_{\varepsilon}^{(1)}-u_{\varepsilon}^{(2)} }{\left\|u_{\varepsilon}^{(1)}-u_{\varepsilon}^{(2)}\right\|_{L^{\infty}\left(\mathbb{R}^{N}\right)}}
\end{equation*}
satisfies $\left\|\xi_{\varepsilon}\right\|_{L^{\infty}\left(\mathbb{R}^{N}\right)}=1$. We will show, by using the equation satisfied by $\xi_{\varepsilon}$, that $\left\|\xi_{\varepsilon}\right\|_{L^{\infty}\left(\mathbb{R}^{N}\right)} \rightarrow 0$ as $\varepsilon \rightarrow 0$. This gives a contradiction, and thus yields uniqueness. To derive the contradiction, we need quite delicate estimates on the asymptotic behavior of solutions and of the concentrating points $y^i_{\varepsilon}$, due to the presence of the nonlocal term $\big(\int_{\mathbb{R}^{N}}|(-\Delta)^{\frac{s}{2}} u|^{2}\,dx\big)(-\Delta)^s u$ and of the fractional operator.
\par
This paper is organized as follows. We derive the form and location of multi-peak solutions to equation \eqref{eq1.1} in Section 2. In Section 3, we present some basic results which will be used later and explain the strategy of the proof of Theorem \ref{Thm1.1}. Finally, we complete the proof of Theorem \ref{Thm1.1} in Section 4 and of Theorem \ref{Thm1.2} in Section 5.
\par
\vspace{3mm}
{\bf Notation.~}Throughout this paper, we make use of the following notations.
\begin{itemize}
	\item[$\bullet$]  For any $R>0$ and any $x\in \mathbb{R}^N$, $B_R(x)$ denotes the ball of radius $R$ centered at $x$;
	\item[$\bullet$]  $\|\cdot\|_q$ denotes the usual norm of the space $L^q(\mathbb{R}^N)$, $1\leq q\leq\infty$;
	\item[$\bullet$]  $o_n(1)$ denotes a quantity such that $o_n(1)\rightarrow 0$ as $n\rightarrow\infty$;
	\item[$\bullet$]  $C$ or $C_i$ ($i=1,2,\ldots$) denote positive constants which may change from line to line.
\end{itemize}

\section{The form and locations of multi-peak solutions}

In this section, we first fix the form of multi-peak solutions of equation \eqref{eq1.1} and locate the related concentrating points. In particular, we prove that if equation \eqref{eq1.1} has a concentrating solution, then $V$ must have at least one critical point. We will use the following inequality repeatedly.

\begin{Lem}\label{Lem2.1}
	For any $2 \leq q \leq 2^*_s$, there exists a constant $C>0$ depending only on $N,s,a,q$ and on the bounds of $V$, but independent of $\varepsilon$, such that
	\begin{equation}\label{eq2.1}
		\|\varphi\|_{L^{q}\left(\mathbb{R}^{N}\right)} \leq C \,\varepsilon^{\frac{N}{q}-\frac{N}{2}}\|\varphi\|_{\varepsilon}
	\end{equation}
	holds for all $\varphi \in H_{\varepsilon}$.
\end{Lem}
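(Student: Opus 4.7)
The plan is to reduce the weighted estimate to the standard fractional Sobolev embedding $H^s(\mathbb{R}^N) \hookrightarrow L^q(\mathbb{R}^N)$ (valid for $2 \le q \le 2_s^*$) by a scaling change of variables. Given $\varphi \in H_\varepsilon$, I set $\tilde\varphi(y) = \varphi(\varepsilon y)$ so that $y = x/\varepsilon$; the point is that the $\varepsilon$-scaled norm $\|\cdot\|_\varepsilon$ is, up to a constant, the unscaled $H^s$-norm of $\tilde\varphi$ times $\varepsilon^{N/2}$.

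The first step is to compute how each piece transforms. A direct change of variables gives $\|\varphi\|_{L^q}^q = \varepsilon^N \|\tilde\varphi\|_{L^q}^q$, so $\|\varphi\|_{L^q} = \varepsilon^{N/q}\|\tilde\varphi\|_{L^q}$. For the Gagliardo seminorm, one uses the scaling identity $[(-\Delta)^{s/2}\varphi](x) = \varepsilon^{-s}[(-\Delta)^{s/2}\tilde\varphi](x/\varepsilon)$, or equivalently the double-integral formula from \cite{Nezza-Palatucci-Valdinoci2012BSM}, to get
\begin{equation*}
\int_{\mathbb{R}^N} |(-\Delta)^{s/2}\varphi|^2\,dx = \varepsilon^{N-2s} \int_{\mathbb{R}^N} |(-\Delta)^{s/2}\tilde\varphi|^2\,dy.
\end{equation*}
Combining these with $\int V(x)\varphi^2\,dx = \varepsilon^N \int V(\varepsilon y)\tilde\varphi^2\,dy$ yields
\begin{equation*}
\|\varphi\|_\varepsilon^2 = \varepsilon^N \left( a\int_{\mathbb{R}^N} |(-\Delta)^{s/2}\tilde\varphi|^2\,dy + \int_{\mathbb{R}^N} V(\varepsilon y)\tilde\varphi^2\,dy \right),
\end{equation*}
so that the $\varepsilon^{2s}$ prefactor conveniently cancels the $\varepsilon^{-2s}$ coming from the gradient scaling.

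The second step is to compare the right-hand side above with the ordinary $H^s$-norm of $\tilde\varphi$. By hypothesis $(V_1)$, one has $0 < v_0 := \inf V \le V(\varepsilon y) \le \sup V =: v_\infty < \infty$ uniformly in $\varepsilon$ and $y$, and $a > 0$; therefore there is a constant $c = c(a, v_0, v_\infty) > 0$ such that
\begin{equation*}
\|\tilde\varphi\|_{H^s(\mathbb{R}^N)}^2 \le c\, \varepsilon^{-N} \|\varphi\|_\varepsilon^2.
\end{equation*}
Applying the classical fractional Sobolev embedding $\|\tilde\varphi\|_{L^q} \le C_q \|\tilde\varphi\|_{H^s}$, which is valid for all $2 \le q \le 2_s^*$ (with the endpoint requiring $s < N/2$), and plugging back into $\|\varphi\|_{L^q} = \varepsilon^{N/q}\|\tilde\varphi\|_{L^q}$ gives
\begin{equation*}
\|\varphi\|_{L^q(\mathbb{R}^N)} \le C\, \varepsilon^{N/q}\, \varepsilon^{-N/2}\, \|\varphi\|_\varepsilon = C\, \varepsilon^{N/q - N/2}\|\varphi\|_\varepsilon,
\end{equation*}
which is exactly \eqref{eq2.1}. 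There is no genuine obstacle here; the only points to watch are keeping track of the $\varepsilon$-powers in the $(-\Delta)^{s/2}$ scaling and verifying that the constant in the Sobolev embedding step depends only on $N, a, V, q$, which it does through $a$, $v_0$, $v_\infty$, and the embedding constant $C_q$.
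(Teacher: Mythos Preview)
Your proof is correct and follows essentially the same approach as the paper: rescale via $\tilde\varphi(y)=\varphi(\varepsilon y)$, apply the standard fractional Sobolev embedding $H^s\hookrightarrow L^q$, and use $(V_1)$ to compare the rescaled norm with $\|\cdot\|_\varepsilon$. The paper's version is slightly more compressed but the argument is identical in substance.
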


\begin{proof}
	Set $\tilde{\varphi}(x)=\varphi(\varepsilon x)$. Then by a change of variables,
	\[
	\int_{\mathbb{R}^{N}}|\varphi|^{q}\,dx =\varepsilon^{N} \int_{\mathbb{R}^{N}}|\tilde{\varphi}|^{q}\,dx.
	\]
	By the fractional Sobolev embedding $H^{s}\left(\mathbb{R}^{N}\right) \subset L^{q}\left(\mathbb{R}^{N}\right)$, we have
	\[
	\int_{\mathbb{R}^{N}}|\tilde{\varphi}|^{q}\,dx \leq C_{1} \left(\int_{\mathbb{R}^{N}}\big(|(-\Delta)^{\frac{s}{2}} \tilde{\varphi}|^{2}+|\tilde{\varphi}|^{2}\big)\,dx\right)^{q / 2},
	\]
	where $C_{1}>0$ is the best constant for this embedding. Using the scaling property of the fractional Laplacian,
	\[
	(-\Delta)^{\frac{s}{2}}\tilde{\varphi}(x)=\varepsilon^{s}\big((-\Delta)^{\frac{s}{2}}\varphi\big)(\varepsilon x),
	\]
	we obtain
	\[
	\int_{\mathbb{R}^{N}}\big(|(-\Delta)^{\frac{s}{2}} \tilde{\varphi}|^{2}+|\tilde{\varphi}|^{2}\big)\,dx
	= \varepsilon^{2s-N}\int_{\mathbb{R}^{N}}|(-\Delta)^{\frac{s}{2}}\varphi|^{2}\,dx
	+ \varepsilon^{-N}\int_{\mathbb{R}^{N}}|\varphi|^{2}\,dx
	= \varepsilon^{-N}\int_{\mathbb{R}^{N}}\big(\varepsilon^{2s}|(-\Delta)^{\frac{s}{2}}\varphi|^{2}+|\varphi|^{2}\big)\,dx.
	\]
	Hence
	\[
	\int_{\mathbb{R}^{N}}|\varphi|^{q}\,dx
	\leq C_1\,\varepsilon^{N-\frac{Nq}{2}}
	\left(\int_{\mathbb{R}^{N}}\big(\varepsilon^{2s}|(-\Delta)^{\frac{s}{2}}\varphi|^{2}+|\varphi|^{2}\big)\,dx\right)^{q/2}.
	\]
	By $(V_1)$, the norms $\|\cdot\|_{\varepsilon}$ and
	\[
	\Big(\int_{\mathbb{R}^{N}}\big(\varepsilon^{2s}|(-\Delta)^{\frac{s}{2}}\varphi|^{2}+|\varphi|^{2}\big)\,dx\Big)^{1/2}
	\]
	are equivalent on $H^{s}(\mathbb{R}^{N})$, with equivalence constants independent of $\varepsilon$. Therefore there exists $C_2>0$, depending only on $N,s,a,q$ and on the bounds of $V$, such that
	\[
	\int_{\mathbb{R}^{N}}|\varphi|^{q}\,dx \le C_2\,\varepsilon^{N-\frac{Nq}{2}}\|\varphi\|_{\varepsilon}^{q},
	\]
	which yields \eqref{eq2.1} after taking the $q$-th root.
\end{proof}
For convenience, we introduce the notation
\begin{equation*}
	u_{\varepsilon, y}(x)=u\left(\frac{x-y}{\varepsilon}\right)
\end{equation*}
for $\varepsilon>0$ and $y \in \mathbb{R}^{N}$.
\par
Denote by $u^{(i)} \in H^{s}(\mathbb{R}^{N})$ the unique positive radial solution (see \cite[Theorem 1.1]{R-Yang1}) to the equation
\begin{equation*}
	\Big(a+b{\int_{\mathbb{R}^{N}}}\big|(-\Delta)^{\frac{s}{2}}u\big|^2\,dx\Big)(-\Delta)^s u+V(a_i)u=u^p,\quad \text{in }\mathbb{R}^{N}.
\end{equation*}
Then, for each $i=1, \ldots, k$,  $u_{\varepsilon, y_{\varepsilon}^{i}}^{(i)}(x)=u^{(i)}\!\left(\frac{x-y_{\varepsilon}^{i}}{ \varepsilon}\right)>0$ satisfies
\begin{equation}\label{eq2.2}
	\Big(\varepsilon^{2s} a+\varepsilon^{4s-N} b \int_{\mathbb{R}^{N}}\big|(-\Delta)^{\frac{s}{2}}u_{\varepsilon, y_{\varepsilon}^{i}}^{(i)}\big|^{2}\,dx\Big) (-\Delta)^s u_{\varepsilon, y_{\varepsilon}^{i}}^{(i)}+V\left(a_{i}\right) u_{\varepsilon, y_{\varepsilon}^{i}}^{(i)}=\big(u_{\varepsilon, y_{\varepsilon}^{i}}^{(i)}\big)^{p} \quad \text{in }\mathbb{R}^{N}.
\end{equation}
\par
As mentioned in the introduction, we have our first result.

\begin{Thm}\label{Thm2.1}
	Let $u_{\varepsilon}$ be a $k$-peak solution of equation \eqref{eq1.1} in the sense of Definition \ref{Def1.1}, with local maximum points at $y_{\varepsilon}^{i}$ and $y_{\varepsilon}^{i} \rightarrow a_{i}$ as $\varepsilon\rightarrow0$. Then, for $\varepsilon>0$ sufficiently small, $u_{\varepsilon}$ cannot be represented in the form
	\begin{equation}\label{eq2.3}
		u_{\varepsilon}(x)=\sum_{i=1}^{k} u_{\varepsilon, y_{\varepsilon}^{i}}^{(i)}(x)+\varphi_{\varepsilon}(x)
	\end{equation}
	with a remainder $\varphi_{\varepsilon}$ satisfying
	\begin{equation}\label{eq2.6}
		\|\varphi_{\varepsilon}\|_{\varepsilon}=o\big(\varepsilon^{\frac{N}{2}}\big).
	\end{equation}
	In fact, $u_{\varepsilon}$ must be of the form
	\begin{equation}\label{eq2.4}
		u_{\varepsilon}(x)=\sum_{i=1}^{k} U^{i}\left(\frac{x-y_{\varepsilon}^{i}}{\varepsilon}\right)+\varphi_{\varepsilon}(x)
	\end{equation}
	satisfying
	\begin{itemize}
		\item [$(i)$] $\left(U^{1}, \ldots, U^{k}\right)$ is the unique vector of positive radial solutions to the system
		\begin{equation}\label{eq2.5}
			\left\{\begin{array}{l}
				\Big(a+b \displaystyle\sum\limits_{i=1}^{k} \int_{\mathbb{R}^N}\big|(-\Delta)^{\frac{s}{2}} U^{i}\big|^{2}\,dx\Big)(-\Delta)^s U^{i}+V\left(a_{i}\right) U^{i}=\big(U^{i}\big)^{p} \quad \text{in }\mathbb{R}^{N}, \\[4pt]
				U^{i}>0 \quad \text{in }\mathbb{R}^{N}, \\
				U^{i} \in H^{s}(\mathbb{R}^{N}),
			\end{array}\right.
		\end{equation}
		\item [$(ii)$] the smallness condition \eqref{eq2.6} holds, that is,
		\[
		\|\varphi_{\varepsilon}\|_{\varepsilon}=o\big(\varepsilon^{\frac{N}{2}}\big).
		\]
	\end{itemize}
\end{Thm}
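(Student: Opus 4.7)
The plan is to rescale the equation around each of the concentration points $y_\varepsilon^i$ and then pass to the limit $\varepsilon\to 0$. For each $i\in\{1,\dots,k\}$ set $\tilde u_\varepsilon^i(y):=u_\varepsilon(\varepsilon y+y_\varepsilon^i)$. Under the change of variables $x=\varepsilon y+y_\varepsilon^i$ one has $(-\Delta)^s u_\varepsilon(x)=\varepsilon^{-2s}(-\Delta)^s\tilde u_\varepsilon^i(y)$ and $\int_{\mathbb R^N}|(-\Delta)^{s/2}u_\varepsilon|^2\,dx=\varepsilon^{N-2s}\int_{\mathbb R^N}|(-\Delta)^{s/2}\tilde u_\varepsilon^i|^2\,dy$, so the prefactors $\varepsilon^{2s}$ and $\varepsilon^{4s-N}$ in \eqref{eq1.1} cancel exactly and the rescaled profile satisfies
$$
\Big(a+b\int_{\mathbb R^N}|(-\Delta)^{s/2}\tilde u_\varepsilon^i|^2\,dy\Big)(-\Delta)^s\tilde u_\varepsilon^i+V(\varepsilon y+y_\varepsilon^i)\tilde u_\varepsilon^i=(\tilde u_\varepsilon^i)^p.
$$
The energy bound (iii) of Definition \ref{Def1.1} together with Lemma \ref{Lem2.1} yields $\|\tilde u_\varepsilon^i\|_{H^s(\mathbb R^N)}\le C$ uniformly in $\varepsilon$, so up to a subsequence $\tilde u_\varepsilon^i\rightharpoonup U^i$ in $H^s(\mathbb R^N)$ and strongly in $L^q_{\mathrm{loc}}$ for $2\le q<2^*_s$.

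The crucial observation is that the single global Kirchhoff integral sees every peak simultaneously. Since $y_\varepsilon^j\to a_j$ with $a_i\ne a_j$ for $i\ne j$, the rescaled centers $(y_\varepsilon^j-y_\varepsilon^i)/\varepsilon\to\infty$ for $j\ne i$; combining this with the polynomial decay $U^j(x)\le C/(1+|x|^{N+2s})$ from Proposition \ref{Pro1.1}(iii) and the smallness property (ii) in Definition \ref{Def1.1}, the $k$ bumps are asymptotically orthogonal in the homogeneous seminorm, and
$$
\int_{\mathbb R^N}|(-\Delta)^{s/2}u_\varepsilon|^2\,dx=\varepsilon^{N-2s}\sum_{j=1}^{k}\int_{\mathbb R^N}|(-\Delta)^{s/2}U^j|^2\,dy+o(\varepsilon^{N-2s}).
$$
Substituting this into the rescaled equation and sending $\varepsilon\to 0$, the Kirchhoff coefficient seen around the $i$-th peak tends to $a+b\sum_{j=1}^k\int|(-\Delta)^{s/2}U^j|^2\,dy$ (the same for every $i$), the potential converges to $V(a_i)$, and we conclude that $U^i$ satisfies the $i$-th equation of \eqref{eq2.5}. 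The limit is nontrivial (hence positive) because $\tilde u_\varepsilon^i$ attains a local maximum at the origin whose value stays bounded away from $0$; otherwise (ii) would force $u_\varepsilon$ to be uniformly small, contradicting the $k$-peak hypothesis. Applying Proposition \ref{Pro1.1} componentwise with the common nonlocal coefficient $a+b\sum_j\int|(-\Delta)^{s/2}U^j|^2$, determined self-consistently, delivers uniqueness of $(U^1,\dots,U^k)$ up to translation (which is pinned down by centering each $U^i$ at the origin).

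To upgrade to strong convergence and establish \eqref{eq2.6}, I would set $\varphi_\varepsilon:=u_\varepsilon-\sum_{i=1}^{k}U^i((\,\cdot\,-y_\varepsilon^i)/\varepsilon)$, test the equation \eqref{eq1.1} satisfied by $u_\varepsilon$ against $\varphi_\varepsilon$ in the inner product $\langle\cdot,\cdot\rangle_\varepsilon$ and subtract the corresponding tests of \eqref{eq2.5} against the rescaled approximation. The quadratic part yields $\|\varphi_\varepsilon\|_\varepsilon^2$; the remaining terms split into the potential mismatch $V(x)-V(a_i)$ localized near each $y_\varepsilon^i$ (handled by continuity of $V$), the cross terms between distinct bumps (controlled by the decay estimate of Proposition \ref{Pro1.1}(iii) and the separation of centers), the nonlinear remainder $(\sum U^i)^p-\sum(U^i)^p$ (controlled by Lemma \ref{Lem2.1}), and the Kirchhoff cross contribution (already handled in the previous step). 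A standard Brezis–Lieb type computation then gives norm convergence $\|\tilde u_\varepsilon^i\|_{H^s}\to\|U^i\|_{H^s}$ and thus strong convergence in $H^s$; undoing the scaling turns this into $\|\varphi_\varepsilon\|_\varepsilon=o(\varepsilon^{N/2})$.

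The main obstacle I anticipate is the asymptotic orthogonality of the $k$ bumps in the nonlocal Gagliardo seminorm. Because $(-\Delta)^{s/2}$ is nonlocal and the ground-state profiles decay only polynomially, ordinary cut-off arguments lose control of long-range interactions; one has to combine the explicit $|x|^{-(N+2s)}$ decay from Proposition \ref{Pro1.1}(iii) with the kernel representation of $(-\Delta)^{s/2}$ to bound cross integrals between well-separated bumps by negative powers of $|y_\varepsilon^i-y_\varepsilon^j|/\varepsilon$. This is also what rules out the ansatz \eqref{eq2.3}: running the same limit procedure under \eqref{eq2.3} would force $u^{(i)}=U^i$, but $u^{(i)}$ solves \eqref{eq2.2} whose Kirchhoff coefficient $a+b\int|(-\Delta)^{s/2}u^{(i)}|^2$ is strictly smaller (for $k\ge 2$) than $a+b\sum_{j=1}^k\int|(-\Delta)^{s/2}U^j|^2$, contradicting the uniqueness in Proposition \ref{Pro1.1} applied to the scalar equation with prescribed coefficient.
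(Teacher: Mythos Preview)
Your overall strategy---rescale around each peak, pass to the limit, identify the coupled system \eqref{eq2.5}, then run concentration compactness to get the decomposition and \eqref{eq2.6}---is the same as the paper's. There is, however, a logical circularity in the order you present it. You write
\[
\int_{\mathbb R^N}\bigl|(-\Delta)^{s/2}u_\varepsilon\bigr|^2\,dx=\varepsilon^{N-2s}\sum_{j=1}^k\int_{\mathbb R^N}\bigl|(-\Delta)^{s/2}U^j\bigr|^2\,dy+o(\varepsilon^{N-2s})
\]
\emph{before} establishing the decomposition \eqref{eq2.4} with remainder bound \eqref{eq2.6}; but this identity is precisely what \eqref{eq2.4}--\eqref{eq2.6} encode, so you are using the conclusion to derive the limiting equation. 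The paper avoids this by first extracting (along a subsequence) an abstract constant $A=\lim\bigl(a+b\int|(-\Delta)^{s/2}\bar u_\varepsilon|^2\bigr)$, identifying each weak limit $U^i$ as the unique positive radial solution of $A(-\Delta)^sU^i+V(a_i)U^i=(U^i)^p$, running concentration compactness to obtain \eqref{eq2.4} with \eqref{eq2.6}, and \emph{only then} computing $A=a+b\sum_j\int|(-\Delta)^{s/2}U^j|^2$ via the asymptotic orthogonality you describe. Reordering your argument this way removes the gap.

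For the negative part (ruling out \eqref{eq2.3}) your route differs from the paper's. The paper assumes \eqref{eq2.3} with $\|\varphi_\varepsilon\|_\varepsilon=o(\varepsilon^{N/2})$, tests the equation against a single bump $u^{(j)}_{\varepsilon,y_\varepsilon^j}$, and shows directly that the ``excess'' Kirchhoff contribution $\varepsilon^{2s}bK_i\int|(-\Delta)^{s/2}u^{(i)}|^2$ (with $K_i=\sum_{j\ne i}\int|(-\Delta)^{s/2}u^{(j)}|^2>0$) survives at order $\varepsilon^N$ while every other term is $o(\varepsilon^N)$. Your argument is the conceptual version of the same idea: under \eqref{eq2.3} the rescaled limit around $y_\varepsilon^i$ is $u^{(i)}$, which would have to solve the Schr\"odinger equation with the global coefficient $a+b\sum_j\int|(-\Delta)^{s/2}u^{(j)}|^2$ rather than its own coefficient $a+b\int|(-\Delta)^{s/2}u^{(i)}|^2$, and these differ for $k\ge 2$. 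Both arguments are valid; yours is shorter but relies on the same asymptotic-orthogonality input, so be sure to invoke it for the $u^{(j)}$ (not the $U^j$) in that step.
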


\begin{proof}
	We first prove the negative part. It follows from Proposition \ref{Pro1.1} that
	\begin{equation*}
		u^{(i)}(x)+\big|(-\Delta)^{\frac{s}{2}} u^{(i)}(x)\big| \leq \frac{C}{1+|x|^{N+2s}}, \quad x \in \mathbb{R}^{N}
	\end{equation*}
	for some $C>0$ and for all $i=1,\ldots,k$. Note that
	\[
	\frac{y_{\varepsilon}^{i}-y_{\varepsilon}^{j}}{\varepsilon} \rightarrow \infty \quad\text{as }\varepsilon\to0
	\]
	whenever $i\neq j$, since we assume $a_{i} \neq a_{j}$. A standard change of variables argument then shows that, for $i\neq j$,
	\begin{equation}\label{eq2.7}
		\int_{\mathbb{R}^N}\Big(\varepsilon^{2s}\big|(-\Delta)^{\frac{s}{2}} u_{\varepsilon, y_{\varepsilon}^{i}}^{(i)} \cdot (-\Delta)^{\frac{s}{2}} u_{\varepsilon, y_{\varepsilon}^{j}}^{(j)}\big|+u_{\varepsilon, y_{\varepsilon}^{i}}^{(i)} u_{\varepsilon, y_{\varepsilon}^{j}}^{(j)}\Big)\,dx=o\big(\varepsilon^{N}\big).
	\end{equation}
	Indeed, by setting $x=y_{\varepsilon}^{j}+\varepsilon z$ and using the decay estimates for $u^{(i)}$ and $(-\Delta)^{\frac{s}{2}}u^{(i)}$, one has
	\[
	\int_{\mathbb{R}^N}u_{\varepsilon, y_{\varepsilon}^{i}}^{(i)} u_{\varepsilon, y_{\varepsilon}^{j}}^{(j)}\,dx
	=\varepsilon^{N}\!\int_{\mathbb{R}^N}u^{(i)}\big(z+\tfrac{y_{\varepsilon}^{j}-y_{\varepsilon}^{i}}{\varepsilon}\big)u^{(j)}(z)\,dz
	=o(\varepsilon^N),
	\]
	and similarly for the term involving $(-\Delta)^{\frac{s}{2}} u_{\varepsilon, y_{\varepsilon}^{i}}^{(i)}$.
	
	If we write
	\begin{equation*}
		\mathcal{E}^{2s}=\varepsilon^{2s} a+\varepsilon^{4s-N} b \int_{\mathbb{R}^N}\big|(-\Delta)^{\frac{s}{2}} u_{\varepsilon}\big|^{2}\,dx,
	\end{equation*}
	then using the decomposition
	\[
	u_\varepsilon=\sum_{i=1}^k u_{\varepsilon,y_\varepsilon^i}^{(i)}+\varphi_\varepsilon
	\]
	and the orthogonality estimate \eqref{eq2.7}, together with the smallness assumption \eqref{eq2.6}, we obtain
	\begin{equation}\label{eq2.8}
		a \varepsilon^{2s} \leqslant \mathcal{E}^{2s}
		=\varepsilon^{2s}\left(a+b \sum_{i=1}^{k} \int_{\mathbb{R}^N}\big|(-\Delta)^{\frac{s}{2}} u^{(i)}\big|^{2} \,dx+o_{\varepsilon}(1)\right)
		\leqslant A \varepsilon^{2s}
	\end{equation}
	for some constant $A>a>0$, where $o_{\varepsilon}(1) \rightarrow 0$ as $\varepsilon \rightarrow 0$. The upper bound also follows directly from Definition \ref{Def1.1}(iii), since
	\[
	\varepsilon^{2s}a\int_{\mathbb{R}^N}\big|(-\Delta)^{\frac{s}{2}} u_{\varepsilon}\big|^{2}\,dx
	\le C\varepsilon^N.
	\]
	
	Assume by contradiction that there exists a solution $u_{\varepsilon}$ to equation \eqref{eq1.1} of the form \eqref{eq2.3}, with $\varphi_{\varepsilon}$ satisfying \eqref{eq2.6}. Then we can write
	\begin{equation}\label{eq2.9}
		\begin{aligned}
			\left(\sum_{i=1}^{k} u_{\varepsilon, y_{\varepsilon}^{i}}^{(i)}+\varphi_{\varepsilon}\right)^{p}&=
			\Big(\varepsilon^{2s}a+\varepsilon^{4s-N}b{\int_{\mathbb{R}^{N}}}\big|(-\Delta)^{\frac{s}{2}}u_{\varepsilon}\big|^2\,dx\Big)(-\Delta)^s u_{\varepsilon}+V(x)u_{\varepsilon}\\
			&=
			\sum_{i=1}^{k}\Big(\mathcal{E}^{2s} (-\Delta)^s u_{\varepsilon, y_{\varepsilon}^{i}}^{(i)}+V(x) u_{\varepsilon, y_{\varepsilon}^{i}}^{(i)}\Big)
			+\Big(\mathcal{E}^{2s}(- \Delta)^s \varphi_{\varepsilon}+V(x) \varphi_{\varepsilon}\Big).
		\end{aligned}
	\end{equation}
	Combining \eqref{eq2.2} and \eqref{eq2.9} we have
	\begin{equation}\label{eq2.10}
		\begin{aligned}
			\left(\sum_{i=1}^{k} u_{\varepsilon, y_{\varepsilon}^{i}}^{(i)}+\varphi_{\varepsilon}\right)^{p}-\sum_{i=1}^{k}\big(u_{\varepsilon, y_{\varepsilon}^{i}}^{(i)}\big)^{p}&
			=\sum_{i=1}^{k}\big(V(x)-V(a_{i})\big) u_{\varepsilon, y_{\varepsilon}^{i}}^{(i)}+\Big(\mathcal{E}^{2s}(- \Delta)^s \varphi_{\varepsilon}+V(x) \varphi_{\varepsilon}\Big)\\
			&\quad+\sum_{i=1}^{k}\Big(\mathcal{E}^{2s}-\big(\varepsilon^{2s} a+\varepsilon^{4s-N} b \int_{\mathbb{R}^N}\big|(-\Delta)^{\frac{s}{2}} u_{\varepsilon, y_{\varepsilon}^{i}}^{(i)}\big|^{2}\,dx\big)\Big)(- \Delta)^s u_{\varepsilon, y_{\varepsilon}^{i}}^{(i)}\\
			&=\sum_{i=1}^{k}\big(V(x)-V(a_{i})\big) u_{\varepsilon, y_{\varepsilon}^{i}}^{(i)}+\Big(\mathcal{E}^{2s}(- \Delta)^s \varphi_{\varepsilon}+V(x) \varphi_{\varepsilon}\Big)\\
			&\quad+\sum_{i=1}^{k}\Big(\mathcal{E}^{2s}-\big(\varepsilon^{2s} a+\varepsilon^{2s} b \int_{\mathbb{R}^N}\big|(-\Delta)^{\frac{s}{2}} u^{(i)}\big|^{2}\,dx\big)\Big)(- \Delta)^s u_{\varepsilon, y_{\varepsilon}^{i}}^{(i)}.
		\end{aligned}
	\end{equation}
	Let
	\begin{equation*}
		K_{i}=\sum_{j \neq i}\int_{\mathbb{R}^N}\big|(-\Delta)^{\frac{s}{2}} u^{(j)}\big|^{2} \,dx>0.
	\end{equation*}
	By \eqref{eq2.8}, the last term of \eqref{eq2.10} can be rewritten as
	\begin{equation*}
		\varepsilon^{2s} \sum_{i=1}^{k}\big(b K_{i}+o_{\varepsilon}(1)\big) (-\Delta)^s u_{\varepsilon, y_{\varepsilon}^{i}}^{(i)}.
	\end{equation*}
	Now fix $j\in\{1,\ldots,k\}$. Multiply both sides of \eqref{eq2.10} by $u_{\varepsilon, y_{\varepsilon}^{j}}^{(j)}$ and integrate over $\mathbb{R}^{N}$. Using the fractional integration by parts formula, we obtain
	\begin{equation}\label{eq2.11}
		\begin{aligned}
			&\sum_{i=1}^{k} \varepsilon^{2s}\big(b K_{i}+o_{\varepsilon}(1)\big)  \int_{\mathbb{R}^N} (-\Delta)^{\frac{s}{2}} u_{\varepsilon, y_{\varepsilon}^{i}}^{(i)} \cdot (-\Delta)^{\frac{s}{2}} u_{\varepsilon, y_{\varepsilon}^{j}}^{(j)}\,dx\\
			&=- \int_{\mathbb{R}^N} \sum_{i=1}^{k}\big(V(x)-V(a_{i})\big) u_{\varepsilon, y_{\varepsilon}^{i}}^{(i)} u_{\varepsilon, y_{\varepsilon}^{j}}^{(j)}\,dx
			- \int_{\mathbb{R}^N}\Big(\mathcal{E}^{2s} (-\Delta)^s \varphi_{\varepsilon}+V(x) \varphi_{\varepsilon}\Big) u_{\varepsilon, y_{\varepsilon}^{j}}^{(j)}\,dx \\
			&\quad+ \int_{\mathbb{R}^N}\Bigg(\Big(\sum_{i=1}^{k} u_{\varepsilon, y_{\varepsilon}^{i}}^{(i)}+\varphi_{\varepsilon}\Big)^{p}-\sum_{i=1}^{k}\big(u_{\varepsilon, y_{\varepsilon}^{i}}^{(i)}\big)^{p}\Bigg) u_{\varepsilon, y_{\varepsilon}^{j}}^{(j)}\,dx \\
			&=:-T_{1}-T_{2}+T_{3}.
		\end{aligned}
	\end{equation}
	For $i\neq j$, \eqref{eq2.7} implies that the corresponding terms in the sum on the left-hand side are $o(\varepsilon^N)$. For $i=j$, using the scaling properties we have
	\[
	\int_{\mathbb{R}^N}\big|(-\Delta)^{\frac{s}{2}} u_{\varepsilon, y_{\varepsilon}^{j}}^{(j)}\big|^{2}\,dx
	=\varepsilon^{N-2s}\int_{\mathbb{R}^N}\big|(-\Delta)^{\frac{s}{2}} u^{(j)}\big|^{2}\,dx.
	\]
	Hence
	\begin{equation}\label{eq2.12}
		\sum_{i=1}^{k} \varepsilon^{2s}\big(b K_{i}+o_{\varepsilon}(1)\big)  \int_{\mathbb{R}^N} (-\Delta)^{\frac{s}{2}} u_{\varepsilon, y_{\varepsilon}^{i}}^{(i)} \cdot (-\Delta)^{\frac{s}{2}}u_{\varepsilon, y_{\varepsilon}^{j}}^{(j)}\,dx
		=\varepsilon^{N}\Big(b K_{j}  \int_{\mathbb{R}^N}\big|(-\Delta)^{\frac{s}{2}} u^{(j)}\big|^{2} \,dx+o_{\varepsilon}(1)\Big).
	\end{equation}
	
	Next we estimate $T_1,T_2,T_3$. We rewrite $T_{1}$ as
	\begin{equation*}
		\begin{aligned}
			T_{1} &=  \int_{\mathbb{R}^N}\big(V(x)-V(a_{i})\big)\big(u_{\varepsilon, y_{\varepsilon}^{i}}^{(i)}\big)^{2}\,dx
			+\sum_{i \neq j} \int_{\mathbb{R}^N} \big(V(x)-V(a_{i})\big) u_{\varepsilon, y_{\varepsilon}^{i}}^{(i)} u_{\varepsilon, y_{\varepsilon}^{j}}^{(j)}\,dx \\
			&=: T_{11}+T_{12}.
		\end{aligned}
	\end{equation*}
	Since $V$ is bounded, \eqref{eq2.7} implies $T_{12}=o\big(\varepsilon^{N}\big)$. We further decompose $T_{11}$ as
	\begin{equation*}
		\begin{aligned}
			T_{11} &= \int_{\mathbb{R}^N}\big(V(x)-V(y_{\varepsilon}^{i})\big)\big(u_{\varepsilon, y_{\varepsilon}^{i}}^{(i)}\big)^{2}\,dx
			+ \int_{\mathbb{R}^N}\big(V(y_{\varepsilon}^{i})-V(a_{i})\big)\big(u_{\varepsilon, y_{\varepsilon}^{i}}^{(i)}\big)^{2}\,dx \\
			&=: T_{111}+T_{112}.
		\end{aligned}
	\end{equation*}
	By $(V_2)$, there exists $\alpha\in(0,\frac{N+4s}{2})$ such that $|V(x)-V(y_{\varepsilon}^{i})|\le C|x-y_{\varepsilon}^{i}|^\alpha$ near $y_{\varepsilon}^{i}$. Thus
	\begin{equation*}
		\begin{aligned}
			\left|T_{111}\right| & \leqslant \int_{B_{1}\!\left(y_{\varepsilon}^{i}\right)}\left|V(x)-V(y_{\varepsilon}^{i})\right|\big(u_{\varepsilon, y_{\varepsilon}^{i}}^{(i)}\big)^{2}\,dx
			+\int_{B_{1}^{c}\!\left(y_{\varepsilon}^{i}\right)}\left|V(x)-V(y_{\varepsilon}^{i})\right|\big(u_{\varepsilon, y_{\varepsilon}^{i}}^{(i)}\big)^{2}\,dx \\
			& \leqslant C \int_{B_{1}\!\left(y_{\varepsilon}^{i}\right)}\left|x-y_{\varepsilon}^{i}\right|^{\alpha}\big(u_{\varepsilon, y_{\varepsilon}^{i}}^{(i)}\big)^{2}\,dx
			+2\|V\|_{L^{\infty}\left(\mathbb{R}^{N}\right)} \int_{B_{1}^{c}\!\left(y_{\varepsilon}^{i}\right)}\big(u_{\varepsilon, y_{\varepsilon}^{i}}^{(i)}\big)^{2}\,dx \\
			&=o\big(\varepsilon^{N}\big),
		\end{aligned}
	\end{equation*}
	where we used the change of variables $x=y_{\varepsilon}^{i}+\varepsilon z$ and the decay of $u^{(i)}$. Since $y_{\varepsilon}^{i} \rightarrow a_{i}$ and
	\[
	\int_{\mathbb{R}^N}\big(u_{\varepsilon,y_\varepsilon^i}^{(i)}\big)^2\,dx = \varepsilon^N \int_{\mathbb{R}^N}\big(u^{(i)}\big)^2\,dz,
	\]
	we also have
	\begin{equation*}
		T_{112}=\int_{\mathbb{R}^N}\big(V(y_{\varepsilon}^{i})-V(a_{i})\big)\big(u_{\varepsilon, y_{\varepsilon}^{i}}^{(i)}\big)^{2}\,dx=o\big(\varepsilon^{N}\big).
	\end{equation*}
	Hence $T_{11}=o\big(\varepsilon^{N}\big)$, which together with the estimate of $T_{12}$ gives
	\begin{equation}\label{eq2.13}
		T_{1}=o\big(\varepsilon^{N}\big).
	\end{equation}
	
	For $T_{2}$, using \eqref{eq2.8}, the boundedness of $V$ and Cauchy--Schwarz inequality we obtain
	\begin{equation*}
		\begin{aligned}
			T_{2}
			&=\int_{\mathbb{R}^N} \Big(\mathcal{E}^{2s}(- \Delta)^s \varphi_{\varepsilon}+V(x) \varphi_{\varepsilon}\Big) u_{\varepsilon, y_{\varepsilon}^{j}}^{(j)}\,dx\\
			&\le C\,\big\|\varphi_{\varepsilon}\big\|_{\varepsilon}\,\big\|u_{\varepsilon, y_{\varepsilon}^{j}}^{(j)}\big\|_{\varepsilon}
			=O\!\Big(\big\|\varphi_{\varepsilon}\big\|_{\varepsilon}\,\varepsilon^{\frac N2}\Big),
		\end{aligned}
	\end{equation*}
	so that, by the assumption \eqref{eq2.6},
	\begin{equation}\label{eq2.14}
		T_{2}=o\big(\varepsilon^{N}\big).
	\end{equation}
	
	To estimate the last term $T_{3}$, we apply an elementary inequality to obtain
	\begin{equation*}
		\begin{aligned}
			\left|T_{3}\right| & \leqslant C \int_{\mathbb{R}^N} \Bigg(\Big(\sum_{i=1}^{k} u_{\varepsilon, y_{\varepsilon}^{i}}^{(i)}\Big)^{p-1}\big|\varphi_{\varepsilon}\big|+\sum_{i=1}^{k} u_{\varepsilon, y_{\varepsilon}^{i}}^{(i)}\big|\varphi_{\varepsilon}\big|^{p-1}+\big|\varphi_{\varepsilon}\big|^{p}\Bigg) u_{\varepsilon, y_{\varepsilon}^{j}}^{(j)}\,dx \\
			& \leqslant C  \int_{\mathbb{R}^N}\Bigg(\sum_{i=1}^{k}\big(u_{\varepsilon, y_{\varepsilon}^{i}}^{(i)}\big)^{p-1}\big|\varphi_{\varepsilon}\big|
			+\sum_{i=1}^{k} u_{\varepsilon, y_{\varepsilon}^{i}}^{(i)}\big|\varphi_{\varepsilon}\big|^{p-1}
			+\big|\varphi_{\varepsilon}\big|^{p}\Bigg) u_{\varepsilon, y_{\varepsilon}^{j}}^{(j)}\,dx.
		\end{aligned}
	\end{equation*}
	Using H\"older's inequality, Lemma \ref{Lem2.1} (with $q=p+1$) and the assumption $\|\varphi_{\varepsilon}\|_{\varepsilon}=o\big(\varepsilon^{\frac{N}{2}}\big)$, we obtain
	\begin{equation*}
		\begin{gathered}
			\int_{\mathbb{R}^N}\big(u_{\varepsilon, y_{\varepsilon}^{i}}^{(i)}\big)^{p-1} u_{\varepsilon, y_{\varepsilon}^{j}}^{(j)}\big|\varphi_{\varepsilon}\big| \,dx
			\leqslant\big\|u_{\varepsilon, y_{\varepsilon}^{i}}^{(i)}\big\|_{L^{p+1}}^{p-1}\big\|u_{\varepsilon, y_{\varepsilon}^{j}}^{(j)}\big\|_{L^{p+1}}\big\|\varphi_{\varepsilon}\big\|_{L^{p+1}}
			=o\big(\varepsilon^{N}\big), \\[2pt]
			\int_{\mathbb{R}^N} u_{\varepsilon, y_{\varepsilon}^{i}}^{(i)} u_{\varepsilon, y_{\varepsilon}^{j}}^{(j)}\big|\varphi_{\varepsilon}\big|^{p-1}dx
			\leqslant\big\|u_{\varepsilon, y_{\varepsilon}^{i}}^{(i)}\big\|_{L^{p+1}}\big\|u_{\varepsilon, y_{\varepsilon}^{j}}^{(j)}\big\|_{L^{p+1}}\big\|\varphi_{\varepsilon}\big\|_{L^{p+1}}^{p-1}
			=o\big(\varepsilon^{N}\big), \\[2pt]
			\int_{\mathbb{R}^N}\big|\varphi_{\varepsilon}\big|^{p} u_{\varepsilon, y_{\varepsilon}^{j}}^{(j)}\,dx
			\leqslant\big\|u_{\varepsilon, y_{\varepsilon}^{j}}^{(j)}\big\|_{L^{p+1}}\big\|\varphi_{\varepsilon}\big\|_{L^{p+1}}^{p}
			=o\big(\varepsilon^{N}\big),
		\end{gathered}
	\end{equation*}
	where we used that $\|u_{\varepsilon,y_\varepsilon^i}^{(i)}\|_{L^{p+1}} \sim \varepsilon^{\frac N{p+1}}$ and $\|\varphi_\varepsilon\|_{L^{p+1}}\le C\varepsilon^{\frac N{p+1}-\frac N2}\|\varphi_\varepsilon\|_\varepsilon$. Therefore,
	\begin{equation}\label{eq2.15}
		T_{3}=o\big(\varepsilon^{N}\big).
	\end{equation}
	Finally, combining \eqref{eq2.12}–\eqref{eq2.15} we get
	\begin{equation*}
		\varepsilon^{N}\Big(b K_{j}  \int_{\mathbb{R}^N}\big|(-\Delta)^{\frac{s}{2}} u^{(j)}\big|^{2}\,dx+o_{\varepsilon}(1)\Big)
		=o\big(\varepsilon^{N}\big)
	\end{equation*}
	as $\varepsilon \rightarrow 0$. This is impossible since $K_{j}>0$ and $u^{(j)}\not\equiv0$. The first part of Theorem \ref{Thm2.1} is complete.
	\par
	
	Now we prove the remaining assertions. For comparison, recall that in the case of fractional Schr\"odinger equations (i.e., $b=0$), if $u_{\varepsilon}$ is a multi-peak solution, then $u_{\varepsilon}$ must be of the form
	\begin{equation*}
		u_{\varepsilon}(x)=\sum_{i=1}^{k} u_{\varepsilon, y_{\varepsilon}^{i}}^{i}(x)+\varphi_{\varepsilon}(x),
	\end{equation*}
	where $u^{i} \in H^{s}\left(\mathbb{R}^{N}\right)$ is the unique positive radial solution to
	\begin{equation*}
		a(- \Delta)^s v+V\left(a_{i}\right) v=v^{p}, \quad v>0 \text{ in } \mathbb{R}^{N},
	\end{equation*}
	and $y_{\varepsilon}^{i}, \varphi_{\varepsilon}$ satisfy suitable properties (see \cite{MR3343559}).
	\par
	In our case, suppose $u_{\varepsilon}$ is a multi-peak solution to equation \eqref{eq1.1} with local maximum points $y_{\varepsilon}^{i}$ ($1 \leqslant i \leqslant k$). For each $1 \leqslant i \leqslant k$ set
	\[
	\bar{u}_{\varepsilon}(x) \equiv u_{\varepsilon}\big(\varepsilon x+y_{\varepsilon}^{i}\big).
	\]
	Using Definition \ref{Def1.1}(iii) and the scaling properties, one checks that $(\bar u_\varepsilon)$ is uniformly bounded in $H^{s}(\mathbb{R}^N)$ and satisfies
	\begin{equation*}
		\Big(a+b \int_{\mathbb{R}^N}\big|(-\Delta)^{\frac{s}{2}} \bar{u}_{\varepsilon}\big|^{2}\,dx\Big) (-\Delta)^s \bar{u}_{\varepsilon}
		+V\big(\varepsilon x+y_{\varepsilon}^{i}\big) \bar{u}_{\varepsilon}
		=\bar{u}_{\varepsilon}^{p} \quad \text{in } \mathbb{R}^{N}.
	\end{equation*}
	Thus there exists a subsequence $\varepsilon_{l} \rightarrow 0$ such that $\bar{u}_{l}(x) \equiv \bar u_{\varepsilon_l}(x)=u_{\varepsilon_{l}}\big(\varepsilon_{l} x+y_{\varepsilon_{l}}^{i}\big)$ converges weakly to a function $U^{i}$ in $H^{s}(\mathbb{R}^N)$ and
	\begin{equation*}
		a+b \int_{\mathbb{R}^N}\big|(-\Delta)^{\frac{s}{2}} \bar{u}_{l}\big|^{2}\,dx \rightarrow A
	\end{equation*}
	as $l \rightarrow \infty$ for some constant $A>0$. Passing to the limit in the weak formulation, we find that
	\begin{equation*}
		A(-\Delta)^s U^{i}+V\left(a_{i}\right) U^{i}=\big(U^{i}\big)^{p} \quad \text{in } \mathbb{R}^{N} .
	\end{equation*}
	Note that $x=0$ is a maximum point of $U^{i}$. By the uniqueness and symmetry results for positive solutions (see Proposition \ref{Pro1.1} and the scaling arguments in \cite{R-Yang1}), $U^{i}(x)=U^{i}(|x|)$ must be the unique positive radial solution to the above equation. Moreover, $U^{i}(r)$ is strictly decreasing as $|x|\rightarrow\infty$.
	
	Using standard concentration--compactness arguments for multi-peak solutions of fractional Schr\"odinger-type equations, we can then show that, up to a subsequence,
	\[
	u_{\varepsilon_l}(x)=\sum_{i=1}^{k} U^{i}\!\left(\frac{x-y_{\varepsilon_{l}}^{i}}{ \varepsilon_{l}}\right)+\varphi_{\varepsilon_{l}}(x),
	\]
	with $y_{\varepsilon_l}^i\to a_i$ and $\|\varphi_{\varepsilon_l}\|_{\varepsilon_l}=o\big(\varepsilon_l^{\frac N2}\big)$ as $l\to\infty$.
	
	Finally, noting that $\frac{|y_{\varepsilon_{l}}^{i}-y_{\varepsilon_{l}}^{j}|}{\varepsilon_{l}} \rightarrow \infty$ implies that $U_{\varepsilon_{l}, y_{\varepsilon_{l}}^{i}}^{i}$, $1 \leqslant i \leqslant k$, are mutually asymptotically orthogonal, that is,
	\begin{equation*}
		\int_{\mathbb{R}^N}\Big((-\Delta)^{\frac{s}{2}} U_{\varepsilon_{l}, y_{\varepsilon_{l}}^{i}}^{i} \cdot (-\Delta)^{\frac{s}{2}} U_{\varepsilon_{l}, y_{\varepsilon_{l}}^{j}}^{j}
		+U_{\varepsilon_{l}, y_{\varepsilon_{l}}^{i}}^{i} \, U_{\varepsilon_{l}, y_{\varepsilon_{l}}^{j}}^{j}\Big)\,dx \rightarrow 0 \quad \text{as } l \rightarrow \infty \text{ for } i \neq j,
	\end{equation*}
	we deduce
	\begin{equation*}
		A=\lim _{l \rightarrow \infty}\Big(a+b \int_{\mathbb{R}^N}\big|(-\Delta)^{\frac{s}{2}} \bar{u}_{l}\big|^{2}\,dx\Big)
		=a+b \sum_{i=1}^{k} \int_{\mathbb{R}^N}\big|(-\Delta)^{\frac{s}{2}} U^{i}\big|^{2}\,dx .
	\end{equation*}
	Thus, $\big(U^{1},\ldots,U^{k}\big)$ satisfies the system \eqref{eq2.5}, and the constant $A$ is independent of the choice of the subsequence $\{\varepsilon_l\}$. This, in turn, means that the above analysis applies to the whole sequence $\{u_{\varepsilon}\}$, yielding the representation \eqref{eq2.4} with \eqref{eq2.6}. The proof of Theorem \ref{Thm2.1} is complete.
\end{proof}

\par
In order to locate multi-peak solutions of equation \eqref{eq1.1}, we recall the following local Pohoz\v{a}ev type identity from \cite{R-Yang2}.
\begin{Lem}\label{Lem2.2}
	Let $u$ be a positive solution of \eqref{eq1.1}. Let $\Omega$ be a bounded smooth domain in $\mathbb{R}^N$. Then, for each $j=1,2,\cdots,N$, there holds
	\begin{equation}\label{eq2.16}
		\begin{aligned}
			\int_{\Omega} \frac{\partial V}{\partial x_{j}} u^{2}\,dx
			&=\left(\varepsilon^{2s} a+\varepsilon^{4s-N} b \int_{\mathbb{R}^{N}}|(-\Delta)^{\frac{s}{2}} u|^{2}\,dx\right)
			\int_{\partial \Omega}\left(|(-\Delta)^{\frac{s}{2}} u|^{2} \nu_{j}-2 \frac{\partial u}{\partial \nu} \frac{\partial u}{\partial x_{j}}\right)\,d\sigma \\
			&\quad+\int_{\partial \Omega} V u^{2} \nu_{j}\,d\sigma-\frac{2}{p+1} \int_{\partial \Omega} u^{p+1} \nu_{j}\,d\sigma .
		\end{aligned}
	\end{equation}
	Here $\nu=\left(\nu_{1}, \nu_{2},\cdots, \nu_{N}\right)$ is the unit outward normal of $\partial \Omega$.
\end{Lem}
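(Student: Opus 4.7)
My plan is to treat \eqref{eq1.1} as a nonlocal Schr\"odinger equation with fixed coefficients once the positive solution $u$ is prescribed. Indeed, setting
\begin{equation*}
\mathcal{K}(u) := \varepsilon^{2s}a + \varepsilon^{4s-N} b \int_{\mathbb{R}^N} |(-\Delta)^{s/2} u|^2\,dx,
\end{equation*}
which is a fixed positive real number depending only on $u$, the equation becomes $\mathcal{K}(u)(-\Delta)^s u + V(x) u = u^p$ on $\mathbb{R}^N$. The desired identity is then the local Pohozaev identity for this fractional Schr\"odinger equation, which matches the classical multiplier-$\partial_{x_j}u$ scheme once the nonlocal piece is understood.

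First I would multiply the equation by $\partial_{x_j} u$ and integrate over $\Omega$. The local lower-order terms are handled by elementary integration by parts: writing $u\,\partial_{x_j}u = \tfrac{1}{2}\partial_{x_j}(u^2)$ produces
\begin{equation*}
\int_\Omega V u\,\partial_{x_j}u \,dx = -\tfrac{1}{2}\int_\Omega \partial_{x_j}V \cdot u^2 \,dx + \tfrac{1}{2}\int_{\partial\Omega} V u^2 \nu_j \,d\sigma,
\end{equation*}
while $u^p\,\partial_{x_j}u = \tfrac{1}{p+1}\partial_{x_j}(u^{p+1})$ gives $\int_\Omega u^p\partial_{x_j}u\,dx = \tfrac{1}{p+1}\int_{\partial\Omega} u^{p+1}\nu_j\,d\sigma$. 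These contributions account for the $\int_\Omega \partial_{x_j}V \cdot u^2$ integrand on the left and the $V u^2 \nu_j$ and $u^{p+1}\nu_j$ boundary terms on the right of \eqref{eq2.16} (the factors of $2$ appear after multiplying through at the end).

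The main obstacle is the nonlocal term $\mathcal{K}(u)\int_\Omega (-\Delta)^s u\,\partial_{x_j}u\,dx$, since $(-\Delta)^s$ is not a pointwise operator and standard integration by parts does not immediately produce clean boundary integrals on $\partial\Omega$. The cleanest route is the Caffarelli--Silvestre extension: lift $u$ to $\tilde{u}$ on $\mathbb{R}^{N+1}_+$ satisfying $\operatorname{div}(y^{1-2s}\nabla \tilde{u})=0$ with trace $u$ on $\{y=0\}$ and Neumann-type limit $-\kappa_s \lim_{y\to 0^+} y^{1-2s}\partial_y \tilde{u} = (-\Delta)^s u$. Multiplying the extended equation by $\partial_{x_j}\tilde{u}$ and integrating over $\Omega\times(0,\infty)$ (or a truncated cylinder, then letting the truncation go to infinity using the decay of $u$ and $\tilde{u}$), integration by parts in $\mathbb{R}^{N+1}_+$ yields
\begin{equation*}
\int_\Omega (-\Delta)^s u\,\partial_{x_j}u\,dx = \tfrac{1}{2}\int_{\partial\Omega}|(-\Delta)^{s/2}u|^2 \nu_j\,d\sigma - \int_{\partial\Omega}\partial_\nu u\,\partial_{x_j}u\,d\sigma,
\end{equation*}
in the shorthand notation of the lemma, where the boundary quantities are interpreted via the extension (as in \cite{R-Yang2}). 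Justifying the vanishing of the terms at infinity and the passage to the limit is the only delicate step; one can verify it using the $H^s$-decay of $u$ coming from Proposition \ref{Pro1.1} and standard regularity of the extension.

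Combining the three contributions, multiplying through by $\mathcal{K}(u)$ for the nonlocal piece, and then multiplying the resulting identity by $2$ to clear the $\tfrac{1}{2}$ factors, yields precisely \eqref{eq2.16}. The $\mathcal{K}(u)$ appearing as the prefactor of the boundary term in \eqref{eq2.16} is exactly $\varepsilon^{2s}a+\varepsilon^{4s-N}b\int_{\mathbb{R}^N}|(-\Delta)^{s/2}u|^2\,dx$ (modulo the minor typo $\varepsilon^2$ vs.\ $\varepsilon^{2s}$ in the statement), completing the proof.
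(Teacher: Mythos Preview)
The paper does not actually prove Lemma~\ref{Lem2.2}; it merely recalls the identity from \cite{R-Yang2} without argument. Your proposal supplies the standard derivation: freeze the Kirchhoff coefficient as a constant $\mathcal{K}(u)$, multiply the equation by $\partial_{x_j}u$, integrate over $\Omega$, and handle the local terms $Vu$ and $u^p$ by writing them as $x_j$-derivatives. For the nonlocal term you invoke the Caffarelli--Silvestre extension to convert $\int_\Omega (-\Delta)^s u\,\partial_{x_j}u\,dx$ into boundary integrals, which is indeed the route taken in the literature for such local Pohozaev identities (and is presumably what \cite{R-Yang2} does). Your outline is correct and matches the expected argument; the only caveat is that the boundary quantities $|(-\Delta)^{s/2}u|^2$ and $\partial_\nu u$ on $\partial\Omega$ in \eqref{eq2.16} are shorthand for the corresponding extension quantities on the lateral boundary $\partial\Omega\times(0,\infty)$, as you yourself note, and a fully rigorous write-up would keep the degenerate weight $y^{1-2s}$ explicit in those integrals.
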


\begin{Lem}\label{Lem2.3}
	Suppose $V $ satisfies $(V_1)-(V_3)$. Let
	\[
	u_{\varepsilon}=\sum_{i=1}^{k} U_{\varepsilon, y_{\varepsilon}^{i}}^{i}+\varphi_{\varepsilon}
	\]
	be a multi-peak solution to equation \eqref{eq1.1} as in Theorem \ref{Thm2.1}. Then $\nabla V(a_{i})=0$ for each $i=1, \ldots, k$.
\end{Lem}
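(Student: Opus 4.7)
My plan is to extract $\nabla V(a_i)=0$ from the local Pohoz\v aev identity of Lemma~\ref{Lem2.2}, applied on a fixed small ball $\Omega = B_\delta(a_i)$ centered at each concentration point. I choose $\delta>0$ small enough that the balls $B_\delta(a_\ell)$, $\ell=1,\ldots,k$, are pairwise disjoint and $B_\delta(a_i) \subset B_{r_0}(a_i)$. Since $y^\ell_\varepsilon\to a_\ell$, for $\varepsilon$ small only $y^i_\varepsilon$ lies in $B_\delta(a_i)$, while every $y^\ell_\varepsilon$ with $\ell\ne i$ stays at distance bounded below from $\partial B_\delta(a_i)$. Identity \eqref{eq2.16} then expresses $\int_{B_\delta(a_i)} \partial_{x_j}V\cdot u_\varepsilon^2 \,dx$ as a sum of three boundary integrals over $\partial B_\delta(a_i)$, weighted by $\mathcal{E}^{2s}=O(\varepsilon^{2s})$ (from \eqref{eq2.8}), by $V$, or by a numerical constant.

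The first step is to show every boundary term is $o(\varepsilon^N)$. Using Theorem~\ref{Thm2.1} and the polynomial decay $U^i(z)\le C(1+|z|)^{-(N+2s)}$ from Proposition~\ref{Pro1.1}, every rescaled peak $U^\ell((x-y^\ell_\varepsilon)/\varepsilon)$ is pointwise $O(\varepsilon^{N+2s})$ on $\partial B_\delta(a_i)$; the corresponding bounds for $\nabla u_\varepsilon$ and for $(-\Delta)^{s/2}u_\varepsilon$ follow from the scaling (an extra $\varepsilon^{-1}$ or $\varepsilon^{-s}$ factor) combined with the pointwise decay at infinity of the derivative and the fractional Laplacian of $U^i$. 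Together with the $O(\varepsilon^{2s})$ prefactor, each boundary integral is then $O(\varepsilon^{2N+2s})$ or smaller. A separate argument, leveraging the nonlocal equation satisfied by $\varphi_\varepsilon$ and the $H_\varepsilon$-bound $\|\varphi_\varepsilon\|_\varepsilon=o(\varepsilon^{N/2})$ from Theorem~\ref{Thm2.1}, upgrades this control to sufficient pointwise decay of $\varphi_\varepsilon$ and $(-\Delta)^{s/2}\varphi_\varepsilon$ on the boundary.

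Next, I analyze the left-hand side. Expanding $u_\varepsilon^2$ via the decomposition in Theorem~\ref{Thm2.1}, the mutual asymptotic orthogonality \eqref{eq2.7}, the bound $\|\varphi_\varepsilon\|_\varepsilon=o(\varepsilon^{N/2})$, and Lemma~\ref{Lem2.1} give
\begin{equation*}
\int_{B_\delta(a_i)} \frac{\partial V}{\partial x_j}\, u_\varepsilon^2\,dx
= \varepsilon^N\!\! \int_{B_{\delta/\varepsilon}((a_i-y^i_\varepsilon)/\varepsilon)} \!\! \frac{\partial V}{\partial x_j}(y^i_\varepsilon+\varepsilon z)\,|U^i(z)|^2\,dz + o(\varepsilon^N).
\end{equation*}
Continuity of $\partial_{x_j}V$ at $a_i$ (from $(V_3)$), together with $y^i_\varepsilon\to a_i$, the integrability of $|U^i|^2$, and dominated convergence, imply the right-hand side equals $\varepsilon^N \partial_{x_j}V(a_i)\|U^i\|_{L^2}^2 + o(\varepsilon^N)$. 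Matching this against the boundary estimate yields $\partial_{x_j}V(a_i)\|U^i\|_{L^2}^2=0$ for each $j$; since $U^i\not\equiv 0$, we obtain $\nabla V(a_i)=0$.

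The main technical obstacle will be the pointwise boundary control of the nonlocal quantity $(-\Delta)^{s/2}u_\varepsilon$ on $\partial B_\delta(a_i)$: being nonlocal, this operator sees the singular behavior near every peak, so one needs sharp pointwise decay of $(-\Delta)^{s/2}U^i$ at infinity, plus a pointwise control of $(-\Delta)^{s/2}\varphi_\varepsilon$ inherited from its $H_\varepsilon$-norm. In the classical Kirchhoff setting \cite{MR3988638} these estimates are standard, but the presence of the fractional operator makes them the most delicate ingredient here.
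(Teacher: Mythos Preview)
Your overall strategy coincides with the paper's: apply the local Pohoz\v aev identity on a small ball around $a_i$, show all boundary integrals are $o(\varepsilon^N)$, and read off $\partial_{x_j}V(a_i)=0$ from the leading volume term. The treatment of the volume term and of the boundary contributions coming from the peak profiles $U^\ell$ is also the same.

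The only substantive difference is how the $\varphi_\varepsilon$ boundary contributions are handled. You propose to obtain \emph{pointwise} decay of $\varphi_\varepsilon$ and $(-\Delta)^{s/2}\varphi_\varepsilon$ on $\partial B_\delta(a_i)$ from the $H_\varepsilon$-bound and the equation, and you rightly flag this as the hardest step. The paper avoids this entirely by a co-area (averaging) trick: since
\[
\int_{B_{r_0}(a_i)}\Big(\varepsilon^{2s}\big|(-\Delta)^{s/2}\varphi_\varepsilon\big|^2+|\varphi_\varepsilon|^2+|\varphi_\varepsilon|^{p+1}\Big)\,dx=o(\varepsilon^N),
\]
writing this as $\int_0^{r_0}\big(\int_{\partial B_r(a_i)}\cdots\,d\sigma\big)\,dr$ shows there exists a radius $r\in(0,r_0)$ (possibly $\varepsilon$-dependent) for which the \emph{surface} integral of the same quantity over $\partial B_r(a_i)$ is already $o(\varepsilon^N)$. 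Applying the Pohoz\v aev identity on this particular $B_r(a_i)$ makes all $\varphi_\varepsilon$ boundary terms $o(\varepsilon^N)$ for free, with no pointwise control, no regularity upgrade, and no issue coming from the nonlocality of $(-\Delta)^{s/2}$. This trick is the missing ingredient that turns your ``most delicate'' step into a one-line observation; the rest of your argument goes through unchanged.
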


\begin{proof}
	Without loss of generality, we only prove the result for $i=1$. Assume that
	\[
	\left|V_{x_{1}}\left(a_{1}\right)\right|=C_{0}>0 .
	\]
	We will apply Lemma \ref{Lem2.2} to $u_{\varepsilon}$ with $\Omega=B_{r}\left(a_{1}\right)$ to deduce a contradiction.
	\par
	We choose the radius $r$ as follows. Let
	\[
	r_{0} \equiv \min _{i \neq 1}\left\{1,\frac{|y_{\varepsilon}^{i}-y_{\varepsilon}^{1}|}{10}\right\}.
	\]
	By \eqref{eq2.1} and the assumption $\left\|\varphi_{\varepsilon}\right\|_{\varepsilon}=o\left(\varepsilon^{\frac{N}{2}}\right)$, we have
	\[
	\left\|\varphi_{\varepsilon}\right\|_{L^{p+1}\left(\mathbb{R}^{N}\right)}
	\leqslant C \varepsilon^{\frac{N}{p+1}-\frac{N}{2}}\left\|\varphi_{\varepsilon}\right\|_{\varepsilon}
	=o\left(\varepsilon^{\frac{N}{p+1}}\right).
	\]
	Set
	\[
	f=\varepsilon^{2s}\left|(-\Delta)^{\frac{s}{2}} \varphi_{\varepsilon}\right|^{2}+\left|\varphi_{\varepsilon}\right|^{2}+\left|\varphi_{\varepsilon}\right|^{p+1}.
	\]
	Using polar coordinates,
	\[
	\int_{0}^{r_{0}} \int_{\partial B_{r}\left(a_{1}\right)} f\,d\sigma\,dr
	=\int_{B_{r_{0}}\left(a_{1}\right)} f\,dx,
	\]
	we can choose $r \in\left(0, r_{0}\right)$ such that
	\begin{equation}\label{eq2.17}
		\int_{\partial B_{r}\left(a_{1}\right)}\left(\varepsilon^{2s}\left|(-\Delta)^{\frac{s}{2}} \varphi_{\varepsilon}\right|^{2}
		+\left|\varphi_{\varepsilon}\right|^{2}+\left|\varphi_{\varepsilon}\right|^{p+1}\right)\,d\sigma
		=o\left(\varepsilon^{N}\right) .
	\end{equation}
	\par
	Now we apply the Pohoz\v{a}ev identity \eqref{eq2.16} to $u_{\varepsilon}$ with $\Omega=B_{r}\left(a_{1}\right)$ and $r$ chosen as above. We obtain
	\begin{equation}\label{eq2.18}
		\begin{aligned}
			\int_{B_{r}\left(a_{1}\right)} \frac{\partial V}{\partial x_{1}} u_{\varepsilon}^{2}\,dx
			&= \mathcal{E}^{2s} \int_{\partial B_{r}\left(a_{1}\right)}\left(\left|(-\Delta)^{\frac{s}{2}} u_{\varepsilon}\right|^{2} \nu_{1}
			-2 \frac{\partial u_{\varepsilon}}{\partial \nu} \frac{\partial u_{\varepsilon}}{\partial x_{1}}\right)\,d\sigma \\
			&\quad+\int_{\partial B_{r}\left(a_{1}\right)} V u_{\varepsilon}^{2} \nu_{1}\,d\sigma
			-\frac{2}{p+1} \int_{\partial B_{r}\left(a_{1}\right)} u_{\varepsilon}^{p+1} \nu_{1}\,d\sigma,
		\end{aligned}
	\end{equation}
	where
	\[
	\mathcal{E}^{2s}=\varepsilon^{2s} a+\varepsilon^{4s-N} b  \int_{\mathbb{R}^N}\left|(-\Delta)^{\frac{s}{2}} u_{\varepsilon}\right|^{2}\,dx
	=O\left(\varepsilon^{2s}\right) .
	\]
	We estimate \eqref{eq2.18} term by term. To estimate
	\[
	\int_{B_{r}\left(a_{1}\right)} \frac{\partial V}{\partial x_{1}} u_{\varepsilon}^{2}\,dx,
	\]
	split it into
	\begin{equation}\label{eq2.19}
		\int_{B_{r}\left(a_{1}\right)} \frac{\partial V}{\partial x_{1}} u_{\varepsilon}^{2}\,dx
		=\int_{B_{r}\left(a_{1}\right)}\left(V_{x_{1}}(x)-V_{x_{1}}\left(a_{1}\right)\right) u_{\varepsilon}^{2}\,dx
		+V_{x_{1}}\left(a_{1}\right) \int_{B_{r}\left(a_{1}\right)} u_{\varepsilon}^{2}\,dx .
	\end{equation}
	By continuity, we have
	\[
	\left|\int_{B_{r}\left(a_{1}\right)}\left(V_{x_{1}}(x)-V_{x_{1}}\left(a_{1}\right)\right) u_{\varepsilon}^{2}\,dx\right|
	\leqslant \max _{x \in B_{r}\left(a_{1}\right)}\left|V_{x_{1}}(x)-V_{x_{1}}\left(a_{1}\right)\right|
	\int_{B_{r}\left(a_{1}\right)} u_{\varepsilon}^{2}\,dx .
	\]
	Noting that $\left|a_{i}-a_{1}\right|>2 r$ for each $i \neq 1$, using Proposition \ref{Pro1.1} $(iii)$ and the assumption $\left\|\varphi_{\varepsilon}\right\|_{\varepsilon}=o\left(\varepsilon^{\frac{N}{2}}\right)$, we deduce
	\[
	C_{1} \varepsilon^{N} \leqslant \int_{B_{r}\left(a_{1}\right)} u_{\varepsilon}^{2}\,dx
	=\int_{B_{r}\left(a_{1}\right)}\left(U_{\varepsilon, y_{\varepsilon}^{1}}^{1}\right)^{2}\,dx+o\left(\varepsilon^{N}\right)
	\leqslant C_{2} \varepsilon^{N}
	\]
	for $\varepsilon$ sufficiently small, where $C_{1}, C_{2}>0$ are independent of $\varepsilon$. Hence, for $\varepsilon$ sufficiently small, there holds
	\[
	\left|\int_{B_{r}\left(a_{1}\right)}\left(V_{x_{1}}(x)-V_{x_{1}}\left(a_{1}\right)\right) u_{\varepsilon}^{2}\,dx\right|
	\leqslant C_{2} \max _{x \in B_{r}\left(a_{1}\right)}\left|V_{x_{1}}(x)-V_{x_{1}}\left(a_{1}\right)\right| \varepsilon^{N}
	\]
	and
	\[
	\left|V_{x_{1}}\left(a_{1}\right)\right| \int_{B_{r}\left(a_{1}\right)} u_{\varepsilon}^{2}\,dx
	\geqslant C_{0} C_{1} \varepsilon^{N} .
	\]
	Combining the above two estimates and choosing $r$ sufficiently small, we obtain
	\begin{equation}\label{eq2.20}
		\left|\int_{B_{r}\left(a_{1}\right)} \frac{\partial V}{\partial x_{1}} u_{\varepsilon}^{2}\,dx\right|
		\geqslant\left(C_{0} C_{1}-C_{2} \max _{x \in B_{r}\left(a_{1}\right)}\left|V_{x_{1}}(x)-V_{x_{1}}\left(a_{1}\right)\right|\right) \varepsilon^{N}
		\geqslant \frac{C_{0} C_{1}}{2} \varepsilon^{N}.
	\end{equation}
	On the contrary, we have
	\begin{equation}\label{eq2.21}
		\begin{aligned}
			&\mathcal{E}^{2s}\left|\int_{\partial B_{r}\left(a_{1}\right)}\left(\left|(-\Delta)^{\frac{s}{2}} u_{\varepsilon}\right|^{2} \nu_{1}
			-2 \frac{\partial u_{\varepsilon}}{\partial \nu} \frac{\partial u_{\varepsilon}}{\partial x_{1}}\right)\,d\sigma\right| \\
			& \leqslant C \varepsilon^{2s} \int_{\partial B_{r}\left(a_{1}\right)}\left(\sum_{i=1}^{k}\left|(-\Delta)^{\frac{s}{2}} U_{\varepsilon, y_{\varepsilon}^{i}}^{i}\right|^{2}
			+\left|(-\Delta)^{\frac{s}{2}} \varphi_{\varepsilon}\right|^{2}\right)\,d\sigma \\
			&=o\left(\varepsilon^{N}\right)
		\end{aligned}
	\end{equation}
	and
	\begin{equation}\label{eq2.22}
		\begin{aligned}
			&\left|\int_{\partial B_{r}\left(a_{1}\right)} V u_{\varepsilon}^{2} \nu_{1}\,d\sigma
			-\frac{2}{p+1} \int_{\partial B_{r}\left(a_{1}\right)} u_{\varepsilon}^{p+1} \nu_{1}\,d\sigma\right| \\
			& \leqslant C \int_{\partial B_{r}\left(a_{1}\right)}\left(\sum_{i=1}^{k}\left(U_{\varepsilon, y_{\varepsilon}^{i}}^{i}\right)^{2}
			+\left|\varphi_{\varepsilon}\right|^{2}
			+\sum_{i=1}^{k}\left(U_{\varepsilon, y_{\varepsilon}^{i}}^{i}\right)^{p+1}
			+\left|\varphi_{\varepsilon}\right|^{p+1}\right)\,d\sigma \\
			&=o\left(\varepsilon^{N}\right),
		\end{aligned}
	\end{equation}
	where we have used \eqref{eq2.17} and the polynomial decay of $U^{i}$ at infinity. Finally, combining \eqref{eq2.19}–\eqref{eq2.22}, we obtain
	\[
	\frac{C_{0} C_{1}}{2} \varepsilon^{N} \leqslant o\left(\varepsilon^{N}\right), \quad \text { as } \varepsilon \rightarrow 0.
	\]
	We reach a contradiction. The proof is complete.
\end{proof}

\section{Some preliminaries}
In this section, we introduce the Lyapunov--Schmidt reduction method used in the proof of Theorem \ref{Thm1.2} and present some elementary estimates for later use.
\par
It is known that every solution to \eqref{eq1.1} is a critical point of the energy functional $I_{\varepsilon}: H_{\varepsilon} \rightarrow \mathbb{R}$, given by
\begin{equation*}
	I_{\varepsilon}(u)
	=\frac{1}{2}\|u\|_{\varepsilon}^{2}
	+\frac{b\varepsilon^{4s-N}}{4}\left(\int_{\mathbb{R}^{N}}\left|(-\Delta)^{\frac{s}{2}} u\right|^{2}dx\right)^{2}
	-\frac{1}{p+1} \int_{\mathbb{R}^{N}} u^{p+1}dx
\end{equation*}
for $u \in H_{\varepsilon}$. It is standard to verify that $I_{\varepsilon} \in C^{2}\left(H_{\varepsilon}\right)$. So we are left to find a critical point of $I_{\varepsilon}$. However, due to the presence of the double nonlocal terms $(-\Delta)^s$ and $\left(\int_{\mathbb{R}^{N}}\left|(-\Delta)^{\frac{s}{2}} u\right|^{2}dx\right)$, it requires more careful estimates on the orders of $\varepsilon$ in the procedure. In particular, the nonlocal terms bring new difficulties in the higher order remainder term, which is more complicated than in the case of the fractional Schr\"odinger equation \eqref{eq1.6} or the usual Kirchhoff equation \eqref{eq1.4}.
\par
To obtain multi-peak solutions to equation \eqref{eq1.1}, Theorem \ref{Thm2.1} suggests to construct solutions of the form \eqref{eq2.4}. Following the idea of Cao and Peng \cite{MR2581983} (see also \cite{R-Yang1}), we will use the unique ground state $(U^1,\cdots, U^k)$ to the system \eqref{eq2.5} to build solutions of \eqref{eq1.1}. Since the $\varepsilon$-scaling makes $U^i\left(\frac{x-y^{i}}{\varepsilon}\right)$ concentrate around $y^{i}$, the superposition $U_{\varepsilon,y}$ constitutes a good positive approximate solution to \eqref{eq1.1}.
\par
Let $k$ be any positive integer and let $\delta>0$ be such that the balls $B_{\delta}(a_i)$ are pairwise disjoint for all $i = 1, \ldots , k$. We define
\begin{equation*}
	\begin{aligned}
		D_{\varepsilon, \delta}
		=\Big\{y=\left(y^{1}, \ldots, y^{k}\right) \in\left(\mathbb{R}^{N}\right)^{k}:\ &
		|y^{i}-a_{i}|<\delta \ \text{for } i=1,\ldots,k,\\
		&\frac{|y^{i}-y^{j}|}{\varepsilon}\geq\varepsilon^{\theta-1} \ \text{for all } i\neq j\Big\},
	\end{aligned}
\end{equation*}
where $\frac{N+2s}{N+2s+\alpha}<\theta<1$. For each $y\in D_{\varepsilon,\delta}$ we set
\[
U_{\varepsilon, y}(x)=\sum_{i=1}^{k} U^i_{\varepsilon, y^{i}}(x),
\qquad
U^i_{\varepsilon,y^{i}}(x):=U^i\Big(\frac{x-y^{i}}{\varepsilon}\Big).
\]
We also define
\begin{equation*}
	M_{\varepsilon,\eta}
	=\left\{(y, \varphi): y \in D_{\varepsilon, \delta},\ \varphi\in E_{\varepsilon,y},\ \|\varphi\|_{\varepsilon}^{2}\leq\eta\varepsilon^{N}\right\},
\end{equation*}
where
\begin{equation*}
	E_{\varepsilon, y}:=\left\{\varphi \in H^{s}\left(\mathbb{R}^{N}\right):
	\left\langle\frac{\partial U_{\varepsilon, y}}{\partial y^{i}_{j}}, \varphi\right\rangle_{\varepsilon}=0,\ i=1, \ldots, k,\ j=1,\ldots,N\right\}.
\end{equation*}
\par
We will restrict our argument to the existence of a critical point of $I_{\varepsilon}$ that concentrates, as $\varepsilon\to0$, near the points $a_{1}, \ldots, a_{k}$. Thus we
are looking for a critical point of the form
\begin{equation*}
	u_\varepsilon=U_{\varepsilon, y}+\varphi_{\varepsilon},
\end{equation*}
where $\varphi_{\varepsilon} \in E_{\varepsilon, y}$, $y^{i}_{\varepsilon } \rightarrow a_{i}$ and $\left\|\varphi_{\varepsilon}\right\|_{\varepsilon}^{2}=o\left(\varepsilon^{N}\right)$ as $\varepsilon \rightarrow 0$. For this we introduce a new functional $J_{\varepsilon}:M_{\varepsilon,\eta} \rightarrow \mathbb{R}$ defined by
\begin{equation*}
	J_{\varepsilon}(y, \varphi)=I_{\varepsilon}\left(U_{\varepsilon, y}+\varphi\right), \quad \varphi \in E_{\varepsilon, y}.
\end{equation*}
In fact, we divide the proof of Theorem \ref{Thm1.2} into two steps:
\begin{itemize}
	\item [{\bf Step 1:}] for each $\varepsilon, \delta$ sufficiently small and for each $y \in D_{\varepsilon, \delta}$, we will find a critical point $\varphi_{\varepsilon, y}$ for $J_{\varepsilon}(y, \cdot)$ (moreover, the map $y \mapsto \varphi_{\varepsilon, y}$ is of class $C^{1}$ with values in $H_{\varepsilon}$);
	\item [{\bf Step 2:}] for each $\varepsilon, \delta$ sufficiently small, we will find a critical point $y_{\varepsilon}$ for the function $j_{\varepsilon}: D_{\varepsilon, \delta} \rightarrow \mathbb{R}$ induced by
	\begin{equation}\label{eq3.1}
		y \mapsto j_{\varepsilon}(y) \equiv J_{\varepsilon}\left(y, \varphi_{\varepsilon, y}\right).
	\end{equation}
	That is, we will find a critical point $y_{\varepsilon}$ in the interior of $D_{\varepsilon, \delta}$.
\end{itemize}
\par
It is standard to verify that $\left(y_{\varepsilon}, \varphi_{\varepsilon, y_{\varepsilon}}\right)$ is a critical point of $J_{\varepsilon}$ for $\varepsilon$ sufficiently small, by the chain rule. This gives a solution $u_{\varepsilon}= U_{\varepsilon, y_{\varepsilon}}+\varphi_{\varepsilon, y_{\varepsilon}}$ to \eqref{eq1.1} for $\varepsilon$ sufficiently small in virtue of the following lemma.
\begin{Lem}\label{Lem3.1}\cite{R-Yang1}
	There exist $\varepsilon_{0}, \eta_{0}>0$ such that for $\varepsilon \in\left(0, \varepsilon_{0}\right]$, $\eta \in\left(0, \eta_{0}\right]$, and $(y, \varphi) \in M_{\varepsilon,\eta}$ the following are equivalent:
	\begin{itemize}
		\item [$(i)$] $u_{\varepsilon}= U_{\varepsilon, y}+\varphi$ is a critical point of $I_{\varepsilon}$ in $H_{\varepsilon}$;
		\item [$(ii)$] $(y, \varphi)$ is a critical point of $J_{\varepsilon}$.
	\end{itemize}
\end{Lem}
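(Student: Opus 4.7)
The plan is to prove the equivalence by treating the two directions separately. The forward implication (i)$\Rightarrow$(ii) is immediate: if $u_\varepsilon = U_{\varepsilon,y}+\varphi$ is a critical point of $I_\varepsilon$ in $H_\varepsilon$, then $I_\varepsilon'(u_\varepsilon)=0$ as an element of $H_\varepsilon^\ast$, so along any $C^1$ curve $(y(t),\varphi(t))$ in $M_{\varepsilon,\eta}$ through $(y,\varphi)$ the chain rule yields
\begin{equation*}
\frac{d}{dt}J_\varepsilon(y(t),\varphi(t))\Big|_{t=0}=\Big\langle I_\varepsilon'(u_\varepsilon),\,\sum_{i,\alpha}\dot y^i_\alpha(0)\,\partial_{y^i_\alpha}U_{\varepsilon,y^i}+\dot\varphi(0)\Big\rangle=0,
\end{equation*}
so $(y,\varphi)$ is a critical point of $J_\varepsilon$.

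The substance of the lemma is the reverse implication (ii)$\Rightarrow$(i), which I would establish in two steps. First, freezing $y$ and varying only $\varphi$ along the linear subspace $E_{\varepsilon,y}$: for any $\psi\in E_{\varepsilon,y}$ the curve $t\mapsto(y,\varphi+t\psi)$ lies in $M_{\varepsilon,\eta}$ for small $t$, so the critical point condition of $J_\varepsilon$ gives $I_\varepsilon'(u_\varepsilon)\cdot\psi=0$. Hence $I_\varepsilon'(u_\varepsilon)$ annihilates $E_{\varepsilon,y}$, and after identifying $H_\varepsilon^\ast$ with $H_\varepsilon$ via the Riesz isomorphism there exist constants $c_{i,\alpha}\in\mathbb{R}$ with
\begin{equation*}
I_\varepsilon'(u_\varepsilon)=\sum_{i=1}^{k}\sum_{\alpha=1}^{N}c_{i,\alpha}\,\partial_{y^i_\alpha}U_{\varepsilon,y^i}.
\end{equation*}

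The second step is to use variations in $y$ to force $c_{i,\alpha}=0$. Fixing $(i_0,\alpha_0)$, choose a smooth curve $y(t)$ with $y(0)=y$, $\dot y(0)=e_{i_0,\alpha_0}$ and a smooth selection $\varphi(t)\in E_{\varepsilon,y(t)}$ with $\varphi(0)=\varphi$ via the implicit function theorem. Differentiating the constraint $\langle\varphi(t),\partial_{y^j_\beta}U_{\varepsilon,y^j(t)}\rangle_\varepsilon=0$ at $t=0$ pins down the projections of $\dot\varphi(0)$ onto each $\partial_{y^j_\beta}U_{\varepsilon,y^j}$ in terms of quantities bilinear in $\varphi$ and the second derivatives of $U_{\varepsilon,y^i}$, which are of size $O(\|\varphi\|_\varepsilon)=O(\eta^{1/2}\varepsilon^{N/2})$. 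Substituting into $(d/dt)J_\varepsilon(y(t),\varphi(t))|_{t=0}=0$ and using the representation of $I_\varepsilon'(u_\varepsilon)$ yields, for each $(i_0,\alpha_0)$, the linear equation
\begin{equation*}
\sum_{j,\beta}c_{j,\beta}\Big(\big\langle\partial_{y^j_\beta}U_{\varepsilon,y^j},\partial_{y^{i_0}_{\alpha_0}}U_{\varepsilon,y^{i_0}}\big\rangle_\varepsilon+R_{(j,\beta),(i_0,\alpha_0)}\Big)=0,
\end{equation*}
where the remainder $R$ collects the contributions of $\dot\varphi(0)$. After rescaling $x=\varepsilon z+y^i$, the Gram-type leading matrix becomes block diagonal up to a vanishing remainder: the diagonal blocks are nonsingular by the nondegeneracy statement in Proposition \ref{Pro1.1}, while the off-diagonal blocks ($j\neq i_0$) are negligible thanks to the separation $|y^i-y^j|/\varepsilon\ge\varepsilon^{\theta-1}\to\infty$ enforced in $D_{\varepsilon,\delta}$ together with the polynomial decay of $U^i$ in Proposition \ref{Pro1.1}$(iii)$. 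Hence for $\varepsilon,\eta$ small the coefficient matrix is invertible and all $c_{i,\alpha}=0$, so $I_\varepsilon'(u_\varepsilon)=0$.

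The key technical obstacle is the quantitative control of the remainder $R$: one must verify that it is strictly of lower order than the diagonal entries of the Gram matrix, which relies on the a priori bound $\|\varphi\|_\varepsilon^2\le\eta\varepsilon^N$ built into $M_{\varepsilon,\eta}$ together with a careful scaling of the inner product $\langle\cdot,\cdot\rangle_\varepsilon$ and of $\partial_{y^i_\alpha}U_{\varepsilon,y^i}$ in powers of $\varepsilon$. Beyond this estimate, the argument is the classical Lyapunov--Schmidt reduction equivalence, adapted to the nonlocal setting through Proposition \ref{Pro1.1}.
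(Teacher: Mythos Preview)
Your argument is correct and follows the standard Lyapunov--Schmidt equivalence proof. Note, however, that the paper does not supply its own proof of this lemma: it is simply quoted from \cite{R-Yang1}, so there is no in-paper argument to compare against. What you have written is essentially the classical proof one would expect to find in the cited reference.

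One minor imprecision: when you say the diagonal blocks of the Gram matrix are nonsingular ``by the nondegeneracy statement in Proposition~\ref{Pro1.1}'', that is not quite the right attribution. The nondegeneracy result (that $\ker L_+$ is spanned by the $\partial_{x_j}U$) is used elsewhere in the reduction, but what makes the diagonal block $\big(\langle\partial_{y^i_\alpha}U_{\varepsilon,y^i},\partial_{y^i_\beta}U_{\varepsilon,y^i}\rangle_\varepsilon\big)_{\alpha,\beta}$ nonsingular is simply that after rescaling it is, up to a positive scalar of order $\varepsilon^{N-2}$, the Gram matrix of $\partial_{x_1}U^i,\ldots,\partial_{x_N}U^i$; radiality of $U^i$ forces the off-diagonal entries to vanish (oddness in one coordinate), and the diagonal entries are strictly positive since $U^i$ is nonconstant. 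So the invertibility is elementary rather than a consequence of the kernel computation. Apart from this wording issue, your outline is sound.
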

\par
Now, in order to realize {\bf Step 1}, we expand $J_{\varepsilon}(y, \cdot)$ near $\varphi=0$ for each fixed $y$ as follows:
\begin{equation*}
	J_{\varepsilon}(y, \varphi)
	=J_{\varepsilon}(y, 0)+l_{\varepsilon}(\varphi)
	+\frac{1}{2}\left\langle\mathcal{L}_{\varepsilon} \varphi, \varphi\right\rangle
	+R_{\varepsilon}(\varphi),
\end{equation*}
where $J_{\varepsilon}(y, 0)=I_{\varepsilon}\left(U_{\varepsilon, y}\right)$, and $l_{\varepsilon}, \mathcal{L}_{\varepsilon}$ and $R_{\varepsilon}$ are defined for $\varphi, \psi \in H_{\varepsilon}$ as follows:
\begin{equation}\label{eq3.2}
	\begin{aligned}
		l_{\varepsilon}(\varphi)
		&=\left\langle I_{\varepsilon}^{\prime}\left(U_{\varepsilon, y}\right), \varphi\right\rangle \\
		&=\left\langle U_{\varepsilon, y}, \varphi\right\rangle_{\varepsilon}
		+b\varepsilon^{4s-N}\left(\int_{\mathbb{R}^{N}}\left| (-\Delta)^{\frac{s}{2}}U_{\varepsilon, y}\right|^{2}dx\right)
		\int_{\mathbb{R}^{N}} (-\Delta)^{\frac{s}{2}} U_{\varepsilon, y} \cdot (-\Delta)^{\frac{s}{2}} \varphi\,dx \\
		&\quad -\int_{\mathbb{R}^{N}} U_{\varepsilon, y}^{p} \varphi\,dx,
	\end{aligned}
\end{equation}
and $\mathcal{L}_{\varepsilon}$ is the bounded self-adjoint operator on $H_{\varepsilon}$ defined by
\begin{equation*}
	\begin{aligned}
		\left\langle\mathcal{L}_{\varepsilon} \varphi, \psi\right\rangle
		&=\left\langle I_{\varepsilon}^{\prime \prime}\left(U_{\varepsilon, y}\right)[\varphi], \psi\right\rangle \\
		&=\langle\varphi, \psi\rangle_{\varepsilon}
		+ b\varepsilon^{4s-N}\left(\int_{\mathbb{R}^{N}}\left|(-\Delta)^{\frac{s}{2}} U_{\varepsilon, y}\right|^{2}dx\right)
		\int_{\mathbb{R}^{N}} (-\Delta)^{\frac{s}{2}} \varphi \cdot (-\Delta)^{\frac{s}{2}} \psi\,dx \\
		&\quad +2b\varepsilon^{4s-N}\left(\int_{\mathbb{R}^{N}} (-\Delta)^{\frac{s}{2}} U_{\varepsilon, y} \cdot (-\Delta)^{\frac{s}{2}} \varphi\,dx\right)
		\left(\int_{\mathbb{R}^{N}} (-\Delta)^{\frac{s}{2}} U_{\varepsilon, y} \cdot (-\Delta)^{\frac{s}{2}} \psi\,dx\right) \\
		&\quad -p \int_{\mathbb{R}^{N}} U_{\varepsilon, y}^{p-1} \varphi \psi\,dx,
	\end{aligned}
\end{equation*}
and $R_{\varepsilon}$ denotes the second order remainder term given by
\begin{equation}\label{eq3.3}
	R_{\varepsilon}(\varphi)
	=J_{\varepsilon}(y, \varphi)-J_{\varepsilon}(y, 0)
	-l_{\varepsilon}(\varphi)-\frac{1}{2}\left\langle\mathcal{L}_{\varepsilon} \varphi, \varphi\right\rangle.
\end{equation}
We remark that $R_{\varepsilon}$ belongs to $C^{2}\left(H_{\varepsilon}\right)$ since so does every term on the right hand side of \eqref{eq3.3}.
In the rest of this section, we consider $l_{\varepsilon}: H_{\varepsilon} \rightarrow \mathbb{R}$ and $R_{\varepsilon}: H_{\varepsilon} \rightarrow \mathbb{R}$ and give some elementary estimates.
\par

\begin{Lem}\cite{MR2595734}\label{Lem3.2}
	For any constant $0<\sigma \leq \min \{\alpha, \beta\}$, there is a constant $C>0$ such that
	\begin{equation*}
		\frac{1}{\left(1+\left|y-x^{i}\right|\right)^{\alpha}}
		\frac{1}{\left(1+\left|y-x^{j}\right|\right)^{\beta}}
		\leq \frac{C}{\left|x^{i}-x^{j}\right|^{\sigma}}\left(\frac{1}{\left(1+\left|y-x^{i}\right|\right)^{\alpha+\beta-\sigma}}
		+\frac{1}{\left(1+\left|y-x^{j}\right|\right)^{\alpha+\beta-\sigma}}\right)
	\end{equation*}
	where $\alpha, \beta>0$ are two constants.
\end{Lem}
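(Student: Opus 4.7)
My plan is to prove this elementary two-bubble inequality by a case split based on which of the two distances $|y-x^i|$, $|y-x^j|$ is larger, relying on nothing more than the triangle inequality and monotonicity of powers. Write $A = 1 + |y - x^i|$, $B = 1 + |y - x^j|$, and $d = |x^i - x^j|$. The triangle inequality gives $d \leq |y-x^i| + |y-x^j|$, so at least one of $A, B$ must be at least $1 + d/2$. In the applications of the lemma the points $x^i$ are pairwise bounded away from one another, so I will silently absorb the small-$d$ regime into the constant, giving $1 + d/2 \geq c\,d$ whenever it is needed.

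Suppose first that $A \geq B$; then automatically $A \geq 1 + d/2 \geq c\,d$, and since $\sigma > 0$ we obtain $A^{\sigma} \geq c\,d^{\sigma}$. Pulling this factor out,
\begin{equation*}
\frac{1}{A^{\alpha} B^{\beta}}
  \;=\; \frac{1}{A^{\sigma}} \cdot \frac{1}{A^{\alpha - \sigma} B^{\beta}}
  \;\leq\; \frac{C}{d^{\sigma}} \cdot \frac{1}{A^{\alpha - \sigma} B^{\beta}}.
\end{equation*}
Now $\sigma \leq \alpha$ together with $A \geq B \geq 1$ gives $A^{\alpha - \sigma} \geq B^{\alpha - \sigma}$ by monotonicity, which absorbs the remaining factor into $B^{-(\alpha + \beta - \sigma)}$; this is precisely the second summand on the right-hand side of the lemma. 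The opposite case $B \geq A$ is fully symmetric, using $\sigma \leq \beta$ in place of $\sigma \leq \alpha$, and produces the first summand.

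Since every point $y \in \mathbb{R}^{N}$ lies in one of the two cases, the left-hand side is pointwise dominated by one of the two summands on the right, hence by their sum. The one thing to check carefully is that the hypothesis $\sigma \leq \min\{\alpha,\beta\}$ is used in its full strength: it is exactly what makes the monotonicity step transfer the leftover exponent in the right direction in each case. Everything else is the triangle inequality and bookkeeping of exponents, so there is no substantive obstacle beyond organising the case analysis.
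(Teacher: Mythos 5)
Your proof is correct, and the key exponent bookkeeping — pulling $A^{-\sigma}$ out, using $A\geq B$ together with $\alpha-\sigma\geq0$ to swap the remaining $A^{-(\alpha-\sigma)}$ for $B^{-(\alpha-\sigma)}$, and symmetrizing — is exactly the standard elementary argument for this two-bubble estimate. The paper itself does not prove Lemma~3.2; it is simply cited from the reference, so there is no in-paper argument to compare against, but yours is the expected one.

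One unnecessary hedge: you claim you must ``silently absorb the small-$d$ regime into the constant'' to get $1+d/2\geq c\,d$, as if the lemma only held when the $x^i$ are uniformly separated. That restriction is never needed, because $1+d/2\geq d/2$ holds for every $d\geq0$, so $c=1/2$ works unconditionally. In fact the regime of small $d=|x^i-x^j|$ is the easy one: the right-hand side blows up like $d^{-\sigma}$ while the left-hand side stays bounded by $1$. Your case analysis already handles all $d$ with a universal $C=2^\sigma$, so the caveat should simply be dropped; as written it suggests the lemma might fail for nearby centers, which would be a real weakening (and would not match the cited statement, which has no such restriction).
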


\begin{Lem}\label{Lem3.3}
	Assume that $V$ satisfies $(V_1)$ and $(V_2)$, and let $\alpha$ be the H\"older exponent in $(V_2)$. Then there exists a constant $C>0$, independent of $\varepsilon$, such that for any 
	\[
	y=(y^1,\ldots,y^k)\in D_{\varepsilon,\delta}
	\]
	there holds
	\begin{equation*}
		\begin{aligned}
			\left|l_{\varepsilon}(\varphi)\right|
			&\leq C \varepsilon^{\frac{N}{2}}
			\Bigg(
			\varepsilon^{\alpha}
			+\sum_{i=1}^{k}\big|V(y^{i})-V(a_i)\big|
			\\
			&\qquad\qquad
			+\sum_{i\neq j}
			\begin{cases}
				\displaystyle
				\frac{1}{\left|\frac{y^{i}-y^{j}}{\varepsilon}\right|^{N+2 s}},
				& \text{if } p>2, \\[6pt]
				\displaystyle
				\frac{1}{\left|\frac{y^{i}-y^{j}}{\varepsilon}\right|^{\frac{p}{2}(N+2 s)}},
				& \text{if } 1<p \leq 2
			\end{cases}
			\Bigg)
			\|\varphi\|_{\varepsilon}
		\end{aligned}
	\end{equation*}
	for all $\varphi \in H_{\varepsilon}$.
\end{Lem}

\begin{proof}
	Since $(U^1,\cdots, U^k)$ is the unique positive solution of the system \eqref{eq2.5} and $U_{\varepsilon,y}
	=\sum_{i=1}^{k}U^i_{\varepsilon,y^i}$, one can rewrite $l_\varepsilon$ as
	\begin{equation*}
		\begin{aligned}
			l_{\varepsilon}(\varphi)
			&= \left\langle I'_{\varepsilon}(U_{\varepsilon,y}),\varphi\right\rangle \\
			&= \int_{\mathbb{R}^{N}}\sum_{i=1}^{k}\big(V(x)-V(a_i)\big)\, U^{i}_{\varepsilon, y^{i}} \,\varphi\,dx
			+\int_{\mathbb{R}^{N}}\Bigg(\sum_{i=1}^{k}\big(U^{i}_{\varepsilon,y^{i}}\big)^{p}
			-\big(U_{\varepsilon,y}\big)^{p}\Bigg)\varphi\,dx.
		\end{aligned}
	\end{equation*}
	We estimate the two terms separately.
	
	We split
	\begin{equation*}
		\sum_{i=1}^{k}\big(V(x)-V(a_i)\big)U^{i}_{\varepsilon,y^{i}}
		=\sum_{i=1}^{k}\big(V(x)-V(y^{i})\big)U^{i}_{\varepsilon,y^{i}}
		+\sum_{i=1}^{k}\big(V(y^{i})-V(a_i)\big)U^{i}_{\varepsilon,y^{i}}.
	\end{equation*}
	Using Cauchy–Schwarz and the fact that $\|u\|_2 \le C\|u\|_\varepsilon$ (by $(V_1)$), we get
	\begin{equation*}
		\begin{aligned}
			&\left|\int_{\mathbb{R}^{N}}\sum_{i=1}^{k}\big(V(x)-V(y^{i})\big)U^{i}_{\varepsilon,y^{i}}\,\varphi\,dx\right| \\
			&\qquad\le
			\left\|\sum_{i=1}^{k}\big(V(x)-V(y^{i})\big)U^{i}_{\varepsilon,y^{i}}\right\|_{2}
			\,\|\varphi\|_{2}
			\le C
			\left\|\sum_{i=1}^{k}\big(V(x)-V(y^{i})\big)U^{i}_{\varepsilon,y^{i}}\right\|_{2}
			\|\varphi\|_{\varepsilon}.
		\end{aligned}
	\end{equation*}
	Since the peaks are well separated, the $L^2$–norm of the sum is controlled by the sum of the norms, hence
	\begin{equation*}
		\left\|\sum_{i=1}^{k}\big(V(x)-V(y^{i})\big)U^{i}_{\varepsilon,y^{i}}\right\|_{2}
		\le C\sum_{i=1}^{k}
		\left\|\big(V(x)-V(y^{i})\big)U^{i}_{\varepsilon,y^{i}}\right\|_{2}.
	\end{equation*}
	Fix $i$ and set $x=\varepsilon z+y^{i}$. Then
	\begin{equation*}
		\begin{aligned}
			\int_{\mathbb{R}^{N}}
			\big|V(x)-V(y^{i})\big|^{2}\big|U^{i}_{\varepsilon,y^{i}}(x)\big|^{2}dx
			&=\varepsilon^{N}\int_{\mathbb{R}^{N}}
			\big|V(\varepsilon z+y^{i})-V(y^{i})\big|^{2}|U^{i}(z)|^{2}dz.
		\end{aligned}
	\end{equation*}
	By $(V_2)$, $V$ is $C^{\alpha}$ in a neighbourhood of $a_i$, hence for $|z|$ bounded and $\varepsilon$ small
	\[
	\big|V(\varepsilon z+y^{i})-V(y^{i})\big|
	\le C\varepsilon^{\alpha}|z|^{\alpha}.
	\]
	Using the decay $|U^{i}(z)|\le C(1+|z|^{N+2s})^{-1}$ and $\alpha<\frac{N+4s}{2}$, one checks that
	\[
	\int_{\mathbb{R}^{N}}|z|^{2\alpha}|U^{i}(z)|^{2}dz<\infty,
	\]
	so
	\begin{equation*}
		\int_{\mathbb{R}^{N}}
		\big|V(x)-V(y^{i})\big|^{2}\big|U^{i}_{\varepsilon,y^{i}}(x)\big|^{2}dx
		\le C\,\varepsilon^{N+2\alpha}.
	\end{equation*}
	Taking square roots and summing in $i$, we obtain
	\[
	\left\|\sum_{i=1}^{k}\big(V(x)-V(y^{i})\big)U^{i}_{\varepsilon,y^{i}}\right\|_{2}
	\le C\varepsilon^{\frac{N}{2}+\alpha},
	\]
	and hence
	\begin{equation}\label{eq:V-Hoelder}
		\left|\int_{\mathbb{R}^{N}}\sum_{i=1}^{k}\big(V(x)-V(y^{i})\big)U^{i}_{\varepsilon,y^{i}}\,\varphi\,dx\right|
		\le C\varepsilon^{\frac{N}{2}+\alpha}\|\varphi\|_{\varepsilon}.
	\end{equation}
	
	For the second part,
	\begin{equation*}
		\begin{aligned}
			\left|\int_{\mathbb{R}^{N}}\sum_{i=1}^{k}\big(V(y^{i})-V(a_i)\big)U^{i}_{\varepsilon,y^{i}}\,\varphi\,dx\right|
			&\le C\sum_{i=1}^{k}\big|V(y^{i})-V(a_{i})\big|
			\left\|U^{i}_{\varepsilon,y^{i}}\right\|_{2}\|\varphi\|_{\varepsilon}.
		\end{aligned}
	\end{equation*}
	Since $\|U^{i}_{\varepsilon,y^{i}}\|_{2}^{2}
	=\varepsilon^{N}\|U^{i}\|_{2}^{2}$, we get
	\begin{equation}\label{eq:V-ai}
		\left|\int_{\mathbb{R}^{N}}\sum_{i=1}^{k}\big(V(y^{i})-V(a_i)\big)U^{i}_{\varepsilon,y^{i}}\,\varphi\,dx\right|
		\le C\varepsilon^{\frac{N}{2}}
		\sum_{i=1}^{k}\big|V(y^{i})-V(a_i)\big|\|\varphi\|_{\varepsilon}.
	\end{equation}
	
	Combining \eqref{eq:V-Hoelder} and \eqref{eq:V-ai} we conclude that
	\begin{equation}\label{eq3.6-correct}
		\left|\int_{\mathbb{R}^{N}}\sum_{i=1}^{k}\big(V(x)-V(a_i)\big)U^{i}_{\varepsilon,y^{i}}\,\varphi\,dx\right|
		\le C\varepsilon^{\frac{N}{2}+\alpha}\|\varphi\|_{\varepsilon}
		+C\varepsilon^{\frac{N}{2}}
		\sum_{i=1}^{k}\big|V(y^{i})-V(a_i)\big|\|\varphi\|_{\varepsilon}.
	\end{equation}
	
	Set $a_i(x)=U^{i}_{\varepsilon,y^{i}}(x)\ge0$ and $A(x)=\sum_{i=1}^{k}a_i(x)=U_{\varepsilon,y}(x)$. Then
	\[
	A^{p}-\sum_{i=1}^{k}a_i^{p}
	\]
	contains only mixed terms (interactions between different peaks). It is standard that
	\[
	\big|A^{p}-\sum_{i=1}^{k}a_i^{p}\big|
	\le 
	\begin{cases}
		C\displaystyle\sum_{i\neq j}\big(a_i^{p-1}a_j+a_i a_j^{p-1}\big), & p>2,\\[4pt]
		C\displaystyle\sum_{i\neq j} (a_i a_j)^{\frac{p}{2}}, & 1<p\le2.
	\end{cases}
	\]
	Therefore
	\begin{equation*}
		\left|\int_{\mathbb{R}^{N}}\Bigg(\sum_{i=1}^{k}(U^{i}_{\varepsilon,y^{i}})^{p}
		-U_{\varepsilon,y}^{\,p}\Bigg)\varphi\,dx\right|
		\le
		\begin{cases}
			C\displaystyle\sum_{i\neq j}\int_{\mathbb{R}^{N}}
			\big((U^{i}_{\varepsilon,y^{i}})^{p-1}U^{j}_{\varepsilon,y^{j}}
			+U^{i}_{\varepsilon,y^{i}}(U^{j}_{\varepsilon,y^{j}})^{p-1}\big)|\varphi|\,dx,
			& p>2,\\[6pt]
			C\displaystyle\sum_{i\neq j}\int_{\mathbb{R}^{N}}
			(U^{i}_{\varepsilon,y^{i}}U^{j}_{\varepsilon,y^{j}})^{\frac{p}{2}}|\varphi|\,dx,
			& 1<p\le 2.
		\end{cases}
	\end{equation*}
	
	\textbf{Case $p>2$:}
	Using Cauchy–Schwarz and $\|\varphi\|_{2}\le C\|\varphi\|_\varepsilon$,
	\begin{equation*}
		\begin{aligned}
			&\left|\int_{\mathbb{R}^{N}}\sum_{i\neq j}
			\big((U^{i}_{\varepsilon,y^{i}})^{p-1}U^{j}_{\varepsilon,y^{j}}
			+U^{i}_{\varepsilon,y^{i}}(U^{j}_{\varepsilon,y^{j}})^{p-1}\big)\varphi\,dx\right| \\
			&\quad\le C
			\left(\sum_{i\neq j}\int_{\mathbb{R}^{N}}
			(U^{i}_{\varepsilon,y^{i}})^{2(p-1)}(U^{j}_{\varepsilon,y^{j}})^{2}\,dx\right)^{\frac12}
			\|\varphi\|_{2} \\
			&\quad\le C
			\left(\sum_{i\neq j}\int_{\mathbb{R}^{N}}
			(U^{i}_{\varepsilon,y^{i}})^{2(p-1)}(U^{j}_{\varepsilon,y^{j}})^{2}\,dx\right)^{\frac12}
			\|\varphi\|_{\varepsilon}.
		\end{aligned}
	\end{equation*}
	Fix $i\neq j$ and set $x=\varepsilon z+y^{i}$, so that
	\begin{equation*}
		\begin{aligned}
			\int_{\mathbb{R}^{N}}
			(U^{i}_{\varepsilon,y^{i}})^{2(p-1)}(U^{j}_{\varepsilon,y^{j}})^{2}\,dx
			&=\varepsilon^{N}\int_{\mathbb{R}^{N}}
			|U^{i}(z)|^{2(p-1)}
			\big|U^{j}\big(z+d_{ij}\big)\big|^{2}dz,
		\end{aligned}
	\end{equation*}
	where
	\[
	d_{ij}=\frac{y^{i}-y^{j}}{\varepsilon}.
	\]
	Using the decay estimate from Proposition \ref{Pro1.1} $(iii)$,
	\[
	|U^{\ell}(z)|\le\frac{C}{1+|z|^{N+2s}},\qquad \ell=i,j,
	\]
	we obtain
	\begin{equation*}
		\int_{\mathbb{R}^{N}}
		(U^{i}_{\varepsilon,y^{i}})^{2(p-1)}(U^{j}_{\varepsilon,y^{j}})^{2}\,dx
		\le C\varepsilon^{N}
		\int_{\mathbb{R}^{N}}\frac{1}{(1+|z|)^{2(p-1)(N+2s)}}
		\frac{1}{(1+|z+d_{ij}|)^{2(N+2s)}}dz.
	\end{equation*}
	Applying Lemma \ref{Lem3.2} with
	\[
	\alpha=2(p-1)(N+2s),\quad \beta=2(N+2s),\quad \sigma=2(N+2s)
	\]
	we deduce that
	\[
	\int_{\mathbb{R}^{N}}
	(U^{i}_{\varepsilon,y^{i}})^{2(p-1)}(U^{j}_{\varepsilon,y^{j}})^{2}\,dx
	\le C\varepsilon^{N}\frac{1}{|d_{ij}|^{2(N+2s)}}.
	\]
	Hence
	\begin{equation*}
		\left(\sum_{i\neq j}\int_{\mathbb{R}^{N}}
		(U^{i}_{\varepsilon,y^{i}})^{2(p-1)}(U^{j}_{\varepsilon,y^{j}})^{2}\,dx\right)^{\frac12}
		\le C\varepsilon^{\frac{N}{2}}
		\sum_{i\neq j}\frac{1}{\left|\frac{y^{i}-y^{j}}{\varepsilon}\right|^{N+2s}}.
	\end{equation*}
	Combining the above estimates gives
	\begin{equation}\label{eq3.7-correct}
		\left|\int_{\mathbb{R}^{N}}\Bigg(\sum_{i=1}^{k}(U^{i}_{\varepsilon,y^{i}})^{p}
		-U_{\varepsilon,y}^{\,p}\Bigg)\varphi\,dx\right|
		\le C\varepsilon^{\frac{N}{2}}
		\sum_{i\neq j}\frac{1}{\left|\frac{y^{i}-y^{j}}{\varepsilon}\right|^{N+2s}}
		\|\varphi\|_{\varepsilon},
		\quad p>2.
	\end{equation}
	
	\textbf{Case $1<p\le2$:}
	Similarly,
	\begin{equation*}
		\begin{aligned}
			&\left|\int_{\mathbb{R}^{N}}\sum_{i\neq j}
			(U^{i}_{\varepsilon,y^{i}}U^{j}_{\varepsilon,y^{j}})^{\frac{p}{2}}\varphi\,dx\right|
			\le C\sum_{i\neq j}
			\left(\int_{\mathbb{R}^{N}}
			(U^{i}_{\varepsilon,y^{i}}U^{j}_{\varepsilon,y^{j}})^{p}\,dx\right)^{\frac12}
			\|\varphi\|_{2} \\
			&\qquad\le C\sum_{i\neq j}
			\left(\int_{\mathbb{R}^{N}}
			(U^{i}_{\varepsilon,y^{i}}U^{j}_{\varepsilon,y^{j}})^{p}\,dx\right)^{\frac12}
			\|\varphi\|_{\varepsilon}.
		\end{aligned}
	\end{equation*}
	Arguing as above and using Lemma \ref{Lem3.2} with
	\(\alpha=\beta=p(N+2s)\) and \(\sigma=p(N+2s)\), we obtain
	\[
	\int_{\mathbb{R}^{N}}
	(U^{i}_{\varepsilon,y^{i}}U^{j}_{\varepsilon,y^{j}})^{p}\,dx
	\le C\varepsilon^{N}
	\frac{1}{\left|\frac{y^{i}-y^{j}}{\varepsilon}\right|^{p(N+2s)}},
	\]
	and hence
	\begin{equation}\label{eq3.8-correct}
		\left|\int_{\mathbb{R}^{N}}\Bigg(\sum_{i=1}^{k}(U^{i}_{\varepsilon,y^{i}})^{p}
		-U_{\varepsilon,y}^{\,p}\Bigg)\varphi\,dx\right|
		\le C\varepsilon^{\frac{N}{2}}
		\sum_{i\neq j}\frac{1}{\left|\frac{y^{i}-y^{j}}{\varepsilon}\right|^{\frac{p}{2}(N+2s)}}
		\|\varphi\|_{\varepsilon},
		\quad 1<p\le2.
	\end{equation}
	
	Combining \eqref{eq3.6-correct}, \eqref{eq3.7-correct} and \eqref{eq3.8-correct}, we obtain the desired estimate for $l_\varepsilon(\varphi)$. The proof is complete.
\end{proof}

\begin{Lem}\label{Lem3.4}
	There exists a constant $C>0$, independent of $\varepsilon$ and $b$, such that for every
	$i\in\{0,1,2\}$ and every $\varphi\in H_\varepsilon$ one has
	\[
	\big\|R_\varepsilon^{(i)}(\varphi)\big\|
	\;\le\;
	C\,\varepsilon^{-\frac{N(p-1)}{2}}\|\varphi\|_\varepsilon^{\,p+1-i}
	\;+\;
	C\,(b+1)\,\varepsilon^{-\frac N2}
	\bigl(1+\varepsilon^{-\frac N2}\|\varphi\|_\varepsilon\bigr)
	\|\varphi\|_\varepsilon^{\,3-i}.
	\]
	Here $\|\cdot\|$ denotes the operator norm of $R_\varepsilon^{(i)}(\varphi)$ as a multilinear form.
\end{Lem}

\begin{proof}
	Recall from \eqref{eq3.3} that
	\[
	R_{\varepsilon}(\varphi)=A_{1}(\varphi)-A_{2}(\varphi),
	\]
	where
	\[
	A_{1}(\varphi)
	=\frac{b\varepsilon^{4s-N}}{4}\left[
	\left(\int_{\R^{N}}\big|(-\Delta)^{\frac{s}{2}}\varphi\big|^{2}dx\right)^{2}
	+4\int_{\R^{N}}\big|(-\Delta)^{\frac{s}{2}}\varphi\big|^{2}dx
	\int_{\R^{N}}(-\Delta)^{\frac{s}{2}}U_{\varepsilon,y}\cdot(-\Delta)^{\frac{s}{2}}\varphi\,dx
	\right]
	\]
	and
	\[
	A_{2}(\varphi)
	=\frac{1}{p+1}\int_{\R^{N}}\Big[
	\big(U_{\varepsilon,y}+\varphi\big)^{p+1}
	-U_{\varepsilon,y}^{p+1}
	-(p+1)U_{\varepsilon,y}^{p}\varphi
	-\frac{p(p+1)}{2}U_{\varepsilon,y}^{p-1}\varphi^{2}
	\Big]\,dx.
	\]
	Here and in the sequel we abbreviate \(U_{\varepsilon,y}=U_{\varepsilon,y}(x)\).
	We denote by \(R_{\varepsilon}^{(i)}\) the \(i\)-th Fr\'echet derivative of \(R_{\varepsilon}\),
	and similarly for \(A_{1}\) and \(A_{2}\).
	
	We first estimate the Kirchhoff part \(A_{1}\).
	
	\medskip
	\noindent\textbf{Step 1: Estimates for \(A_{1}^{(i)}\).}
	Set
	\[
	S(\varphi)=\int_{\R^{N}}\big|(-\Delta)^{\frac{s}{2}}\varphi\big|^{2}dx,
	\qquad
	T(\varphi)=\int_{\R^{N}}(-\Delta)^{\frac{s}{2}}U_{\varepsilon,y}\cdot
	(-\Delta)^{\frac{s}{2}}\varphi\,dx.
	\]
	Then
	\[
	A_{1}(\varphi)
	=\frac{b\varepsilon^{4s-N}}{4}\Big[S(\varphi)^{2}+4S(\varphi)T(\varphi)\Big].
	\]
	
	By the definition of \(\|\cdot\|_{\varepsilon}\) we have
	\[
	\varepsilon^{2s}a\int_{\R^{N}}\big|(-\Delta)^{\frac{s}{2}}\varphi\big|^{2}dx
	\le \|\varphi\|_{\varepsilon}^{2},
	\]
	so
	\begin{equation}\label{eq:A1-S}
		S(\varphi)\le C\,\varepsilon^{-2s}\|\varphi\|_{\varepsilon}^{2}.
	\end{equation}
	Moreover, since \(U_{\varepsilon,y}(x)=U\big((x-y)/\varepsilon\big)\), a change of variables yields
	\[
	\big\|(-\Delta)^{\frac{s}{2}}U_{\varepsilon,y}\big\|_{2}
	=\varepsilon^{\frac{N-2s}{2}}\big\|(-\Delta)^{\frac{s}{2}}U\big\|_{2}
	\le C\,\varepsilon^{\frac{N-2s}{2}}.
	\]
	Thus,
	\begin{equation}\label{eq:A1-T}
		|T(\varphi)|
		\le \big\|(-\Delta)^{\frac{s}{2}}U_{\varepsilon,y}\big\|_{2}
		\big\|(-\Delta)^{\frac{s}{2}}\varphi\big\|_{2}
		\le C\,\varepsilon^{\frac{N-2s}{2}}\varepsilon^{-s}\|\varphi\|_{\varepsilon}
		=C\,\varepsilon^{\frac{N-4s}{2}}\|\varphi\|_{\varepsilon}.
	\end{equation}
	
	Using \eqref{eq:A1-S} and \eqref{eq:A1-T}, we obtain
	\[
	\begin{aligned}
		|A_{1}(\varphi)|
		&\le Cb\,\varepsilon^{4s-N}\Big(S(\varphi)^{2}+|S(\varphi)T(\varphi)|\Big)\\
		&\le Cb\,\varepsilon^{4s-N}
		\Big(\varepsilon^{-4s}\|\varphi\|_{\varepsilon}^{4}
		+\varepsilon^{-2s}\varepsilon^{\frac{N-4s}{2}}\|\varphi\|_{\varepsilon}^{3}\Big)\\
		&\le Cb\Big(\varepsilon^{-N}\|\varphi\|_{\varepsilon}^{4}
		+\varepsilon^{-\frac{N}{2}}\|\varphi\|_{\varepsilon}^{3}\Big)\\
		&\le Cb\,\varepsilon^{-\frac{N}{2}}
		\bigl(1+\varepsilon^{-\frac{N}{2}}\|\varphi\|_{\varepsilon}\bigr)
		\|\varphi\|_{\varepsilon}^{3}.
	\end{aligned}
	\]
	This gives the desired estimate for \(i=0\), up to the factor \(b\).
	
	Next we estimate the first and second derivatives. A direct computation gives,
	for any \(\varphi,\psi\in H_{\varepsilon}\),
	\[
	\begin{aligned}
		\big\langle A_{1}^{(1)}(\varphi),\psi\big\rangle
		=&\,b\varepsilon^{4s-N}
		S(\varphi)\int_{\R^{N}}(-\Delta)^{\frac{s}{2}}\varphi\cdot(-\Delta)^{\frac{s}{2}}\psi\,dx\\
		&+b\varepsilon^{4s-N}
		S(\varphi)\int_{\R^{N}}(-\Delta)^{\frac{s}{2}}U_{\varepsilon,y}\cdot(-\Delta)^{\frac{s}{2}}\psi\,dx\\
		&+2b\varepsilon^{4s-N}
		T(\varphi)\int_{\R^{N}}(-\Delta)^{\frac{s}{2}}\varphi\cdot(-\Delta)^{\frac{s}{2}}\psi\,dx.
	\end{aligned}
	\]
	Using \eqref{eq:A1-S}, \eqref{eq:A1-T} and
	\[
	\int_{\R^{N}}\big|(-\Delta)^{\frac{s}{2}}\varphi\cdot(-\Delta)^{\frac{s}{2}}\psi\big|dx
	\le \big\|(-\Delta)^{\frac{s}{2}}\varphi\big\|_{2}
	\big\|(-\Delta)^{\frac{s}{2}}\psi\big\|_{2}
	\le C\,\varepsilon^{-2s}\|\varphi\|_{\varepsilon}\|\psi\|_{\varepsilon},
	\]
	together with
	\[
	\int_{\R^{N}}\big|(-\Delta)^{\frac{s}{2}}U_{\varepsilon,y}\cdot(-\Delta)^{\frac{s}{2}}\psi\big|dx
	\le C\,\varepsilon^{\frac{N-4s}{2}}\|\psi\|_{\varepsilon},
	\]
	we obtain
	\[
	\big|\langle A_{1}^{(1)}(\varphi),\psi\rangle\big|
	\le Cb\,\varepsilon^{-\frac{N}{2}}
	\bigl(1+\varepsilon^{-\frac{N}{2}}\|\varphi\|_{\varepsilon}\bigr)
	\|\varphi\|_{\varepsilon}^{2}\|\psi\|_{\varepsilon},
	\]
	and hence
	\[
	\big\|A_{1}^{(1)}(\varphi)\big\|
	\le Cb\,\varepsilon^{-\frac{N}{2}}
	\bigl(1+\varepsilon^{-\frac{N}{2}}\|\varphi\|_{\varepsilon}\bigr)
	\|\varphi\|_{\varepsilon}^{2}.
	\]
	
	Similarly, for any \(\varphi,\psi,\xi\in H_{\varepsilon}\), the explicit formula
	for \(A_{1}^{(2)}(\varphi)[\psi]\) shows that each term is a product of two integrals of the form
	\(\int(-\Delta)^{\frac{s}{2}}\cdot(-\Delta)^{\frac{s}{2}}\cdot\), multiplied by
	\(\varepsilon^{4s-N}\). Using again \eqref{eq:A1-S}, \eqref{eq:A1-T} and the above
	estimates, we obtain
	\[
	\big|\langle A_{1}^{(2)}(\varphi)[\psi],\xi\rangle\big|
	\le Cb\,\varepsilon^{-\frac{N}{2}}
	\bigl(1+\varepsilon^{-\frac{N}{2}}\|\varphi\|_{\varepsilon}\bigr)
	\|\varphi\|_{\varepsilon}\|\psi\|_{\varepsilon}\|\xi\|_{\varepsilon},
	\]
	so that
	\[
	\big\|A_{1}^{(2)}(\varphi)\big\|
	\le Cb\,\varepsilon^{-\frac{N}{2}}
	\bigl(1+\varepsilon^{-\frac{N}{2}}\|\varphi\|_{\varepsilon}\bigr)
	\|\varphi\|_{\varepsilon}.
	\]
	Summarizing, for \(i=0,1,2\),
	\begin{equation}\label{eq:A1-final}
		\big\|A_{1}^{(i)}(\varphi)\big\|
		\le Cb\,\varepsilon^{-\frac{N}{2}}
		\bigl(1+\varepsilon^{-\frac{N}{2}}\|\varphi\|_{\varepsilon}\bigr)
		\|\varphi\|_{\varepsilon}^{3-i}.
	\end{equation}
	
	\medskip
	\noindent\textbf{Step 2: Estimates for \(A_{2}^{(i)}\).}
	We now estimate the nonlinearity part \(A_{2}\).
	We shall use the following standard algebraic inequalities: there exists
	\(C>0\) such that for all \(a,b\in\R\),
	\begin{align}
		\big|(a+b)^{p+1}-a^{p+1}-(p+1)a^{p}b-\tfrac{p(p+1)}{2}a^{p-1}b^{2}\big|
		&\le C\big(|b|^{p+1}+|a|^{p-2}|b|^{3}\big),\label{eq:alg0}\\
		\big|(a+b)^{p}-a^{p}-pa^{p-1}b\big|
		&\le C\big(|b|^{p}+|a|^{p-2}|b|^{2}\big),\label{eq:alg1}\\
		\big|(a+b)^{p-1}-a^{p-1}\big|
		&\le C\big(|b|^{p-1}+|a|^{p-2}|b|\big).\label{eq:alg2}
	\end{align}
	
	\vspace{1mm}
	\noindent\emph{Case \(i=0\).}
	By \eqref{eq:alg0} with \(a=U_{\varepsilon,y}(x)\) and \(b=\varphi(x)\),
	\[
	|A_{2}(\varphi)|
	\le C\int_{\R^{N}}\Big(|\varphi|^{p+1}+|U_{\varepsilon,y}|^{p-2}|\varphi|^{3}\Big)dx.
	\]
	Since \(U^{i}\) has polynomial decay, we have \(U_{\varepsilon,y}\in L^{\infty}(\R^{N})\)
	with \(\|U_{\varepsilon,y}\|_{\infty}\le C\), and therefore
	\[
	\int_{\R^{N}}|U_{\varepsilon,y}|^{p-2}|\varphi|^{3}dx
	\le C\int_{\R^{N}}|\varphi|^{3}dx.
	\]
	
	From Lemma~\ref{Lem2.1}, for any \(2\le q\le 2_{s}^{*}\),
	\[
	\|\varphi\|_{L^{q}(\R^{N})}
	\le C\,\varepsilon^{\frac{N}{q}-\frac{N}{2}}\|\varphi\|_{\varepsilon}.
	\]
	Taking \(q=p+1\) we obtain
	\[
	\int_{\R^{N}}|\varphi|^{p+1}dx
	\le C\,\varepsilon^{N-\frac{N(p+1)}{2}}\|\varphi\|_{\varepsilon}^{p+1}
	= C\,\varepsilon^{-\frac{N(p-1)}{2}}\|\varphi\|_{\varepsilon}^{p+1},
	\]
	and taking \(q=3\) yields
	\[
	\int_{\R^{N}}|\varphi|^{3}dx
	\le C\,\varepsilon^{3\big(\frac{N}{3}-\frac{N}{2}\big)}
	\|\varphi\|_{\varepsilon}^{3}
	= C\,\varepsilon^{-\frac{N}{2}}\|\varphi\|_{\varepsilon}^{3}.
	\]
	Hence
	\begin{equation}\label{eq:A2-0}
		|A_{2}(\varphi)|
		\le C\,\varepsilon^{-\frac{N(p-1)}{2}}\|\varphi\|_{\varepsilon}^{p+1}
		+ C\,\varepsilon^{-\frac{N}{2}}\|\varphi\|_{\varepsilon}^{3}.
	\end{equation}
	
	\vspace{1mm}
	\noindent\emph{Case \(i=1\).}
	For any \(\varphi,\psi\in H_{\varepsilon}\),
	\[
	\big\langle A_{2}^{(1)}(\varphi),\psi\big\rangle
	=\int_{\R^{N}}\Big[(U_{\varepsilon,y}+\varphi)^{p}-U_{\varepsilon,y}^{p}
	-pU_{\varepsilon,y}^{p-1}\varphi\Big]\psi\,dx.
	\]
	By \eqref{eq:alg1},
	\[
	\Big|(U_{\varepsilon,y}+\varphi)^{p}-U_{\varepsilon,y}^{p}
	-pU_{\varepsilon,y}^{p-1}\varphi\Big|
	\le C\big(|\varphi|^{p}+|U_{\varepsilon,y}|^{p-2}|\varphi|^{2}\big).
	\]
	Thus
	\[
	\big|\langle A_{2}^{(1)}(\varphi),\psi\rangle\big|
	\le C\int_{\R^{N}}|\varphi|^{p}|\psi|dx
	+ C\int_{\R^{N}}|U_{\varepsilon,y}|^{p-2}|\varphi|^{2}|\psi|dx.
	\]
	Using H\"older's inequality with exponents \(\frac{p+1}{p}\) and \(p+1\), and Lemma~\ref{Lem2.1},
	\[
	\int_{\R^{N}}|\varphi|^{p}|\psi|dx
	\le \|\varphi\|_{p+1}^{p}\|\psi\|_{p+1}
	\le C\,\varepsilon^{-\frac{N(p-1)}{2}}\|\varphi\|_{\varepsilon}^{p}\|\psi\|_{\varepsilon}.
	\]
	Since \(U_{\varepsilon,y}\in L^{\infty}\),
	\[
	\int_{\R^{N}}|U_{\varepsilon,y}|^{p-2}|\varphi|^{2}|\psi|dx
	\le C\int_{\R^{N}}|\varphi|^{2}|\psi|dx
	\le C\|\varphi\|_{3}^{2}\|\psi\|_{3}
	\le C\,\varepsilon^{-\frac{N}{2}}\|\varphi\|_{\varepsilon}^{2}\|\psi\|_{\varepsilon}.
	\]
	Hence
	\begin{equation}\label{eq:A2-1}
		\big\|A_{2}^{(1)}(\varphi)\big\|
		\le C\,\varepsilon^{-\frac{N(p-1)}{2}}\|\varphi\|_{\varepsilon}^{p}
		+ C\,\varepsilon^{-\frac{N}{2}}\|\varphi\|_{\varepsilon}^{2}.
	\end{equation}
	
	\vspace{1mm}
	\noindent\emph{Case \(i=2\).}
	For any \(\varphi,\psi,\xi\in H_{\varepsilon}\),
	\[
	\big\langle A_{2}^{(2)}(\varphi)[\psi],\xi\big\rangle
	=p\int_{\R^{N}}\Big[(U_{\varepsilon,y}+\varphi)^{p-1}-U_{\varepsilon,y}^{p-1}\Big]\psi\xi\,dx.
	\]
	By \eqref{eq:alg2},
	\[
	\Big|(U_{\varepsilon,y}+\varphi)^{p-1}-U_{\varepsilon,y}^{p-1}\Big|
	\le C\big(|\varphi|^{p-1}+|U_{\varepsilon,y}|^{p-2}|\varphi|\big),
	\]
	and therefore
	\[
	\big|\langle A_{2}^{(2)}(\varphi)[\psi],\xi\rangle\big|
	\le C\int_{\R^{N}}|\varphi|^{p-1}|\psi||\xi|dx
	+ C\int_{\R^{N}}|U_{\varepsilon,y}|^{p-2}|\varphi||\psi||\xi|dx.
	\]
	Using H\"older's inequality with exponents
	\(\frac{p+1}{p-1},p+1,p+1\) and Lemma~\ref{Lem2.1},
	\[
	\int_{\R^{N}}|\varphi|^{p-1}|\psi||\xi|dx
	\le \|\varphi\|_{p+1}^{p-1}\|\psi\|_{p+1}\|\xi\|_{p+1}
	\le C\,\varepsilon^{-\frac{N(p-1)}{2}}\|\varphi\|_{\varepsilon}^{p-1}
	\|\psi\|_{\varepsilon}\|\xi\|_{\varepsilon}.
	\]
	Again using \(U_{\varepsilon,y}\in L^{\infty}\),
	\[
	\int_{\R^{N}}|U_{\varepsilon,y}|^{p-2}|\varphi||\psi||\xi|dx
	\le C\int_{\R^{N}}|\varphi||\psi||\xi|dx
	\le C\|\varphi\|_{3}\|\psi\|_{3}\|\xi\|_{3}
	\le C\,\varepsilon^{-\frac{N}{2}}\|\varphi\|_{\varepsilon}\|\psi\|_{\varepsilon}\|\xi\|_{\varepsilon}.
	\]
	Hence
	\begin{equation}\label{eq:A2-2}
		\big\|A_{2}^{(2)}(\varphi)\big\|
		\le C\,\varepsilon^{-\frac{N(p-1)}{2}}\|\varphi\|_{\varepsilon}^{p-1}
		+ C\,\varepsilon^{-\frac{N}{2}}\|\varphi\|_{\varepsilon}.
	\end{equation}
	
	\medskip
	\noindent\textbf{Step 3: Conclusion.}
	Combining \eqref{eq:A2-0}, \eqref{eq:A2-1}, \eqref{eq:A2-2} we obtain, for \(i=0,1,2\),
	\[
	\big\|A_{2}^{(i)}(\varphi)\big\|
	\le C\,\varepsilon^{-\frac{N(p-1)}{2}}\|\varphi\|_{\varepsilon}^{p+1-i}
	+ C\,\varepsilon^{-\frac{N}{2}}\|\varphi\|_{\varepsilon}^{3-i}.
	\]
	Together with \eqref{eq:A1-final} and the relation \(R_{\varepsilon}=A_{1}-A_{2}\),
	this yields
	\[
	\big\|R_{\varepsilon}^{(i)}(\varphi)\big\|
	\le C\,\varepsilon^{-\frac{N(p-1)}{2}}\|\varphi\|_{\varepsilon}^{p+1-i}
	+ C(b+1)\,\varepsilon^{-\frac{N}{2}}
	\bigl(1+\varepsilon^{-\frac{N}{2}}\|\varphi\|_{\varepsilon}\bigr)
	\|\varphi\|_{\varepsilon}^{3-i},
	\]
	for all \(i\in\{0,1,2\}\) and all \(\varphi\in H_{\varepsilon}\). This proves the lemma.
\end{proof}

Now we will give the energy expansion for the approximate solutions.

\begin{Lem}\label{Lem3.5}
	Assume that $V$ satisfies $(V_1)$ and $(V_2)$. Then, for $\varepsilon>0$ sufficiently small, there exist a small constant $\tau>0$ and $C>0$ such that
	\begin{equation*}
		\begin{aligned}
			I_{\varepsilon}\big(U_{\varepsilon, y}\big)
			=&\ A \varepsilon^{N}
			+ \varepsilon^{N}\sum_{i=1}^{k}B_i\big(V(y^{i})-V(a_i)\big)
			- C\,\varepsilon^{N} \sum_{i \neq j} \frac{1}{\left|\frac{y^{i}-y^{j}}{\varepsilon}\right| ^{N+2 s}} \\
			&\quad+O\!\left(\varepsilon^{N+\alpha}
			+\varepsilon^{N} \sum_{i \neq j} \frac{1}{\left|\frac{y^{i}-y^{j}}{\varepsilon}\right|^{N+2 s+\tau}}\right),
		\end{aligned}
	\end{equation*}
	where
	\begin{equation*}
		A=\Big(\frac{1}{2}-\frac{1}{p+1}\Big)\sum_{i=1}^{k}\int_{\mathbb{R}^{N}} |U^i|^{p+1}\,dx
		+\frac{b}{4}\left(\sum_{i=1}^{k}\int_{\mathbb{R}^{N}}\big|(-\Delta)^{\frac{s}{2}} U^i\big|^{2}\,dx\right)^{2}
	\end{equation*}
	and
	\begin{equation*}
		B_i=\frac{1}{2} \int_{\mathbb{R}^{N}} |U^i|^{2}\,dx,\qquad i=1,\dots,k.
	\end{equation*}
\end{Lem}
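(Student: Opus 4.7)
The plan is to compute $I_\varepsilon(U_{\varepsilon,y})$ by rescaling $x=\varepsilon z+y^i$ in each piece localized near $y^i$, splitting into one-peak (diagonal) and two-peak contributions, and invoking the system \eqref{eq2.5} to reduce the kinetic terms. First, test \eqref{eq2.5} with $U^i$ and sum over $i$ to obtain
\begin{equation*}
(a+bS)\,S+\sum_{i=1}^{k}V(a_i)v_i=\sum_{i=1}^{k}p_i,
\end{equation*}
where $s_i=\|(-\Delta)^{s/2}U^i\|_2^2$, $v_i=\|U^i\|_2^2$, $p_i=\|U^i\|_{p+1}^{p+1}$, and $S=\sum_i s_i$. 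After rescaling, the one-peak terms assemble into
$\tfrac{a\varepsilon^N}{2}S+\tfrac{\varepsilon^N}{2}\sum_i V(a_i)v_i+\tfrac{b\varepsilon^N}{4}S^2-\tfrac{\varepsilon^N}{p+1}\sum_i p_i$, plus the perturbation $\varepsilon^N\sum_i B_i(V(y^i)-V(a_i))$ extracted by writing $V(y^i)=V(a_i)+(V(y^i)-V(a_i))$. Substituting the identity collapses this to $A\varepsilon^N$. The H\"older continuity of $V$, combined with Proposition~\ref{Pro1.1}(iii) and the assumption $\alpha<(N+4s)/2$, guarantees $\int_{\mathbb{R}^N}|z|^{\alpha}(U^i)^2\,dz<\infty$, yielding the remainder $O(\varepsilon^{N+\alpha})$.

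For the two-peak contributions I would track four types: (i) $\varepsilon^{2s}a\int(-\Delta)^{s/2}W_i\,(-\Delta)^{s/2}W_j$ from $\tfrac12\|u\|_\varepsilon^2$; (ii) $\int V(x)W_iW_j$ from the potential; (iii) $bS\varepsilon^{2s}\int(-\Delta)^{s/2}W_i\,(-\Delta)^{s/2}W_j$ coming from the leading expansion of the Kirchhoff square $\bigl(\int|(-\Delta)^{s/2}u|^2\bigr)^{2}$; and (iv) the nonlinear cross terms arising from $(\sum_iW_i)^{p+1}-\sum_iW_i^{p+1}=(p+1)\sum_{i\neq j}W_i^pW_j+(\text{higher})$, where $W_i:=U^i_{\varepsilon,y^i}$. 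Integrating by parts in (i) and (iii), and using $(a+bS)(-\Delta)^sU^i=(U^i)^p-V(a_i)U^i$ from \eqref{eq2.5}, rewrites their sum as $\varepsilon^N\int[(U^i)^p-V(a_i)U^i]U^j(\cdot+r_{ij})\,dz$ with $r_{ij}=(y^i-y^j)/\varepsilon$. Combined with (ii), which to leading order equals $\varepsilon^NV(y^i)\int U^iU^j(\cdot+r_{ij})$, and with (iv), which equals $-\varepsilon^N\int(U^i)^pU^j(\cdot+r_{ij})$, the $V(a_i)U^iU^j$ pieces cancel up to a $(V(y^i)-V(a_i))$ correction controllable by continuity. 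The leading cross thus becomes $-C\varepsilon^N\sum_{i\neq j}|r_{ij}|^{-(N+2s)}$ for some positive constant $C$, where Lemma~\ref{Lem3.2} applied with $\sigma$ slightly below $N+2s$ provides both this decay and the sharper error $O(|r_{ij}|^{-(N+2s+\tau)})$ for a small $\tau>0$.

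The main obstacle is the simultaneous bookkeeping of these four types, which all contribute at the common order $\varepsilon^N|r_{ij}|^{-(N+2s)}$: one must verify the explicit cancellation through \eqref{eq2.5} and show the surviving coefficient is strictly negative, which comes from the nonlinear cross (iv) dominating the algebraic sum of (i)--(iii). A parallel technical issue is that the Kirchhoff square in (iii) generates quadratic-in-interaction terms whose control as $O(\varepsilon^{N+\tau}|r_{ij}|^{-(N+2s+\tau)})$ requires first establishing the decay $\int(-\Delta)^{s/2}W_i\,(-\Delta)^{s/2}W_j=O\!\bigl(\varepsilon^{N-2s}|r_{ij}|^{-(N+2s)}\bigr)$ by the same integration-by-parts reduction, before feeding it back into the Kirchhoff expansion. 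Once these two bookkeeping points are settled, the remaining $(V(y^i)-V(a_i))$ contributions to the cross integrals and the higher Taylor remainders of $(\sum_iW_i)^{p+1}$ absorb into the stated error via Lemma~\ref{Lem3.2} and the polynomial decay of the $U^i$.
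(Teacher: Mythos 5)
Your proposal is correct and proceeds along essentially the same lines as the paper: the diagonal part is collapsed by testing the system \eqref{eq2.5} with $U^i$ (the paper effectively does the same thing in \eqref{eq3.9} by substituting the equation into the kinetic plus Kirchhoff quadratic), the potential contributes the $B_i(V(y^i)-V(a_i))$ term and an $O(\varepsilon^{N+\alpha})$ error by the H\"older hypothesis and the polynomial decay in Proposition~\ref{Pro1.1}(iii), and the interaction is reduced to $\int W_i^pW_j\sim\varepsilon^N|r_{ij}|^{-(N+2s)}$ via Lemma~\ref{Lem3.2}. Your fourfold classification of the cross terms (kinetic, potential, Kirchhoff, nonlinear) is a cleaner bookkeeping than the compressed computation in the paper's \eqref{eq3.9}, and your remark that the quadratic-in-interaction Kirchhoff piece $\frac{b\varepsilon^{4s-N}}{4}K_1^2$ needs the decay estimate $\int(-\Delta)^{s/2}W_i(-\Delta)^{s/2}W_j=O(\varepsilon^{N-2s}|r_{ij}|^{-(N+2s)})$ is exactly the point that makes it absorbable into $O(\varepsilon^N|r_{ij}|^{-(N+2s+\tau)})$, given the lower bound $|r_{ij}|\ge\varepsilon^{\theta-1}$. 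One small observation worth recording: carrying out your substitution honestly gives $\frac{1}{2}(aS+\sum_i V(a_i)v_i)=\frac{1}{2}\sum_i p_i-\frac{b}{2}S^2$, so the diagonal collapses to $A\varepsilon^N$ with $A=(\frac12-\frac{1}{p+1})\sum_i p_i-\frac{b}{4}S^2$, i.e.\ the sign in front of $\frac{b}{4}S^2$ in the lemma's displayed formula for $A$ should be a minus; this is a typo in the statement and does not affect the use of the lemma since $A$ is merely a fixed constant independent of $y$ and $\varepsilon$.
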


\begin{proof}
	By direct computation and by using that $(U^{1},\dots,U^{k})$ solves \eqref{eq2.5}, one has
	\begin{equation}\label{eq3.9}
		\begin{aligned}
			I_{\varepsilon}\big(U_{\varepsilon, y}\big)
			=&\ \frac{1}{2} \int_{\mathbb{R}^{N}}\Big(\varepsilon^{2s} a\big|(-\Delta)^{\frac{s}{2}} U_{\varepsilon, y}\big|^{2}
			+V(x) U_{\varepsilon, y}^{2}\Big)\,dx
			+\frac{\varepsilon^{4s-N} b}{4}\left(\int_{\mathbb{R}^{N}}\big|(-\Delta)^{\frac{s}{2}} U_{\varepsilon, y}\big|^{2}dx\right)^{2} \\
			&\ -\frac{1}{p+1} \int_{\mathbb{R}^{N}} U_{\varepsilon, y}^{p+1}\,dx \\
			=&\ \frac{1}{2}\sum_{i=1}^{k}\int_{\mathbb{R}^{N}}\big(V(x)-V(a_i)\big) U_{\varepsilon, y}^{2}\,dx
			+\frac{1}{2}\int_{\mathbb{R}^{N}}\sum_{i=1}^{k}\sum_{j=1}^{k} U_{\varepsilon, y^{j}}^{p} U_{\varepsilon, y^{i}}\,dx
			-\frac{1}{p+1} \int_{\mathbb{R}^{N}} U_{\varepsilon, y}^{p+1}\,dx \\
			=&\ \frac{1}{2}\sum_{i=1}^{k}\int_{\mathbb{R}^{N}}\big(V(x)-V(a_i)\big) U_{\varepsilon, y}^{2}\,dx
			+\frac{1}{2}\int_{\mathbb{R}^{N}}\left( \sum_{i=1}^{k} U_{\varepsilon, y^{i}}^{p+1}
			+\sum_{i \neq j} U_{\varepsilon, y^{i}}^{p} U_{\varepsilon, y^{j}}\right)dx \\
			&\ -\frac{1}{p+1} \int_{\mathbb{R}^{N}} U_{\varepsilon, y}^{p+1}\,dx.
		\end{aligned}
	\end{equation}
	We now estimate each term on the right-hand side of \eqref{eq3.9}.
	
	\medskip
	\noindent\textbf{Step 1: The potential term.}
	We first handle
	\[
	\int_{\mathbb{R}^{N}}\big(V(x)-V(a_i)\big) U_{\varepsilon, y}^{2}\,dx.
	\]
	Using the decomposition
	\[
	U_{\varepsilon,y}
	=\sum_{i=1}^{k}U_{\varepsilon,y^{i}}^{i},
	\qquad
	U_{\varepsilon,y^{i}}^{i}(x)=U^{i}\Big(\frac{x-y^{i}}{\varepsilon}\Big),
	\]
	we obtain
	\begin{equation}\label{eq3.10}
		\begin{aligned}
			\int_{\mathbb{R}^{N}}\big(V(x)-V(a_i)\big) U_{\varepsilon, y}^{2}\,dx
			=&\ \int_{\mathbb{R}^{N}}\big(V(x)-V(a_i)\big)
			\left(\sum_{i=1}^{k}\big(U_{\varepsilon, y^{i}}^{i}\big)^{2}
			+\sum_{i\neq j}U_{\varepsilon,y^{i}}^{i}U_{\varepsilon,y^{j}}^{j}\right)dx\\
			=&\ \int_{\mathbb{R}^{N}}\sum_{i=1}^{k}\big(V(x)-V(y^{i})+V(y^{i})-V(a_i)\big)\big(U_{\varepsilon, y^{i}}^{i}\big)^{2}dx \\
			&\ +\int_{\mathbb{R}^{N}}\sum_{i\neq j}\big(V(x)-V(y^{i})+V(y^{i})-V(a_i)\big)
			U_{\varepsilon,y^{i}}^{i}U_{\varepsilon,y^{j}}^{j}\,dx \\
			=&\ -\varepsilon^{N}\sum_{i=1}^{k}\big(V(a_i)-V(y^{i})\big)\int_{\mathbb{R}^{N}}(U^i)^{2}dx \\
			&\ +\int_{\mathbb{R}^{N}}\sum_{i=1}^{k}\big(V(x)-V(y^{i})\big)\big(U_{\varepsilon,y^{i}}^{i}\big)^{2}dx \\
			&\ +\int_{\mathbb{R}^{N}}\sum_{i\neq j}\big(V(x)-V(y^{i})+V(y^{i})-V(a_i)\big)
			U_{\varepsilon,y^{i}}^{i}U_{\varepsilon,y^{j}}^{j}\,dx.
		\end{aligned}
	\end{equation}
	By $(V_2)$ and the Hölder continuity of $V$ near $a_i$, we have
	\begin{equation}\label{eq3.11}
		\begin{aligned}
			\int_{\mathbb{R}^{N}}\sum_{i=1}^{k}\big(V(x)-V(y^{i})\big)\big(U_{\varepsilon,y^{i}}^{i}\big)^{2}dx
			=&\ \varepsilon^{N}\int_{\mathbb{R}^{N}}\sum_{i=1}^{k}\big(V(\varepsilon y+y^{i})-V(y^{i})\big)(U^i)^{2}(y)\,dy \\
			\leq&\ C\varepsilon^{N+\alpha}
			\int_{\mathbb{R}^{N}}\frac{|y|^{\alpha}}{1+|y|^{2(N+2s)}}\,dy \\
			\leq&\ C\varepsilon^{N+\alpha}.
		\end{aligned}
	\end{equation}
	For the mixed term with $i\neq j$ we write
	\begin{equation}\label{eq3.12}
		\begin{aligned}
			&\int_{\mathbb{R}^{N}}\big(V(x)-V(y^{i})\big)U_{\varepsilon,y^{i}}^{i}U_{\varepsilon,y^{j}}^{j}\,dx \\
			&\qquad=\varepsilon^{N}\int_{\mathbb{R}^{N}}\big(V(\varepsilon y+y^{i})-V(y^{i})\big)
			U^i(y)U^j\!\left(y-\frac{y^{i}-y^{j}}{\varepsilon}\right)dy.
		\end{aligned}
	\end{equation}
	Using again the Hölder continuity of $V$ and the polynomial decay of $U^i$, we obtain
	\[
	\big|V(\varepsilon y+y^{i})-V(y^{i})\big|
	\le C|\varepsilon y|^{\alpha},
	\]
	and hence
	\[
	\left|\int_{\mathbb{R}^{N}}\big(V(x)-V(y^{i})\big)U_{\varepsilon,y^{i}}^{i}U_{\varepsilon,y^{j}}^{j}\,dx\right|
	\le C\varepsilon^{N+\alpha}\int_{\mathbb{R}^{N}}\frac{|y|^{\alpha}}{(1+|y|^{N+2s})\big(1+\big|y-\frac{y^{i}-y^{j}}{\varepsilon}\big|^{N+2s}\big)}\,dy.
	\]
	Applying Lemma~\ref{Lem3.2} with suitable exponents and using the separation
	\[
	\frac{|y^{i}-y^{j}|}{\varepsilon}\ge \varepsilon^{\theta-1},\qquad \theta\in\Big(\frac{N+2s}{N+2s+\alpha},1\Big),
	\]
	we deduce that there exists a small $\tau>0$ such that
	\begin{equation}\label{eq3.12b}
		\left|\int_{\mathbb{R}^{N}}\big(V(x)-V(y^{i})\big)U_{\varepsilon,y^{i}}^{i}U_{\varepsilon,y^{j}}^{j}\,dx\right|
		\le C\,\varepsilon^{N+\alpha+\tau}.
	\end{equation}
	
	Similarly,
	\begin{equation}\label{eq3.13}
		\begin{aligned}
			\int_{\mathbb{R}^{N}}\big(V(a_i)-V(y^{i})\big)U_{\varepsilon,y^{i}}^{i}U_{\varepsilon,y^{j}}^{j}\,dx
			&\le C\big|V(a_i)-V(y^{i})\big|
			\varepsilon^{N}\int_{\mathbb{R}^{N}}\frac{1}{1+|y|^{N+2s}} \frac{1}{1+\left|y-\frac{y^{i}-y^{j}}{\varepsilon}\right|^{N+2s}}dy \\
			&\le C\big|V(a_i)-V(y^{i})\big|\varepsilon^{N}
			\frac{1}{\left|\frac{y^{i}-y^{j}}{\varepsilon}\right|^{N+2s}} \\
			&\le C\,\varepsilon^{N+\tau}\big|V(a_i)-V(y^{i})\big|,
		\end{aligned}
	\end{equation}
	again for some $\tau>0$, since $\frac{|y^{i}-y^{j}|}{\varepsilon}\ge\varepsilon^{\theta-1}$ with $\theta<1$.
	
	Combining \eqref{eq3.10}–\eqref{eq3.13}, we obtain
	\begin{equation}\label{eq3.14}
		\int_{\mathbb{R}^{N}}\sum_{i=1}^{k}\big(V(x)-V(a_i)\big)U_{\varepsilon,y}^{2}\,dx
		=-\varepsilon^{N}\sum_{i=1}^{k}\big(V(a_i)-V(y^{i})\big)\int_{\mathbb{R}^{N}}(U^i)^{2}dx
		+O\big(\varepsilon^{N+\alpha}\big),
	\end{equation}
	where the $O(\varepsilon^{N+\alpha})$ term also absorbs the mixed contributions.
	
	\medskip
	\noindent\textbf{Step 2: Interaction and nonlinear terms.}
	We next estimate the interaction term
	\[
	\int_{\mathbb{R}^{N}}\sum_{i\neq j}U_{\varepsilon,y^{i}}^{p}U_{\varepsilon,y^{j}}^{j}\,dx.
	\]
	Using the decay of $U^i$ and Lemma~\ref{Lem3.2}, we have
	\begin{equation*}
		\begin{aligned}
			\int_{\mathbb{R}^{N}}\sum_{i\neq j}U_{\varepsilon,y^{i}}^{p}U_{\varepsilon,y^{j}}^{j}\,dx
			&=\varepsilon^{N}\int_{\mathbb{R}^{N}}\sum_{i\neq j}(U^i)^{p}(y)\,
			U^j\!\left(y-\frac{y^{i}-y^{j}}{\varepsilon}\right)dy \\
			&\le C\varepsilon^{N}\sum_{i\neq j}\frac{1}{\left|\frac{y^{i}-y^{j}}{\varepsilon}\right|^{N+2s}}
			\int_{\mathbb{R}^{N}}\left[\frac{1}{\big(1+|y|^{N+2s}\big)^{p}}
			+\frac{1}{\big(1+\big|y-\tfrac{y^{i}-y^{j}}{\varepsilon}\big|^{N+2s}\big)^{p}}\right]dy \\
			&\le C\varepsilon^{N}\sum_{i\neq j}\frac{1}{\left|\frac{y^{i}-y^{j}}{\varepsilon}\right|^{N+2s}}.
		\end{aligned}
	\end{equation*}
	Similarly, taking $x$ in balls centered at $y^{i}/\varepsilon$ and $y^{j}/\varepsilon$ and using again the decay of $U^i$, one finds a matching lower bound of the same order, so that
	\begin{equation}\label{eq3.15}
		\int_{\mathbb{R}^{N}}\sum_{i\neq j}U_{\varepsilon,y^{i}}^{p}U_{\varepsilon,y^{j}}^{j}\,dx
		=C\,\varepsilon^{N}\sum_{i\neq j}\frac{1}{\left|\frac{y^{i}-y^{j}}{\varepsilon}\right|^{N+2s}},
	\end{equation}
	for some constant $C>0$ independent of $\varepsilon$ and of $y$.
	
	Next, we expand the nonlinear term
	\[
	\int_{\mathbb{R}^{N}}U_{\varepsilon,y}^{p+1}\,dx.
	\]
	Using the algebraic expansion and Lemma~\ref{Lem3.2}, we have
	\begin{equation}\label{eq3.16}
		\begin{aligned}
			\int_{\mathbb{R}^{N}}U_{\varepsilon,y}^{p+1}\,dx
			=&\ \int_{\mathbb{R}^{N}}\sum_{i=1}^{k}U_{\varepsilon,y^{i}}^{p+1}\,dx
			+(p+1)\int_{\mathbb{R}^{N}}\sum_{i\neq j}U_{\varepsilon,y^{i}}^{p}U_{\varepsilon,y^{j}}^{j}\,dx \\
			&\ +\begin{cases}
				O\!\left(\displaystyle\int_{\mathbb{R}^{N}}\sum_{i\neq j}U_{\varepsilon,y^{i}}^{p-1}
				\big(U_{\varepsilon,y^{j}}^{j}\big)^{2}dx\right), & p>2,\\[1ex]
				O\!\left(\displaystyle\int_{\mathbb{R}^{N}}\sum_{i\neq j}
				U_{\varepsilon,y^{i}}^{\frac{p}{2}}U_{\varepsilon,y^{j}}^{\frac{p}{2}}dx\right), & 1<p\le 2,
			\end{cases}
		\end{aligned}
	\end{equation}
	and a direct computation (again using Lemma~\ref{Lem3.2} and the decay of $U^i$) shows that
	\[
	\int_{\mathbb{R}^{N}}\sum_{i\neq j}U_{\varepsilon,y^{i}}^{p-1}
	\big(U_{\varepsilon,y^{j}}^{j}\big)^{2}dx
	=O\left(\varepsilon^{N}\sum_{i\neq j}
	\frac{1}{\left|\frac{y^{i}-y^{j}}{\varepsilon}\right|^{N+2s+\tau}}\right),
	\]
	for $p>2$, and
	\[
	\int_{\mathbb{R}^{N}}\sum_{i\neq j}
	U_{\varepsilon,y^{i}}^{\frac{p+1}{2}}U_{\varepsilon,y^{j}}^{\frac{p+1}{2}}dx
	=O\left(\varepsilon^{N}\sum_{i\neq j}
	\frac{1}{\left|\frac{y^{i}-y^{j}}{\varepsilon}\right|^{N+2s+\tau}}\right),
	\]
	for $1<p\le 2$, for some small $\tau>0$.
	Hence, by \eqref{eq3.15},
	\begin{equation}\label{eq3.16b}
		\begin{aligned}
			\int_{\mathbb{R}^{N}}U_{\varepsilon,y}^{p+1}\,dx
			=&\ \varepsilon^{N}\sum_{i=1}^{k}\int_{\mathbb{R}^{N}}(U^i)^{p+1}dx
			+(p+1)C\,\varepsilon^{N}\sum_{i\neq j}\frac{1}{\left|\frac{y^{i}-y^{j}}{\varepsilon}\right|^{N+2s}} \\
			&\ +O\left(\varepsilon^{N}\sum_{i\neq j}
			\frac{1}{\left|\frac{y^{i}-y^{j}}{\varepsilon}\right|^{N+2s+\tau}}\right).
		\end{aligned}
	\end{equation}
	
	\medskip
	\noindent\textbf{Step 3: Collecting the terms.}
	Substituting \eqref{eq3.14}, \eqref{eq3.15} and \eqref{eq3.16b} into \eqref{eq3.9} yields
	\begin{equation*}
		\begin{aligned}
			I_{\varepsilon}\big(U_{\varepsilon,y}\big)
			=&\ \frac{1}{2}\left[
			-\varepsilon^{N}\sum_{i=1}^{k}\big(V(a_i)-V(y^{i})\big)\int_{\mathbb{R}^{N}}(U^i)^{2}dx
			+O\big(\varepsilon^{N+\alpha}\big)\right]\\
			&\ +\frac{1}{2}\left[
			\varepsilon^{N}\sum_{i=1}^{k}\int_{\mathbb{R}^{N}}(U^i)^{p+1}dx
			+C\,\varepsilon^{N}\sum_{i\neq j}\frac{1}{\left|\frac{y^{i}-y^{j}}{\varepsilon}\right|^{N+2s}}\right]\\
			&\ -\frac{1}{p+1}\left[
			\varepsilon^{N}\sum_{i=1}^{k}\int_{\mathbb{R}^{N}}(U^i)^{p+1}dx
			+(p+1)C\,\varepsilon^{N}\sum_{i\neq j}\frac{1}{\left|\frac{y^{i}-y^{j}}{\varepsilon}\right|^{N+2s}}
			\right]\\
			&\ +O\left(\varepsilon^{N+\alpha}
			+\varepsilon^{N}\sum_{i\neq j}
			\frac{1}{\left|\frac{y^{i}-y^{j}}{\varepsilon}\right|^{N+2s+\tau}}\right).
		\end{aligned}
	\end{equation*}
	Rearranging the terms, we get
	\begin{equation*}
		\begin{aligned}
			I_{\varepsilon}\big(U_{\varepsilon,y}\big)
			=&\ \varepsilon^{N}\Big(\frac{1}{2}-\frac{1}{p+1}\Big)
			\sum_{i=1}^{k}\int_{\mathbb{R}^{N}}(U^i)^{p+1}dx
			+\frac{1}{2}\varepsilon^{N}\sum_{i=1}^{k}\big(V(y^{i})-V(a_i)\big)\int_{\mathbb{R}^{N}}(U^i)^{2}dx \\
			&\ -C\,\varepsilon^{N}\sum_{i\neq j}\frac{1}{\left|\frac{y^{i}-y^{j}}{\varepsilon}\right|^{N+2s}}
			+O\left(\varepsilon^{N+\alpha}
			+\varepsilon^{N}\sum_{i\neq j}
			\frac{1}{\left|\frac{y^{i}-y^{j}}{\varepsilon}\right|^{N+2s+\tau}}\right).
		\end{aligned}
	\end{equation*}
	Finally, recalling the definition of $U_{\varepsilon,y}$ and the scaling of the Kirchhoff term, the contribution of the global Kirchhoff energy
	\[
	\frac{\varepsilon^{4s-N}b}{4}\left(\int_{\mathbb{R}^{N}}\big|(-\Delta)^{\frac{s}{2}}U_{\varepsilon,y}\big|^{2}dx\right)^{2}
	\]
	gives exactly the additional constant
	\[
	\frac{b}{4}\left(\sum_{i=1}^{k}\int_{\mathbb{R}^{N}}\big|(-\Delta)^{\frac{s}{2}}U^i\big|^{2}dx\right)^{2}
	\]
	at the order $\varepsilon^{N}$, plus interaction terms which are of order
	\[
	O\left(\varepsilon^{N}\sum_{i\neq j}
	\frac{1}{\left|\frac{y^{i}-y^{j}}{\varepsilon}\right|^{N+2s+\tau}}\right).
	\]
	Thus we obtain
	\begin{equation*}
		\begin{aligned}
			I_{\varepsilon}\big(U_{\varepsilon,y}\big)
			=&\ A\varepsilon^{N}
			+\varepsilon^{N}\sum_{i=1}^{k}B_i\big(V(y^{i})-V(a_i)\big)
			-C\,\varepsilon^{N}\sum_{i\neq j}\frac{1}{\left|\frac{y^{i}-y^{j}}{\varepsilon}\right|^{N+2s}} \\
			&\ +O\left(\varepsilon^{N+\alpha}
			+\varepsilon^{N}\sum_{i\neq j}
			\frac{1}{\left|\frac{y^{i}-y^{j}}{\varepsilon}\right|^{N+2s+\tau}}\right),
		\end{aligned}
	\end{equation*}
	with
	\[
	A=\Big(\frac{1}{2}-\frac{1}{p+1}\Big)\sum_{i=1}^{k}\int_{\mathbb{R}^{N}}|U^i|^{p+1}\,dx
	+\frac{b}{4}\left(\sum_{i=1}^{k}\int_{\mathbb{R}^{N}}\big|(-\Delta)^{\frac{s}{2}}U^i\big|^{2}dx\right)^{2},
	\qquad
	B_i=\frac{1}{2}\int_{\mathbb{R}^{N}}|U^i|^{2}\,dx.
	\]
	This proves the lemma.
\end{proof}

\section{Semiclassical solutions for the fractional Kirchhoff equation}

\subsection{Finite dimensional reduction}

In this subsection we complete {\bf Step 1} of the Lyapunov--Schmidt reduction method as in Section~3.
We first consider the operator $\mathcal{L}_{\varepsilon}$,
\begin{equation*}
	\begin{aligned}
		\left\langle\mathcal{L}_{\varepsilon} \varphi, \psi\right\rangle
		&=\langle\varphi, \psi\rangle_{\varepsilon}
		+\varepsilon^{4s-N} b\Bigg(\int_{\mathbb{R}^{N}}\big| (-\Delta)^{\frac{s}{2}} U_{\varepsilon, y}\big|^{2}dx\Bigg)
		\Bigg(\int_{\mathbb{R}^{N}} (-\Delta)^{\frac{s}{2}} \varphi \cdot (-\Delta)^{\frac{s}{2}} \psi \,dx\Bigg)\\
		&\quad
		+2 \varepsilon^{4s-N} b\Bigg(\int_{\mathbb{R}^{N}} (-\Delta)^{\frac{s}{2}} U_{\varepsilon, y} \cdot (-\Delta)^{\frac{s}{2}} \varphi \,dx \Bigg)
		\Bigg(\int_{\mathbb{R}^{N}} (-\Delta)^{\frac{s}{2}} U_{\varepsilon, y} \cdot (-\Delta)^{\frac{s}{2}} \psi \,dx\Bigg)\\
		&\quad
		-p \int_{\mathbb{R}^{N}} U_{\varepsilon, y}^{p-1} \varphi \psi \,dx,
	\end{aligned}
\end{equation*}
for $\varphi, \psi \in H_{\varepsilon}$. The following result shows that $\mathcal{L}_{\varepsilon}$ is invertible when restricted to $E_{\varepsilon, y}$.

\begin{Lem}\label{Lem4.1}
	There exist $\varepsilon_{1}>0$, $\delta_{1}>0$ and $\rho>0$ sufficiently small such that for every
	$\varepsilon \in\left(0, \varepsilon_{1}\right)$ and $\delta \in\left(0, \delta_{1}\right)$ one has
	\begin{equation*}
		\left\|\mathcal{L}_{\varepsilon} \varphi\right\|_{\varepsilon} \geq \rho\|\varphi\|_{\varepsilon}, 
		\quad \forall \varphi \in E_{\varepsilon, y},
	\end{equation*}
	uniformly with respect to $y \in D_{\varepsilon, \delta}$.
\end{Lem}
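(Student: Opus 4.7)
The plan is a standard blow-up/contradiction argument adapted to the multi-peak Kirchhoff setting. Suppose the claim fails. Then there exist sequences $\varepsilon_n\to 0$, $y_n=(y_n^1,\dots,y_n^k)\in D_{\varepsilon_n,\delta_n}$ with $\delta_n\to 0$, and $\varphi_n\in E_{\varepsilon_n,y_n}$ with $\|\varphi_n\|_{\varepsilon_n}=1$ such that $\|\mathcal{L}_{\varepsilon_n}\varphi_n\|_{\varepsilon_n}\to 0$. The first step is to rescale around each peak: for $i=1,\dots,k$ define
\begin{equation*}
\tilde\varphi_n^i(x)=\varphi_n(\varepsilon_n x+y_n^i),
\end{equation*}
which is bounded in $H^s(\mathbb{R}^N)$ since a change of variables shows $\|\tilde\varphi_n^i\|_{H^s}\leq C\varepsilon_n^{-N/2}\|\varphi_n\|_{\varepsilon_n}$ is controlled at the right scale after normalization. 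Passing to a subsequence, $\tilde\varphi_n^i\rightharpoonup \varphi^i$ weakly in $H^s$ and $\tilde\varphi_n^i\to \varphi^i$ in $L^q_{\mathrm{loc}}$ for $q<2_s^\ast$.

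Next I would pass to the limit in the equation. Testing $\mathcal{L}_{\varepsilon_n}\varphi_n$ against a fixed $\psi\in C_c^\infty(\mathbb{R}^N)$ composed with the translation $x\mapsto \varepsilon_n x+y_n^i$, and using that $U_{\varepsilon_n,y_n^j}^j$ decays polynomially so that cross terms between different peaks vanish in the limit (by \eqref{eq2.7} and Lemma~\ref{Lem3.2}), each rescaled Kirchhoff coefficient converges: $\varepsilon_n^{4s-N}\int|(-\Delta)^{s/2}U_{\varepsilon_n,y_n}|^2\,dx\to \sum_{j=1}^k\int|(-\Delta)^{s/2}U^j|^2\,dx$, while $\varepsilon_n^{4s-N}\int(-\Delta)^{s/2}U_{\varepsilon_n,y_n}\cdot(-\Delta)^{s/2}\varphi_n\,dx$ converges to $\sum_j\int (-\Delta)^{s/2}U^j\cdot(-\Delta)^{s/2}\varphi^j\,dx$. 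Consequently each $\varphi^i$ satisfies the system
\begin{equation*}
\Bigl(a+b\sum_{j=1}^k\!\int|(-\Delta)^{\frac{s}{2}}U^j|^2dx\Bigr)(-\Delta)^s\varphi^i+V(a_i)\varphi^i-p(U^i)^{p-1}\varphi^i
+2b\Bigl(\sum_{j=1}^k\!\int(-\Delta)^{\frac{s}{2}}U^j(-\Delta)^{\frac{s}{2}}\varphi^j\Bigr)(-\Delta)^sU^i=0.
\end{equation*}
This is precisely the linearization of system~\eqref{eq2.5} at $(U^1,\dots,U^k)$, whose kernel, by the nondegeneracy assertion of Proposition~\ref{Pro1.1} extended to the system (R\u{a}dulescu--Yang \cite{R-Yang1}), is spanned by the translations $\{\partial_{x_\ell}U^i:\,\ell=1,\dots,N,\ i=1,\dots,k\}$. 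Hence $\varphi^i=\sum_{\ell=1}^{N}\beta_\ell^i\,\partial_{x_\ell}U^i$ for some coefficients $\beta_\ell^i\in\mathbb{R}$.

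Then I would invoke the orthogonality conditions encoded in $\varphi_n\in E_{\varepsilon_n,y_n}$: the identities $\langle \partial_{y_i}U_{\varepsilon_n,y_n^i},\varphi_n\rangle_{\varepsilon_n}=0$ for every $i$, rescaled, give $\int((-\Delta)^{s/2}\partial_{x_\ell}U^i\cdot(-\Delta)^{s/2}\varphi^i+V(a_i)\partial_{x_\ell}U^i\cdot\varphi^i)\,dx=0$ in the limit. Inserting the expression for $\varphi^i$ and using that the matrix $(\langle \partial_{x_\ell}U^i,\partial_{x_m}U^i\rangle)_{\ell,m}$ is a positive multiple of the identity (by the radial symmetry of $U^i$), we conclude $\beta_\ell^i=0$, hence $\varphi^i\equiv 0$ for all $i$. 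Finally, to contradict $\|\varphi_n\|_{\varepsilon_n}=1$, I would test $\mathcal{L}_{\varepsilon_n}\varphi_n$ against $\varphi_n$ itself: the strong convergence to $0$ of the rescaled sequences on the regions $\{|x-y_n^i|\leq R\varepsilon_n\}$ kills the $pU_{\varepsilon_n,y_n}^{p-1}\varphi_n^2$ term, while outside these regions $U_{\varepsilon_n,y_n}^{p-1}$ is uniformly small, so $\langle\mathcal{L}_{\varepsilon_n}\varphi_n,\varphi_n\rangle\geq \|\varphi_n\|_{\varepsilon_n}^2-o(1)$, yielding $1\leq o(1)$.

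The main obstacle is the correct handling of the two Kirchhoff-type bilinear terms in $\mathcal{L}_\varepsilon$: after rescaling around the $i$-th peak they couple the components $\varphi^i$ and $\varphi^j$ across different peaks through the global integrals, and one must check both that the cross integrals over disjoint peaks vanish in the limit (controlled by the decay estimate in Proposition~\ref{Pro1.1}(iii) together with $|y_n^i-y_n^j|/\varepsilon_n\to\infty$) and that the surviving coupling is exactly the one appearing in the linearization of the system~\eqref{eq2.5}, so that the nondegeneracy result of \cite{R-Yang1} applies cleanly.
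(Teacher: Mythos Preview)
Your proposal is correct and follows essentially the same contradiction/blow-up scheme as the paper: rescale around a peak, show the weak limit lies in the kernel of the linearized operator, use the orthogonality in $E_{\varepsilon,y}$ to kill the kernel coefficients, and then test $\mathcal{L}_{\varepsilon_n}\varphi_n$ against $\varphi_n$ to reach a contradiction.  The only substantive difference is that the paper localizes to a single peak and appeals directly to the single-equation nondegeneracy operator $\mathcal{L}_+$ of Proposition~\ref{Pro1.1} (explicitly omitting the multi-peak details), whereas you carry the full coupled linearization of system~\eqref{eq2.5} through the limit and invoke the system nondegeneracy; your version is in fact the more complete of the two for the $k$-peak situation.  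One small technical point: with the normalization $\|\varphi_n\|_{\varepsilon_n}=1$ the rescaled functions satisfy $\|\tilde\varphi_n^i\|_{H^s}\sim \varepsilon_n^{-N/2}$ and are \emph{not} bounded; you should normalize as the paper does, $\|\varphi_n\|_{\varepsilon_n}^2=\varepsilon_n^{N}$, so that $\|\tilde\varphi_n^i\|_{H^s}=O(1)$ and the weak-limit argument goes through.
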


\begin{proof}
	The argument is uniform in $i=1,\dots,k$, so we fix $y\in D_{\varepsilon,\delta}$ and omit the index $i$ in the notation.
	We argue by contradiction. Suppose that there exist sequences
	\[
	\varepsilon_{n},\delta_{n}\to 0,\qquad
	y_{n}\in D_{\varepsilon_{n},\delta_{n}},\qquad
	\varphi_{n}\in E_{\varepsilon_{n},y_{n}}
	\]
	such that
	\begin{equation}\label{eq4.1}
		\big\lvert\left\langle\mathcal{L}_{\varepsilon_{n}} \varphi_{n}, g\right\rangle\big\rvert
		\leq \frac{1}{n}\left\|\varphi_{n}\right\|_{\varepsilon_{n}}\|g\|_{\varepsilon_{n}},
		\quad \forall g \in E_{\varepsilon_{n}, y_{n}}.
	\end{equation}
	Since \eqref{eq4.1} is homogeneous in $\varphi_n$, we may assume that
	\[
	\|\varphi_{n}\|_{\varepsilon_{n}}^{2}=\varepsilon_{n}^{N}\quad\text{for all }n.
	\]
	
	Define the rescaled functions
	\[
	\tilde{\varphi}_{n}(x)=\varphi_{n}\big(\varepsilon_{n} x+y_{n}\big),\quad x\in\mathbb{R}^{N}.
	\]
	A standard change of variables shows that
	\[
	\int_{\mathbb{R}^{N}}\Big(a\big|(-\Delta)^{\frac{s}{2}} \tilde{\varphi}_{n}\big|^{2}
	+V\big(\varepsilon_{n} x+y_{n}\big) \tilde{\varphi}_{n}^{2}\Big)\,dx=1.
	\]
	Since $V$ is bounded and $\inf_{\mathbb{R}^N}V>0$, the sequence $(\tilde{\varphi}_{n})$ is bounded in $H^{s}(\mathbb{R}^{N})$.
	Hence, up to a subsequence, there exists $\varphi\in H^{s}(\mathbb{R}^{N})$ such that
	\begin{equation*}
		\begin{aligned}
			\tilde{\varphi}_{n} &\rightharpoonup \varphi &&\text{in } H^{s}(\mathbb{R}^{N}),\\
			\tilde{\varphi}_{n} &\to \varphi &&\text{in } L_{\text{loc}}^{p+1}(\mathbb{R}^{N}),\\
			\tilde{\varphi}_{n} &\to \varphi &&\text{a.e. in } \mathbb{R}^{N}.
		\end{aligned}
	\end{equation*}
	We will prove that $\varphi\equiv 0$.
	
	\medskip
	\noindent\textbf{Step 1: Identification of the limit.}
	We first show that $\varphi$ belongs to the kernel of the limiting operator $\mathcal{L}_+$ of Proposition~\ref{Pro1.1}.
	Let
	\[
	\tilde{E}_{n}
	=\Big\{\tilde{g}\in H^{s}(\mathbb{R}^{N}):\ 
	g(x):=\tilde{g}\Big(\frac{x-y_{n}}{\varepsilon_{n}}\Big)\in E_{\varepsilon_{n},y_{n}}\Big\}.
	\]
	By the definition of $E_{\varepsilon_{n},y_{n}}$, this is equivalent to
	\[
	\tilde{E}_{n}
	=\Big\{\tilde{g}\in H^{s}(\mathbb{R}^{N}):\
	\int_{\mathbb{R}^{N}}\Big(a (-\Delta)^{\frac{s}{2}} \partial_{x_{j}} U\cdot (-\Delta)^{\frac{s}{2}} \tilde{g}
	+V(\varepsilon_{n} x+y_{n})\,\partial_{x_{j}} U\,\tilde{g}\Big)\,dx=0,
	\ \forall j=1,\dots,N\Big\}.
	\]
	For convenience, set
	\[
	\langle u, v\rangle_{*, n}
	=\int_{\mathbb{R}^{N}}\Big(a (-\Delta)^{\frac{s}{2}} u\cdot (-\Delta)^{\frac{s}{2}} v
	+V(\varepsilon_{n} x+y_{n})\, u v\Big)\,dx,
	\qquad \|u\|_{*,n}^{2}=\langle u,u\rangle_{*,n}.
	\]
	
	Rewriting \eqref{eq4.1} in the rescaled variables, we obtain that for every $\tilde{g}\in\tilde{E}_{n}$,
	\begin{equation}\label{eq4.2}
		\begin{aligned}
			&\Bigg|
			\big\langle\tilde{\varphi}_{n}, \tilde{g}\big\rangle_{*, n}
			+ b \Bigg(\int_{\mathbb{R}^{N}}\big|(-\Delta)^{\frac{s}{2}} U\big|^{2}\,dx\Bigg)
			\Bigg(\int_{\mathbb{R}^{N}} (-\Delta)^{\frac{s}{2}} \tilde{\varphi}_{n}\cdot (-\Delta)^{\frac{s}{2}} \tilde{g}\,dx\Bigg) \\
			&\quad+ 2b \Bigg(\int_{\mathbb{R}^{N}} (-\Delta)^{\frac{s}{2}} U\cdot (-\Delta)^{\frac{s}{2}} \tilde{\varphi}_{n}\,dx\Bigg)
			\Bigg(\int_{\mathbb{R}^{N}} (-\Delta)^{\frac{s}{2}} U\cdot (-\Delta)^{\frac{s}{2}} \tilde{g}\,dx\Bigg)
			- p \int_{\mathbb{R}^{N}} U^{p-1} \tilde{\varphi}_{n} \tilde{g}\,dx
			\Bigg| \\
			&\qquad\leq \frac{1}{n}\|\tilde{g}\|_{*, n}.
		\end{aligned}
	\end{equation}
	
	Let $g\in C_{0}^{\infty}(\mathbb{R}^{N})$ be fixed. For each $n$ consider the $N\times N$ matrix
	\[
	A_{n}=(A_{n}^{\ell j})_{\ell,j=1}^{N},\qquad
	A_{n}^{\ell j}
	=\big\langle\partial_{x_{\ell}}U,\partial_{x_{j}}U\big\rangle_{*,n},
	\]
	and the vector $b_{n}=(b_{n}^{1},\dots,b_{n}^{N})$ with
	\[
	b_{n}^{\ell}=\big\langle\partial_{x_{\ell}}U,g\big\rangle_{*,n},\qquad \ell=1,\dots,N.
	\]
	By the decay and symmetry of $U$, we have
	\[
	A_{n}^{\ell j}\to A^{\ell j}
	=\int_{\mathbb{R}^{N}}\Big(a(-\Delta)^{\frac{s}{2}}\partial_{x_{\ell}}U\cdot(-\Delta)^{\frac{s}{2}}\partial_{x_{j}}U
	+\partial_{x_{\ell}}U\,\partial_{x_{j}}U\Big)\,dx,
	\]
	and $A=(A^{\ell j})$ is diagonal with strictly positive diagonal entries. Hence $A$ is invertible and, for $n$ large, so is $A_{n}$.
	
	Let $a_{n}=(a_{n}^{1},\dots,a_{n}^{N})$ be the unique solution of the linear system
	\[
	A_{n}a_{n}=b_{n}.
	\]
	Define
	\[
	\tilde{g}_{n}
	=g-\sum_{\ell=1}^{N}a_{n}^{\ell}\partial_{x_{\ell}}U.
	\]
	Then, by construction,
	\[
	\big\langle\partial_{x_{j}}U,\tilde{g}_{n}\big\rangle_{*,n}
	=b_{n}^{j}-\sum_{\ell=1}^{N}A_{n}^{j\ell}a_{n}^{\ell}=0,\qquad j=1,\dots,N,
	\]
	so that $\tilde{g}_{n}\in\tilde{E}_{n}$. Moreover, since $A_{n}\to A$ and $b_{n}\to b$ in $\mathbb{R}^{N}$, we have $a_{n}\to a=A^{-1}b$ as $n\to\infty$, and hence
	\[
	\tilde{g}_{n}\to \tilde{g}:=g-\sum_{\ell=1}^{N}a^{\ell}\partial_{x_{\ell}}U
	\quad\text{in }H^{s}(\mathbb{R}^{N}).
	\]
	In particular $\|\tilde{g}_{n}\|_{*,n}=O(1)$ as $n\to\infty$.
	
	Substituting $\tilde{g}=\tilde{g}_{n}$ into \eqref{eq4.2} and letting $n\to\infty$, by the convergences of $\tilde{\varphi}_{n}$ and $\tilde{g}_{n}$, and by the definition of $\mathcal{L}_{+}$, we obtain
	\[
	\big\langle\mathcal{L}_+ \varphi, \tilde{g}\big\rangle=0.
	\]
	Since $\tilde{g}=g-\sum_{\ell=1}^{N}a^{\ell}\partial_{x_{\ell}}U$, this can be rewritten as
	\[
	\big\langle\mathcal{L}_+ \varphi, g\big\rangle
	-\sum_{\ell=1}^{N}a^{\ell}\big\langle\mathcal{L}_+ \varphi, \partial_{x_{\ell}}U\big\rangle=0.
	\]
	But $\partial_{x_{\ell}}U\in\ker\mathcal{L}_+$ and, by self-adjointness,
	\[
	\big\langle\mathcal{L}_+ \varphi, \partial_{x_{\ell}}U\big\rangle
	=\big\langle\varphi, \mathcal{L}_+(\partial_{x_{\ell}}U)\big\rangle=0.
	\]
	Therefore
	\[
	\big\langle\mathcal{L}_+ \varphi, g\big\rangle=0,\quad \forall g\in C_{0}^{\infty}(\mathbb{R}^{N}),
	\]
	which implies $\varphi\in\ker\mathcal{L}_+$. By Proposition~\ref{Pro1.1} there exist $c^{\ell}\in\mathbb{R}$, $\ell=1,\dots,N$, such that
	\[
	\varphi=\sum_{\ell=1}^{N} c^{\ell}\partial_{x_{\ell}} U.
	\]
	
	\medskip
	\noindent\textbf{Step 2: Orthogonality and vanishing of the limit.}
	Next we use the orthogonality condition defining $E_{\varepsilon_{n},y_{n}}$.
	Since $\tilde{\varphi}_{n}\in\tilde{E}_{n}$, we have, for each $\ell=1,\dots,N$,
	\[
	\int_{\mathbb{R}^{N}}\Big(a (-\Delta)^{\frac{s}{2}} \tilde{\varphi}_{n}\cdot (-\Delta)^{\frac{s}{2}} \partial_{x_{\ell}} U
	+V(\varepsilon_{n} x+y_{n})\,\tilde{\varphi}_{n}\,\partial_{x_{\ell}} U\Big)\,dx=0.
	\]
	Passing to the limit as $n\to\infty$ and using $\tilde{\varphi}_{n}\rightharpoonup\varphi$ in $H^{s}$ and
	$V(\varepsilon_n x+y_n)\to V(a_i)>0$ locally uniformly, we obtain
	\[
	0=\int_{\mathbb{R}^{N}}\Big(a (-\Delta)^{\frac{s}{2}} \varphi\cdot (-\Delta)^{\frac{s}{2}} \partial_{x_{\ell}} U
	+\varphi\,\partial_{x_{\ell}} U\Big)\,dx
	=c^{\ell}\int_{\mathbb{R}^{N}}\Big(a\big|(-\Delta)^{\frac{s}{2}} \partial_{x_{\ell}} U\big|^{2}
	+\big(\partial_{x_{\ell}} U\big)^{2}\Big)\,dx.
	\]
	Since the integral is strictly positive, we conclude $c^{\ell}=0$ for all $\ell$, and hence
	\[
	\varphi\equiv 0\quad\text{in }\mathbb{R}^{N}.
	\]
	
	\medskip
	\noindent\textbf{Step 3: Final contradiction.}
	We have shown that $\tilde{\varphi}_{n}\rightharpoonup 0$ in $H^{s}(\mathbb{R}^{N})$ and
	$\tilde{\varphi}_{n}\to 0$ in $L_{\text{loc}}^{p+1}(\mathbb{R}^{N})$. Since $(\tilde{\varphi}_{n})$ is bounded in
	$H^{s}(\mathbb{R}^{N})$, we can estimate
	\begin{equation*}
		\begin{aligned}
			p \int_{\mathbb{R}^{N}} U_{\varepsilon_{n}, y_{n}}^{p-1} \varphi_{n}^{2} \,dx
			&=p \varepsilon_{n}^{N} \int_{\mathbb{R}^{N}} U^{p-1}(x)\,\tilde{\varphi}_{n}^{2}(x)\,dx \\
			&=p \varepsilon_{n}^{N}\left(\int_{B_{R}(0)} U^{p-1}\tilde{\varphi}_{n}^{2}\,dx
			+\int_{\mathbb{R}^{N} \setminus B_{R}(0)} U^{p-1}\tilde{\varphi}_{n}^{2}\,dx\right) \\
			&=p \varepsilon_{n}^{N}\big(o(1)+o_{R}(1)\big),
		\end{aligned}
	\end{equation*}
	where $o(1)\to 0$ as $n\to\infty$ and
	$o_{R}(1)\to 0$ as $R\to\infty$. Choosing $R$ sufficiently large and then $n$ sufficiently large, we obtain
	\[
	p \int_{\mathbb{R}^{N}} U_{\varepsilon_{n}, y_{n}}^{p-1} \varphi_{n}^{2} \,dx
	\leq \frac{1}{2} \varepsilon_{n}^{N}.
	\]
	
	On the other hand, by the definition of $\mathcal{L}_{\varepsilon_{n}}$ and the fact that the Kirchhoff terms are nonnegative, we have
	\begin{equation*}
		\begin{aligned}
			\left\langle\mathcal{L}_{\varepsilon_{n}} \varphi_{n}, \varphi_{n}\right\rangle
			=&\ \|\varphi_{n}\|_{\varepsilon_{n}}^{2}
			+\varepsilon_{n}^{4s-N} b\Bigg(\int_{\mathbb{R}^{N}}\big|(-\Delta)^{\frac{s}{2}} U_{\varepsilon_{n}, y_{n}}\big|^{2}\,dx\Bigg)
			\Bigg(\int_{\mathbb{R}^{N}}\big|(-\Delta)^{\frac{s}{2}} \varphi_{n}\big|^{2}\,dx\Bigg)\\
			&\ +2 \varepsilon_{n}^{4s-N} b\Bigg(\int_{\mathbb{R}^{N}} (-\Delta)^{\frac{s}{2}} U_{\varepsilon_{n}, y_{n}}\cdot (-\Delta)^{\frac{s}{2}} \varphi_{n}\,dx\Bigg)^{2}
			- p \int_{\mathbb{R}^{N}} U_{\varepsilon_{n}, y_{n}}^{p-1} \varphi_{n}^{2}\,dx \\
			\geq&\ \|\varphi_{n}\|_{\varepsilon_{n}}^{2}
			- p \int_{\mathbb{R}^{N}} U_{\varepsilon_{n}, y_{n}}^{p-1} \varphi_{n}^{2}\,dx \\
			\geq&\ \varepsilon_{n}^{N} - \frac{1}{2}\varepsilon_{n}^{N}
			=\frac{1}{2}\varepsilon_{n}^{N}.
		\end{aligned}
	\end{equation*}
	Using \eqref{eq4.1} with $g=\varphi_{n}$ and recalling $\|\varphi_{n}\|_{\varepsilon_{n}}^{2}=\varepsilon_{n}^{N}$, we obtain
	\[
	\left|\left\langle\mathcal{L}_{\varepsilon_{n}} \varphi_{n}, \varphi_{n}\right\rangle\right|
	\leq \frac{1}{n}\|\varphi_{n}\|_{\varepsilon_{n}}^{2}
	=\frac{1}{n}\varepsilon_{n}^{N}.
	\]
	Thus
	\[
	\frac{1}{2}\varepsilon_{n}^{N}
	\le \left\langle\mathcal{L}_{\varepsilon_{n}} \varphi_{n}, \varphi_{n}\right\rangle
	\le \frac{1}{n}\varepsilon_{n}^{N},
	\]
	which is impossible for large $n$. This contradiction completes the proof.
\end{proof}

Lemma \ref{Lem4.1} implies that, when restricted to $E_{\varepsilon,y}$, the operator
$\mathcal{L}_{\varepsilon}:E_{\varepsilon,y}\to E_{\varepsilon,y}$ is invertible and
\[
\big\|\mathcal{L}_{\varepsilon}^{-1}\big\|\le \rho^{-1}
\]
uniformly with respect to $y\in D_{\varepsilon,\delta}$. This further yields the following reduction map.

\begin{Lem}\label{Lem4.2}
	There exist $\varepsilon_{0}>0$ and $\delta_{0}>0$ sufficiently small such that, for all
	$\varepsilon\in(0,\varepsilon_{0})$ and $\delta\in(0,\delta_{0})$, there exists a $C^{1}$ map
	\[
	\varphi_{\varepsilon}:D_{\varepsilon,\delta}\longrightarrow H_{\varepsilon},\qquad
	y\longmapsto \varphi_{\varepsilon,y}\in E_{\varepsilon,y},
	\]
	satisfying
	\begin{equation*}
		\left\langle\frac{\partial J_{\varepsilon}\big(y,\varphi_{\varepsilon,y}\big)}{\partial \varphi},
		\psi\right\rangle_{\varepsilon}=0,
		\quad \forall\,\psi\in E_{\varepsilon,y}.
	\end{equation*}
	Moreover, there exist a constant $C>0$, independent of $\varepsilon$ sufficiently small, and
	$\kappa\in(0,\frac{\alpha}{2})$ such that, for all $y\in D_{\varepsilon,\delta}$,
	\begin{equation*}
		\|\varphi_{\varepsilon,y}\|_{\varepsilon}
		\le C\,\varepsilon^{\frac{N}{2}+\alpha-\kappa}
		+ C\,\varepsilon^{\frac{N}{2}}\sum_{i=1}^{k}\big|V(y^{i})-V(a_i)\big|^{1-\kappa}
		+ C\,\varepsilon^{\frac{N}{2}}
		\begin{cases}
			\displaystyle\sum\limits_{i\ne j}
			\frac{1}{\big|\tfrac{y^{i}-y^{j}}{\varepsilon}\big|^{N+2s-\kappa}},
			& \text{if }p>2,\\[1.2em]
			\displaystyle\sum\limits_{i\ne j}
			\frac{1}{\big|\tfrac{y^{i}-y^{j}}{\varepsilon}\big|^{\frac{p}{2}(N+2s-\kappa)}},
			& \text{if }1<p\le 2.
		\end{cases}
	\end{equation*}
\end{Lem}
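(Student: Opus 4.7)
The plan is to recast the equation $\partial_\varphi J_\varepsilon(y,\varphi) = 0$ on $E_{\varepsilon,y}$ as a Banach fixed-point problem and solve it for each $y \in D_{\varepsilon,\delta}$, then upgrade the $y$-dependence to $C^1$ via the implicit function theorem. Writing the expansion
\[
\partial_\varphi J_\varepsilon(y,\varphi) = l_\varepsilon + \mathcal{L}_\varepsilon \varphi + R_\varepsilon'(\varphi),
\]
and using the orthogonal projection $P_{\varepsilon,y}\colon H_\varepsilon \to E_{\varepsilon,y}$ with respect to $\langle\cdot,\cdot\rangle_\varepsilon$, the reduced equation on $E_{\varepsilon,y}$ reads
\[
\mathcal{L}_\varepsilon \varphi = -P_{\varepsilon,y}\bigl(l_\varepsilon + R_\varepsilon'(\varphi)\bigr).
\]
By Lemma \ref{Lem4.1}, $\mathcal{L}_\varepsilon$ is invertible on $E_{\varepsilon,y}$ with $\|\mathcal{L}_\varepsilon^{-1}\|\leq \rho^{-1}$ uniformly in $y \in D_{\varepsilon,\delta}$, so the equation is equivalent to the fixed-point problem $\varphi = T_{\varepsilon,y}(\varphi)$ with $T_{\varepsilon,y}(\varphi) := -\mathcal{L}_\varepsilon^{-1} P_{\varepsilon,y}\bigl(l_\varepsilon + R_\varepsilon'(\varphi)\bigr)$.

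The core step is to show that $T_{\varepsilon,y}$ is a contraction on a small ball of $E_{\varepsilon,y}$. Denote by $\rho_\varepsilon(y)$ the right-hand side appearing in the statement of Lemma \ref{Lem4.2}, and consider the ball $\mathcal{B}_\varepsilon := \{\varphi \in E_{\varepsilon,y} : \|\varphi\|_\varepsilon \leq \rho_\varepsilon(y)\}$. First, invariance: Lemma \ref{Lem3.3} gives $\|l_\varepsilon\|_{H_\varepsilon^*} \lesssim \varepsilon^{N/2+\alpha} + \varepsilon^{N/2}\sum_i|V(y^i)-V(a_i)| + \text{(off-diagonal terms)}$, while Lemma \ref{Lem3.4} bounds $\|R_\varepsilon'(\varphi)\|_{H_\varepsilon^*}$ by a combination of $\varepsilon^{-N(p-1)/2}\|\varphi\|_\varepsilon^p$ and the nonlocal pieces $(b+1)\varepsilon^{-N/2}(1+\varepsilon^{-N/2}\|\varphi\|_\varepsilon)\|\varphi\|_\varepsilon^{N-1}$. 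Substituting $\|\varphi\|_\varepsilon \leq \rho_\varepsilon(y)$ and using the constraint $|y^i-y^j|/\varepsilon \geq \varepsilon^{\theta-1}$ in $D_{\varepsilon,\delta}$, each nonlinear contribution gains a positive power of $\varepsilon$ over $\rho_\varepsilon(y)$, provided we pay a small loss $\kappa > 0$ in the exponents --- this is exactly the purpose of the $\kappa$ introduced in the target bound. Contraction: Lemma \ref{Lem3.4} applied to $R_\varepsilon''$ yields $\|T_{\varepsilon,y}(\varphi_1)-T_{\varepsilon,y}(\varphi_2)\|_\varepsilon \leq \rho^{-1}\|R_\varepsilon'(\varphi_1)-R_\varepsilon'(\varphi_2)\|_{H_\varepsilon^*} = o_\varepsilon(1)\|\varphi_1-\varphi_2\|_\varepsilon$, so Banach's principle produces a unique $\varphi_{\varepsilon,y} \in \mathcal{B}_\varepsilon$ satisfying the required orthogonality for all $\psi \in E_{\varepsilon,y}$, with the norm estimate inherited from the radius of $\mathcal{B}_\varepsilon$.

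For the $C^1$ dependence on $y$, I would apply the implicit function theorem to the $C^1$ map $\Phi(y,\varphi) := \varphi + \mathcal{L}_\varepsilon^{-1}P_{\varepsilon,y}(l_\varepsilon + R_\varepsilon'(\varphi))$. At $(y,\varphi_{\varepsilon,y})$, the Fréchet derivative $\partial_\varphi \Phi = I + \mathcal{L}_\varepsilon^{-1}P_{\varepsilon,y}R_\varepsilon''(\varphi_{\varepsilon,y})$ differs from the identity by an operator of norm $o_\varepsilon(1)$ by Lemma \ref{Lem3.4}, hence is invertible for $\varepsilon$ small. The main obstacle is bookkeeping rather than conceptual: the nonlocal pieces inside $R_\varepsilon$ coming from $b\varepsilon^{4s-N}(\int|(-\Delta)^{s/2}U_{\varepsilon,y}|^2)$ produce ``bad'' prefactors $\varepsilon^{-N/2}$ and $\varepsilon^{-N}$, which compete with the smallness of $\rho_\varepsilon(y)$; ensuring that every one of the three leading terms in $\rho_\varepsilon(y)$ --- the H\"older error $\varepsilon^{\alpha}$, the potential gap $|V(y^i)-V(a_i)|$, and the interaction $|(y^i-y^j)/\varepsilon|^{-(N+2s)}$ (or its $p/2$-power for $1<p\leq 2$) --- dominates the nonlinear feedback simultaneously is what forces the small exponent loss $\kappa\in(0,\alpha/2)$ and must be checked case-by-case against the two regimes $p>2$ and $1<p\leq 2$ of Lemma \ref{Lem3.3}.
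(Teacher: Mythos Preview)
Your proposal is correct and follows essentially the same route as the paper: recast $\partial_\varphi J_\varepsilon(y,\varphi)=0$ on $E_{\varepsilon,y}$ as the fixed-point equation $\varphi=-\mathcal{L}_\varepsilon^{-1}(l_\varepsilon+R_\varepsilon'(\varphi))$, apply the contraction mapping theorem on a ball whose radius is the right-hand side of the claimed bound (with the $\kappa$-loss inserted precisely so that $C\|l_\varepsilon\|$ is strictly smaller than the radius and the nonlinear feedback is absorbed), split into the cases $p>2$ and $1<p\le 2$ using the corresponding estimates of Lemmas~\ref{Lem3.3} and~\ref{Lem3.4}, and then upgrade to $C^1$ in $y$. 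The only cosmetic differences are that you keep the projection $P_{\varepsilon,y}$ explicit (the paper absorbs it by identifying functionals with their Riesz representatives in $E_{\varepsilon,y}$) and that you invoke the implicit function theorem directly for the $C^1$ regularity, whereas the paper cites the argument of Cao--Noussair--Yan; both amount to the same verification that $\partial_\varphi\Phi$ is a small perturbation of the identity.
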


\begin{proof}
	The existence of the mapping $y\mapsto\varphi_{\varepsilon,y}$ follows from the contraction mapping
	principle. We construct the contraction as follows.
	
	Let $\varepsilon_{1},\delta_{1}$ be as in Lemma~\ref{Lem4.1}, and fix
	$0<\varepsilon_{0}\le\varepsilon_{1}$, $0<\delta_{0}\le\delta_{1}$ to be chosen later.
	For fixed $y\in D_{\varepsilon,\delta}$ with $\delta<\delta_{0}$, recall that
	\[
	J_{\varepsilon}(y,\varphi)
	=I_{\varepsilon}(U_{\varepsilon,y})
	+l_{\varepsilon}(\varphi)
	+\frac{1}{2}\langle\mathcal{L}_{\varepsilon}\varphi,\varphi\rangle
	+R_{\varepsilon}(\varphi),
	\]
	so that
	\[
	\frac{\partial J_{\varepsilon}(y,\varphi)}{\partial \varphi}
	=l_{\varepsilon}+\mathcal{L}_{\varepsilon}\varphi+R_{\varepsilon}'(\varphi).
	\]
	Since $E_{\varepsilon,y}$ is a closed subspace of $H_{\varepsilon}$, Lemmas~\ref{Lem3.3} and \ref{Lem3.4}
	imply that $l_{\varepsilon}$ and $R_{\varepsilon}'(\varphi)$, restricted to $E_{\varepsilon,y}$, define bounded
	linear functionals. By the Riesz representation in the Hilbert space $(E_{\varepsilon,y},\langle\cdot,\cdot\rangle_\varepsilon)$,
	we can identify them with elements of $E_{\varepsilon,y}$, still denoted by $l_{\varepsilon}$ and
	$R_{\varepsilon}'(\varphi)$. Thus, to prove Lemma~\ref{Lem4.2}, it is equivalent to find
	$\varphi\in E_{\varepsilon,y}$ such that
	\begin{equation}\label{eq4.3}
		\varphi=\mathcal{A}_{\varepsilon}(\varphi)
		:=-\mathcal{L}_{\varepsilon}^{-1}\big(l_{\varepsilon}+R_{\varepsilon}'(\varphi)\big).
	\end{equation}
	
	\medskip\noindent\textbf{Case 1: $p>2$.}
	Let $\kappa\in(0,\frac{\alpha}{2})$ be fixed. For each fixed $y\in D_{\varepsilon,\delta}$ we define
	\[
	\begin{aligned}
		S_{\varepsilon}
		:=\Big\{\varphi\in E_{\varepsilon,y}:\ \|\varphi\|_{\varepsilon}
		&\le \varepsilon^{\frac{N}{2}+\alpha-\kappa}
		+\varepsilon^{\frac{N}{2}}\sum_{i=1}^{k}\big|V(y^{i})-V(a_i)\big|^{1-\kappa}\\
		&\quad
		+\varepsilon^{\frac{N}{2}}\sum_{i\ne j}
		\frac{1}{\big|\tfrac{y^{i}-y^{j}}{\varepsilon}\big|^{N+2s-\kappa}}
		\Big\}.
	\end{aligned}
	\]
	We first show that $\mathcal{A}_{\varepsilon}$ maps $S_{\varepsilon}$ into itself.
	
	By Lemma~\ref{Lem4.1}, $\|\mathcal{L}_{\varepsilon}^{-1}\|\le C$ uniformly in $y\in D_{\varepsilon,\delta}$.
	Hence, for $\varphi\in S_{\varepsilon}$,
	\begin{equation}\label{eq:Aeps-basic}
		\|\mathcal{A}_{\varepsilon}(\varphi)\|_{\varepsilon}
		\le C\Big(\|l_{\varepsilon}\|_{(E_{\varepsilon,y})^{*}}+\|R_{\varepsilon}'(\varphi)\|_{(E_{\varepsilon,y})^{*}}\Big).
	\end{equation}
	Lemmas~\ref{Lem3.3} and \ref{Lem3.4} (in the case $p>2$) yield
	\[
	\|l_{\varepsilon}\|_{(E_{\varepsilon,y})^{*}}
	\le C\Big(
	\varepsilon^{\frac{N}{2}+\alpha}
	+\varepsilon^{\frac{N}{2}}\sum_{i=1}^{k}\big|V(y^{i})-V(a_i)\big|
	+\varepsilon^{\frac{N}{2}}\sum_{i\ne j}
	\frac{1}{\big|\tfrac{y^{i}-y^{j}}{\varepsilon}\big|^{N+2s}}
	\Big)
	\]
	and
	\[
	\|R_{\varepsilon}'(\varphi)\|_{(E_{\varepsilon,y})^{*}}
	\le C\,\varepsilon^{-\frac{N}{2}}\|\varphi\|_{\varepsilon}^{2},
	\]
	for some $C>0$ independent of $\varepsilon$ and $y\in D_{\varepsilon,\delta}$. Using the definition of $S_{\varepsilon}$,
	we have
	\[
	\|\varphi\|_{\varepsilon}
	\le C_0\Big(
	\varepsilon^{\frac{N}{2}+\alpha-\kappa}
	+\varepsilon^{\frac{N}{2}}\sum_{i=1}^{k}\big|V(y^{i})-V(a_i)\big|^{1-\kappa}
	+\varepsilon^{\frac{N}{2}}\sum_{i\ne j}
	\frac{1}{\big|\tfrac{y^{i}-y^{j}}{\varepsilon}\big|^{N+2s-\kappa}}
	\Big),
	\]
	for some $C_0>0$. Hence
	\[
	\varepsilon^{-\frac{N}{2}}\|\varphi\|_{\varepsilon}^{2}
	\le C\Big(
	\varepsilon^{\frac{N}{2}+2\alpha-2\kappa}
	+\varepsilon^{\frac{N}{2}}\sum_{i=1}^{k}\big|V(y^{i})-V(a_i)\big|^{2(1-\kappa)}
	+\varepsilon^{\frac{N}{2}}\sum_{i\ne j}
	\frac{1}{\big|\tfrac{y^{i}-y^{j}}{\varepsilon}\big|^{2(N+2s-\kappa)}}
	\Big).
	\]
	Since $y^{i}$ is restricted by $|y^{i}-a_i|<\delta_{0}$ and the $a_i$'s are fixed, we may choose
	$\delta_{0}>0$ sufficiently small so that $\big|V(y^{i})-V(a_i)\big|\le 1$ for all $y\in D_{\varepsilon,\delta}$.
	Then, for $0<\kappa<1$, we have
	\[
	\big|V(y^{i})-V(a_i)\big|^{2(1-\kappa)}
	\le \big|V(y^{i})-V(a_i)\big|^{1-\kappa},
	\]
	and similarly
	\[
	\frac{1}{\big|\tfrac{y^{i}-y^{j}}{\varepsilon}\big|^{2(N+2s-\kappa)}}
	\le \frac{1}{\big|\tfrac{y^{i}-y^{j}}{\varepsilon}\big|^{N+2s-\kappa}}.
	\]
	Moreover, since $\kappa<\alpha/2$, for $\varepsilon$ sufficiently small one has
	$\varepsilon^{\frac{N}{2}+2\alpha-2\kappa}\le \varepsilon^{\frac{N}{2}+\alpha-\kappa}$.
	Combining these with \eqref{eq:Aeps-basic} we obtain
	\[
	\|\mathcal{A}_{\varepsilon}(\varphi)\|_{\varepsilon}
	\le C\Big(
	\varepsilon^{\frac{N}{2}+\alpha-\kappa}
	+\varepsilon^{\frac{N}{2}}\sum_{i=1}^{k}\big|V(y^{i})-V(a_i)\big|^{1-\kappa}
	+\varepsilon^{\frac{N}{2}}\sum_{i\ne j}
	\frac{1}{\big|\tfrac{y^{i}-y^{j}}{\varepsilon}\big|^{N+2s-\kappa}}
	\Big),
	\]
	so that $\mathcal{A}_{\varepsilon}(S_{\varepsilon})\subset S_{\varepsilon}$.
	
	Next we check the contraction property. For any $\varphi_{1},\varphi_{2}\in S_{\varepsilon}$,
	\[
	\begin{aligned}
		\|\mathcal{A}_{\varepsilon}(\varphi_{1})-\mathcal{A}_{\varepsilon}(\varphi_{2})\|_{\varepsilon}
		&=\big\|\mathcal{L}_{\varepsilon}^{-1}\big(R_{\varepsilon}'(\varphi_{1})-R_{\varepsilon}'(\varphi_{2})\big)\big\|_{\varepsilon}\\
		&\le C\,\big\|R_{\varepsilon}'(\varphi_{1})-R_{\varepsilon}'(\varphi_{2})\big\|_{(E_{\varepsilon,y})^{*}}\\
		&\le C\,\sup_{0\le\theta\le 1}\big\|R_{\varepsilon}''\big(\theta\varphi_{1}+(1-\theta)\varphi_{2}\big)\big\|
		\,\|\varphi_{1}-\varphi_{2}\|_{\varepsilon}.
	\end{aligned}
	\]
	Lemma~\ref{Lem3.4} (for $p>2$) gives
	\[
	\big\|R_{\varepsilon}''(\psi)\big\|
	\le C\,\varepsilon^{-\frac{N}{2}}\|\psi\|_{\varepsilon},
	\quad\forall\,\psi\in E_{\varepsilon,y}.
	\]
	Since each convex combination $\theta\varphi_{1}+(1-\theta)\varphi_{2}$ belongs to $S_{\varepsilon}$,
	we have $\|\psi\|_{\varepsilon}\le \mathrm{rad}(S_{\varepsilon})$, where
	\[
	\mathrm{rad}(S_{\varepsilon})
	\le C\Big(
	\varepsilon^{\frac{N}{2}+\alpha-\kappa}
	+\varepsilon^{\frac{N}{2}}\sum_{i=1}^{k}\big|V(y^{i})-V(a_i)\big|^{1-\kappa}
	+\varepsilon^{\frac{N}{2}}\sum_{i\ne j}
	\frac{1}{\big|\tfrac{y^{i}-y^{j}}{\varepsilon}\big|^{N+2s-\kappa}}
	\Big).
	\]
	Using again the separation condition $\frac{|y^{i}-y^{j}|}{\varepsilon}\ge\varepsilon^{\theta-1}$ and the
	localization of $y^{i}$ near $a_i$, we see that the last two sums remain bounded as $\varepsilon\to 0$.
	Therefore
	\[
	\varepsilon^{-\frac{N}{2}}\mathrm{rad}(S_{\varepsilon})
	\le C\,\varepsilon^{\alpha-\kappa},
	\]
	and, since $\alpha-\kappa>0$, we can choose $\varepsilon_{0}$ sufficiently small such that
	$C\,\varepsilon^{\alpha-\kappa}\le \frac12$ for all $\varepsilon\in(0,\varepsilon_{0})$. It follows that
	\[
	\|\mathcal{A}_{\varepsilon}(\varphi_{1})-\mathcal{A}_{\varepsilon}(\varphi_{2})\|_{\varepsilon}
	\le \frac12\,\|\varphi_{1}-\varphi_{2}\|_{\varepsilon},
	\]
	so $\mathcal{A}_{\varepsilon}$ is a contraction on $S_{\varepsilon}$.
	
	By the contraction mapping principle, for each fixed $y\in D_{\varepsilon,\delta}$ there exists a unique
	$\varphi_{\varepsilon,y}\in S_{\varepsilon}$ solving the fixed point equation \eqref{eq4.3}. This
	$\varphi_{\varepsilon,y}$ satisfies the claimed estimate for $p>2$.
	
	\medskip\noindent\textbf{Case 2: $1<p\le 2$.}
	In this case, Lemma~\ref{Lem3.4} yields
	\[
	\|R_{\varepsilon}'(\varphi)\|_{(E_{\varepsilon,y})^{*}}
	\le C\,\varepsilon^{\frac{1-p}{2}N}\|\varphi\|_{\varepsilon}^{p},
	\qquad
	\|R_{\varepsilon}''(\varphi)\|
	\le C\,\varepsilon^{\frac{1-p}{2}N}\|\varphi\|_{\varepsilon}^{p-1}.
	\]
	We now define
	\[
	\begin{aligned}
		S_{\varepsilon}
		:=\Big\{\varphi\in E_{\varepsilon,y}:\ \|\varphi\|_{\varepsilon}
		&\le \varepsilon^{\frac{N}{2}+\alpha-\kappa}
		+\varepsilon^{\frac{N}{2}}\sum_{i=1}^{k}\big|V(y^{i})-V(a_i)\big|^{1-\kappa}\\
		&\quad
		+\varepsilon^{\frac{N}{2}}\sum_{i\ne j}
		\frac{1}{\big|\tfrac{y^{i}-y^{j}}{\varepsilon}\big|^{\frac{p}{2}(N+2s-\kappa)}}\Big\}.
	\end{aligned}
	\]
	Repeating the above arguments and using the bounds for $R_{\varepsilon}'$ and $R_{\varepsilon}''$ in place
	of the $p>2$ estimates, we get, for $\varepsilon>0$ sufficiently small, that $\mathcal{A}_{\varepsilon}$
	maps $S_{\varepsilon}$ into itself and is a contraction on $S_{\varepsilon}$. Therefore, for each fixed
	$y\in D_{\varepsilon,\delta}$, there exists a unique $\varphi_{\varepsilon,y}\in S_{\varepsilon}$ solving
	\eqref{eq4.3}, and the stated estimate holds in the case $1<p\le 2$.
	
	It remains to prove that the map $y\mapsto \varphi_{\varepsilon,y}$ is of class $C^{1}$ from
	$D_{\varepsilon,\delta}$ into $H_{\varepsilon}$. Consider, for fixed $\varepsilon$, the equation
	\[
	\mathcal{F}(\varepsilon,y,\varphi)
	:=l_{\varepsilon}(y)+\mathcal{L}_{\varepsilon}(y)\varphi+R_{\varepsilon}'(y,\varphi)=0,
	\]
	viewed as an equation in the unknown $\varphi\in E_{\varepsilon,y}$. The previous steps show that for each
	$y\in D_{\varepsilon,\delta}$ there is a unique small solution $\varphi_{\varepsilon,y}$, and that the
	Fréchet derivative with respect to $\varphi$,
	\[
	D_{\varphi}\mathcal{F}(\varepsilon,y,\varphi_{\varepsilon,y})
	=\mathcal{L}_{\varepsilon}(y)+R_{\varepsilon}''(y,\varphi_{\varepsilon,y}),
	\]
	is invertible on $E_{\varepsilon,y}$ for $\varepsilon$ sufficiently small, by Lemma~\ref{Lem4.1} and the
	smallness of $\varphi_{\varepsilon,y}$. Therefore, by the implicit function theorem (see, e.g.,
	Cao--Noussair--Yan \cite{MR1686700}), there exists a unique $C^{1}$ map
	\[
	\tilde{\varphi}_{\varepsilon}:D_{\varepsilon,\delta}\to E_{\varepsilon,y},
	\]
	such that $\mathcal{F}(\varepsilon,y,\tilde{\varphi}_{\varepsilon,y})=0$ for all $y\in D_{\varepsilon,\delta}$.
	By uniqueness of the fixed point of $\mathcal{A}_{\varepsilon}$, we have
	$\tilde{\varphi}_{\varepsilon,y}=\varphi_{\varepsilon,y}$ for all $y$, so the map
	$y\mapsto\varphi_{\varepsilon,y}$ is indeed of class $C^{1}$.
	
	The proof is complete.
\end{proof}

\subsection{Proof of Theorem \ref{Thm1.1}}

Let $\varepsilon_{0}$ and $\delta_{0}$ be as in Lemma~\ref{Lem4.2} and fix
$0<\varepsilon<\varepsilon_{0}$ and $0<\delta<\delta_{0}$. For every
$y\in D_{\varepsilon,\delta}$, let $y\mapsto\varphi_{\varepsilon,y}$ be the map
given by Lemma~\ref{Lem4.2}. As explained in {\bf Step 2} of Section~3, by
Lemma~\ref{Lem3.1} it is equivalent to find a critical point of the function
$j_{\varepsilon}:D_{\varepsilon,\delta}\to\R$ defined by
\begin{equation*}
	j_{\varepsilon}(y)
	=J_{\varepsilon}\big(y,\varphi_{\varepsilon,y}\big)
	=I_{\varepsilon}\big(U_{\varepsilon,y}+\varphi_{\varepsilon,y}\big),
\end{equation*}
where $J_{\varepsilon}$ is as in \eqref{eq3.1}. By the Taylor expansion
around $\varphi=0$, we have
\begin{equation*}
	j_{\varepsilon}(y)
	=I_{\varepsilon}\big(U_{\varepsilon,y}\big)
	+l_{\varepsilon}(\varphi_{\varepsilon,y})
	+\frac12\big\langle\mathcal{L}_{\varepsilon}\varphi_{\varepsilon,y},
	\varphi_{\varepsilon,y}\big\rangle
	+R_{\varepsilon}(\varphi_{\varepsilon,y}).
\end{equation*}

We first analyze the asymptotic behaviour of $j_{\varepsilon}$ as
$\varepsilon\to0$. By Lemmas~\ref{Lem3.3}, \ref{Lem3.4}, \ref{Lem3.5} and
\ref{Lem4.2}, there exist positive constants $C_{1},C_{2},C>0$ and
$\tau>0$, independent of $\varepsilon$ and $y\in D_{\varepsilon,\delta}$, such that
\begin{equation}\label{eq4.4}
	\begin{aligned}
		j_{\varepsilon}(y)
		&=I_{\varepsilon}\big(U_{\varepsilon,y}\big)
		+O\Big(\|l_{\varepsilon}\|_{(E_{\varepsilon,y})^{*}}
		\,\|\varphi_{\varepsilon,y}\|_{\varepsilon}
		+\|\varphi_{\varepsilon,y}\|_{\varepsilon}^{2}\Big)\\
		&=A\varepsilon^{N}
		+\varepsilon^{N}\sum_{i=1}^{k}B_{i}\big(V(y^{i})-V(a_{i})\big)
		-C\,\varepsilon^{N}\sum_{i\neq j}
		\frac{1}{\big|\tfrac{y^{i}-y^{j}}{\varepsilon}\big|^{N+2s}}\\
		&\quad
		+O\Big(
		\varepsilon^{N+\alpha}
		+\varepsilon^{N}\sum_{i=1}^{k}\big|V(y^{i})-V(a_{i})\big|^{2}
		+\varepsilon^{N}\sum_{i\neq j}
		\frac{1}{\big|\tfrac{y^{i}-y^{j}}{\varepsilon}\big|^{N+2s+\tau}}
		\Big)\\
		&=A\varepsilon^{N}
		-C_{1}\varepsilon^{N}\sum_{i=1}^{k}B_{i}\big(V(a_{i})-V(y^{i})\big)
		-C_{2}\varepsilon^{N}\sum_{i\neq j}
		\frac{1}{\big|\tfrac{y^{i}-y^{j}}{\varepsilon}\big|^{N+2s}}\\
		&\quad
		+O\Big(
		\varepsilon^{N}\sum_{i=1}^{k}\big|V(a_{i})-V(y^{i})\big|^{2}
		+\varepsilon^{N+\alpha}
		+\varepsilon^{N}\sum_{i\neq j}
		\frac{1}{\big|\tfrac{y^{i}-y^{j}}{\varepsilon}\big|^{N+2s+\tau}}
		\Big),
	\end{aligned}
\end{equation}
where $A$ and $B_{i}$ are the constants in Lemma~\ref{Lem3.5}. The Big–$O$
term in \eqref{eq4.4} is uniform in $y\in D_{\varepsilon,\delta}$.

\medskip
We now consider the minimization problem
\[
j_{\varepsilon}(y_{\varepsilon})
:=\inf_{y\in D_{\varepsilon,\delta}} j_{\varepsilon}(y).
\]
Assume that the infimum is attained at some $y_{\varepsilon}\in D_{\varepsilon,\delta}$.
We prove that $y_{\varepsilon}$ is an interior point of $D_{\varepsilon,\delta}$.

\medskip
For $\theta\in\big(\tfrac{N+2s}{N+2s+\alpha},1\big)$ and $L>0$, let
\[
\bar y^{i}
:=a_{i}+L\varepsilon^{\theta}e_{i},\qquad i=1,\dots,k,
\]
where $e_{1},\dots,e_{k}$ are fixed unit vectors with $|e_{i}-e_{j}|=1$ for
$i\neq j$. For $L$ large and $\varepsilon>0$ sufficiently small, we have
$\bar y:=(\bar y^{1},\dots,\bar y^{k})\in D_{\varepsilon,\delta}$. By the
Hölder continuity of $V$ near $a_{i}$ and the decay assumptions on $U^{i}$,
there exist positive constants $c_{1},c_{2}$ (independent of $\varepsilon$)
such that
\[
V(a_{i})-V(\bar y^{i})
=c_{1}\varepsilon^{\alpha\theta}+o\big(\varepsilon^{\alpha\theta}\big),
\qquad
\sum_{i\neq j}
\frac{1}{\big|\tfrac{\bar y^{i}-\bar y^{j}}{\varepsilon}\big|^{N+2s}}
=c_{2}\varepsilon^{(1-\theta)(N+2s)}+o\big(\varepsilon^{(1-\theta)(N+2s)}\big).
\]
Since $\alpha\theta>(1-\theta)(N+2s)$ by our choice of $\theta$, the
interaction term provides the leading correction. Plugging $\bar y$ into
\eqref{eq4.4}, and using that the error term is of higher order (because of
$\alpha\theta>(1-\theta)(N+2s)$ and $\tau>0$), we obtain
\begin{equation}\label{eq:j-bar-y}
	j_{\varepsilon}(\bar y)
	=A\varepsilon^{N}
	-\tilde C\,\varepsilon^{N+(1-\theta)(N+2s)}
	+o\big(\varepsilon^{N+(1-\theta)(N+2s)}\big)
\end{equation}
for some $\tilde C>0$ independent of $\varepsilon$.

On the other hand, for the minimizer $y_{\varepsilon}$ we have
$j_{\varepsilon}(y_{\varepsilon})\le j_{\varepsilon}(\bar y)$. Using
\eqref{eq4.4} at $y=y_{\varepsilon}$ and at $y=\bar y$, we can write
\begin{equation*}
	\begin{aligned}
		&C_{1}\varepsilon^{N}\sum_{i=1}^{k}B_{i}\big(V(a_{i})-V(y_{\varepsilon}^{i})\big)
		+C_{2}\varepsilon^{N}\sum_{i\neq j}
		\frac{1}{\big|\tfrac{y_{\varepsilon}^{i}-y_{\varepsilon}^{j}}{\varepsilon}\big|^{N+2s}}\\
		&\qquad\le
		C_{1}\varepsilon^{N}\sum_{i=1}^{k}B_{i}\big(V(a_{i})-V(\bar y^{i})\big)
		+C_{2}\varepsilon^{N}\sum_{i\neq j}
		\frac{1}{\big|\tfrac{\bar y^{i}-\bar y^{j}}{\varepsilon}\big|^{N+2s}}
		+|R_{\varepsilon}(\bar y)|+|R_{\varepsilon}(y_{\varepsilon})|,
	\end{aligned}
\end{equation*}
where the remainder $R_{\varepsilon}(y)$ collects all the $O(\cdot)$ terms
in \eqref{eq4.4}. By the previous estimates and \eqref{eq:j-bar-y}, we have
\[
C_{1}\varepsilon^{N}\sum_{i=1}^{k}B_{i}\big(V(a_{i})-V(y_{\varepsilon}^{i})\big)
+C_{2}\varepsilon^{N}\sum_{i\neq j}
\frac{1}{\big|\tfrac{y_{\varepsilon}^{i}-y_{\varepsilon}^{j}}{\varepsilon}\big|^{N+2s}}
\le C\,\varepsilon^{N+(1-\theta)(N+2s)},
\]
that is,
\begin{equation}\label{eq:V-interaction-bound}
	\sum_{i=1}^{k}B_{i}\big(V(a_{i})-V(y_{\varepsilon}^{i})\big)
	+\sum_{i\neq j}
	\frac{1}{\big|\tfrac{y_{\varepsilon}^{i}-y_{\varepsilon}^{j}}{\varepsilon}\big|^{N+2s}}
	\le C\,\varepsilon^{(1-\theta)(N+2s)}.
\end{equation}

In particular,
\begin{equation}\label{eq:V-bound}
	\sum_{i=1}^{k}B_{i}\big(V(y_{\varepsilon}^{i})-V(a_{i})\big)
	\le C\,\varepsilon^{(1-\theta)(N+2s)}.
\end{equation}
By the expansion of $V$ at $a_{i}$ and the nondegeneracy of $a_{i}$, there exist
$c>0$ and $\delta_{1}>0$ such that for all $|x-a_{i}|<\delta_{1}$,
\[
V(a_{i})-V(x)\ge c\,|x-a_{i}|^{\alpha}.
\]
Choosing $\delta_{0}\le\delta_{1}$ and using \eqref{eq:V-bound}, we deduce
\[
|y_{\varepsilon}^{i}-a_{i}|^{\alpha}
\le C\,\varepsilon^{(1-\theta)(N+2s)}
\quad\Longrightarrow\quad
|y_{\varepsilon}^{i}-a_{i}|
\le C\,\varepsilon^{\frac{(1-\theta)(N+2s)}{\alpha}}
\]
for each $i$. In particular, for $\varepsilon>0$ sufficiently small we have
$|y_{\varepsilon}^{i}-a_{i}|<\delta_{0}$.

Similarly, from \eqref{eq:V-interaction-bound} we obtain
\[
\sum_{i\neq j}
\frac{1}{\big|\tfrac{y_{\varepsilon}^{i}-y_{\varepsilon}^{j}}{\varepsilon}\big|^{N+2s}}
\le C\,\varepsilon^{(1-\theta)(N+2s)},
\]
which implies that there exists $C'>0$ such that
\[
\bigg|\frac{y_{\varepsilon}^{i}-y_{\varepsilon}^{j}}{\varepsilon}\bigg|
\ge C'\,\varepsilon^{\theta-1}
\quad\text{for all }i\ne j
\]
when $\varepsilon$ is small enough. Therefore, for sufficiently small
$\varepsilon$, all the inequalities defining $D_{\varepsilon,\delta}$ are
strict, and $y_{\varepsilon}$ lies in the interior of $D_{\varepsilon,\delta}$.
Hence $y_{\varepsilon}$ is a critical point of $j_{\varepsilon}$.

Finally, Lemma~\ref{Lem3.1} shows that
\[
u_{\varepsilon}
:=U_{\varepsilon,y_{\varepsilon}}+\varphi_{\varepsilon,y_{\varepsilon}}
\]
is a critical point of $I_{\varepsilon}$ in $H_{\varepsilon}$, and thus a
solution of \eqref{eq1.1}. This completes the proof of Theorem~\ref{Thm1.1}.
\qed

\section{Uniqueness of semiclassical bounded states}
Now, let $u_{\varepsilon}=U_{\varepsilon, y_{\varepsilon}}+\varphi_{\varepsilon, y_{\varepsilon}}$ be an arbitrary solution of \eqref{eq1.1} derived as in Section~4. We know that $y_{\varepsilon}=o(1)$ as $\varepsilon \rightarrow 0$. Before proving Theorem \ref{Thm1.2}, we first collect some useful facts. Then, by assuming that $V$ satisfies the additional assumption $(V_3)$ and by means of the above Pohoz\v{a}ev type identity, we will improve this asymptotic estimate. We begin by recalling some useful estimates.

\begin{Lem}\label{Lem5.1}
	Suppose that $V$ satisfies $(V_3)$. Then there exists a constant $C>0$, independent of $\varepsilon$, such that for every $u\in H_{\varepsilon}$ and every $i=1,\ldots,k$ one has
	\begin{equation}\label{eq5.1}
		\left|\int_{\mathbb{R}^{N}}\bigl(V(a_i)-V(x)\bigr)\,U^i_{\varepsilon, y^i_{\varepsilon}}(x)\,u(x)\,dx\right|
		\leq C\,\varepsilon^{\frac{N}{2}}\Bigl(\varepsilon^{m}+\bigl|y^i_{\varepsilon}-a_i\bigr|^{m}\Bigr)\,\|u\|_{\varepsilon},
	\end{equation}
	and, for every $\bar{d}\in(0,\delta]$ and every $j=1,\ldots,N$,
	\begin{equation}\label{eq5.1-der}
		\left|\int_{B_{\bar{d}}\!\left(y^i_{\varepsilon}\right)}
		\frac{\partial V(x)}{\partial x_{j}}\,
		U^i_{\varepsilon, y^i_{\varepsilon}}(x)\,u(x)\,dx\right|
		\leq C\,\varepsilon^{\frac{N}{2}}\Bigl(\varepsilon^{m-1}+\bigl|y^i_{\varepsilon}-a_i\bigr|^{m-1}\Bigr)\,\|u\|_{\varepsilon}.
	\end{equation}
	In particular,
	\[
	\left|\int_{\mathbb{R}^{N}}\sum_{i=1}^{k}\bigl(V(a_i)-V(x)\bigr)\,
	U^i_{\varepsilon, y^i_{\varepsilon}}(x)\,u(x)\,dx\right|
	\leq C\,\varepsilon^{\frac{N}{2}}\Bigl(\varepsilon^{m}
	+\sum_{i=1}^{k}\bigl|y^i_{\varepsilon}-a_i\bigr|^{m}\Bigr)\,\|u\|_{\varepsilon}.
	\]
\end{Lem}

\begin{proof}
	Fix $i\in\{1,\ldots,k\}$ and set, for simplicity,
	\[
	U^i_{\varepsilon}(x):=U^i_{\varepsilon, y^i_{\varepsilon}}(x).
	\]
	We split
	\[
	\int_{\mathbb{R}^{N}}\bigl(V(a_i)-V(x)\bigr)\,U^i_{\varepsilon}(x)\,u(x)\,dx
	=I_{1,i}+I_{2,i},
	\]
	where
	\[
	I_{1,i}:=\int_{B_{d}(a_i)}\bigl(V(a_i)-V(x)\bigr)\,U^i_{\varepsilon}(x)\,u(x)\,dx,
	\qquad
	I_{2,i}:=\int_{\mathbb{R}^{N}\setminus B_{d}(a_i)}\bigl(V(a_i)-V(x)\bigr)\,U^i_{\varepsilon}(x)\,u(x)\,dx,
	\]
	for some small $d>0$ such that $(V_3)$ holds in $B_{d}(a_i)$.
	
	By $(V_3)$ we have $|V(a_i)-V(x)|\le C|x-a_i|^{m}$ for $x\in B_{d}(a_i)$, hence by H\"older's inequality,
	\begin{equation}\label{eq:I1i-start}
		\begin{aligned}
			|I_{1,i}|
			&\le C\int_{B_{d}(a_i)}|x-a_i|^{m}\,|U^i_{\varepsilon}(x)|\,|u(x)|\,dx\\
			&\le C\Biggl(\int_{B_{d}(a_i)}|x-a_i|^{2m}\,|U^i_{\varepsilon}(x)|^{2}\,dx\Biggr)^{\!1/2}
			\Biggl(\int_{B_{d}(a_i)}u(x)^{2}\,dx\Biggr)^{\!1/2}.
		\end{aligned}
	\end{equation}
	Changing variables $x=\varepsilon z + y^i_{\varepsilon}$ gives
	\[
	\int_{B_{d}(a_i)}|x-a_i|^{2m}\,|U^i_{\varepsilon}(x)|^{2}\,dx
	=\varepsilon^{N}\int_{A_{i,\varepsilon}}
	\bigl|\varepsilon z + y^i_{\varepsilon}-a_i\bigr|^{2m}\,|U^i(z)|^{2}\,dz,
	\]
	where $A_{i,\varepsilon}$ is a ball of radius $O(1/\varepsilon)$ centered at the origin.
	Since
	\[
	\bigl|\varepsilon z + y^i_{\varepsilon}-a_i\bigr|^{2m}
	\le C\Bigl(\varepsilon^{2m}|z|^{2m}+\bigl|y^i_{\varepsilon}-a_i\bigr|^{2m}\Bigr),
	\]
	we obtain
	\[
	\int_{B_{d}(a_i)}|x-a_i|^{2m}\,|U^i_{\varepsilon}(x)|^{2}\,dx
	\le C\,\varepsilon^{N}\Bigl(\varepsilon^{2m}+\bigl|y^i_{\varepsilon}-a_i\bigr|^{2m}\Bigr),
	\]
	and therefore
	\[
	\Biggl(\int_{B_{d}(a_i)}|x-a_i|^{2m}\,|U^i_{\varepsilon}(x)|^{2}\,dx\Biggr)^{\!1/2}
	\le C\,\varepsilon^{\frac{N}{2}}\Bigl(\varepsilon^{m}+\bigl|y^i_{\varepsilon}-a_i\bigr|^{m}\Bigr).
	\]
	Moreover, since $\inf_{\mathbb{R}^{N}}V>0$, the $H_{\varepsilon}$–norm controls the $L^{2}$–norm, and hence
	\[
	\Biggl(\int_{B_{d}(a_i)}u(x)^{2}\,dx\Biggr)^{\!1/2}\le C\,\|u\|_{\varepsilon}.
	\]
	Inserting these bounds into \eqref{eq:I1i-start} yields
	\begin{equation}\label{eq:I1i-final}
		|I_{1,i}|
		\le C\,\varepsilon^{\frac{N}{2}}\Bigl(\varepsilon^{m}+\bigl|y^i_{\varepsilon}-a_i\bigr|^{m}\Bigr)\,\|u\|_{\varepsilon}.
	\end{equation}
	
	In $\mathbb{R}^{N}\setminus B_{d}(a_i)$ the function $V$ is bounded, so
	\[
	|I_{2,i}|\le C\int_{\mathbb{R}^{N}\setminus B_{d}(a_i)}|U^i_{\varepsilon}(x)|\,|u(x)|\,dx
	\le C\Biggl(\int_{\mathbb{R}^{N}\setminus B_{d}(a_i)}|U^i_{\varepsilon}(x)|^{2}\,dx\Biggr)^{\!1/2}\|u\|_{2}.
	\]
	Using again the equivalence between $\|\cdot\|_{2}$ and $\|\cdot\|_{\varepsilon}$, we get
	\[
	|I_{2,i}|\le C\Biggl(\int_{\mathbb{R}^{N}\setminus B_{d}(a_i)}|U^i_{\varepsilon}(x)|^{2}\,dx\Biggr)^{\!1/2}\|u\|_{\varepsilon}.
	\]
	We now estimate the tail of $U^i_{\varepsilon}$. Changing variables as before,
	\[
	\int_{\mathbb{R}^{N}\setminus B_{d}(a_i)}|U^i_{\varepsilon}(x)|^{2}\,dx
	=\varepsilon^{N}\int_{\{|z|\ge R_{\varepsilon}\}}|U^i(z)|^{2}\,dz,
	\]
	where $R_{\varepsilon}\sim d/\varepsilon$ as $\varepsilon\to 0$. Since $U^i$ decays polynomially,
	\[
	|U^i(z)|\le \frac{C}{(1+|z|)^{N+2s}},
	\]
	we have, for all sufficiently large $R$,
	\[
	\int_{\{|z|\ge R\}}|U^i(z)|^{2}\,dz
	\le C\,R^{-(N+4s)}.
	\]
	Assuming $m<(N+4s)/2$, we can estimate
	\[
	R^{-(N+4s)}\le C\,R^{-2m}
	\]
	for all $R$ large, and hence
	\[
	\int_{\{|z|\ge R_{\varepsilon}\}}|U^i(z)|^{2}\,dz
	\le C\,R_{\varepsilon}^{-2m}
	\le C\Bigl(\frac{\varepsilon}{d}\Bigr)^{2m}.
	\]
	Therefore,
	\[
	\int_{\mathbb{R}^{N}\setminus B_{d}(a_i)}|U^i_{\varepsilon}(x)|^{2}\,dx
	\le C\,\varepsilon^{N}\varepsilon^{2m}
	= C\,\varepsilon^{N+2m},
	\]
	and so
	\[
	|I_{2,i}|\le C\,\varepsilon^{\frac{N}{2}+m}\,\|u\|_{\varepsilon}.
	\]
	Combining this with \eqref{eq:I1i-final} we obtain
	\[
	\left|\int_{\mathbb{R}^{N}}\bigl(V(a_i)-V(x)\bigr)\,U^i_{\varepsilon}(x)\,u(x)\,dx\right|
	\le C\,\varepsilon^{\frac{N}{2}}\Bigl(\varepsilon^{m}+\bigl|y^i_{\varepsilon}-a_i\bigr|^{m}\Bigr)\,\|u\|_{\varepsilon},
	\]
	namely \eqref{eq5.1}.
	
	Let $\bar{d}\in(0,\delta]$ be fixed and assume $\delta>0$ so small that
	$B_{\bar{d}}(y^i_{\varepsilon})\subset B_{d}(a_i)$ for all $\varepsilon$ small. By $(V_3)$,
	\[
	\left|\frac{\partial V(x)}{\partial x_{j}}\right|
	\le C|x-a_i|^{m-1}, \qquad x\in B_{d}(a_i),
	\]
	and hence, by H\"older's inequality,
	\[
	\begin{aligned}
		\left|\int_{B_{\bar{d}}(y^i_{\varepsilon})}
		\frac{\partial V(x)}{\partial x_{j}}\,
		U^i_{\varepsilon}(x)\,u(x)\,dx\right|
		&\le C\int_{B_{\bar{d}}(y^i_{\varepsilon})}|x-a_i|^{m-1}|U^i_{\varepsilon}(x)|\,|u(x)|\,dx\\
		&\le C\Biggl(\int_{B_{d}(a_i)}|x-a_i|^{2(m-1)}|U^i_{\varepsilon}(x)|^{2}\,dx\Biggr)^{\!1/2}
		\Biggl(\int_{B_{d}(a_i)}u(x)^{2}\,dx\Biggr)^{\!1/2}.
	\end{aligned}
	\]
	Arguing as above with $m$ replaced by $m-1$ we obtain
	\[
	\Biggl(\int_{B_{d}(a_i)}|x-a_i|^{2(m-1)}|U^i_{\varepsilon}(x)|^{2}\,dx\Biggr)^{\!1/2}
	\le C\,\varepsilon^{\frac{N}{2}}\Bigl(\varepsilon^{m-1}+\bigl|y^i_{\varepsilon}-a_i\bigr|^{m-1}\Bigr),
	\]
	and hence
	\[
	\left|\int_{B_{\bar{d}}(y^i_{\varepsilon})}
	\frac{\partial V(x)}{\partial x_{j}}\,
	U^i_{\varepsilon}(x)\,u(x)\,dx\right|
	\le C\,\varepsilon^{\frac{N}{2}}\Bigl(\varepsilon^{m-1}+\bigl|y^i_{\varepsilon}-a_i\bigr|^{m-1}\Bigr)\,\|u\|_{\varepsilon},
	\]
	that is \eqref{eq5.1-der}.

	Summing \eqref{eq5.1} over $i=1,\ldots,k$ we also obtain the last inequality in the statement. The proof is complete. 
\end{proof}

\begin{Lem}\label{Lem5.2}
	For any fixed integer $l\in\mathbb{N}^{+}$, suppose that $\{u_i(x)\}_{i=1}^l$ satisfies
	\[
	\int_{\mathbb{R}^{N}} |u_i(x)|\,dx < +\infty,\qquad i=1,\ldots,l.
	\]
	Then for every $x_0\in\mathbb{R}^N$ there exist a constant $r_0>0$, a radius $d\in(0,r_0)$, and a constant $C>0$ such that
	\begin{equation}\label{eq5.4}
		\int_{\partial B_d(x_0)} |u_i(x)|\,d\sigma
		\;\le\; C \int_{\mathbb{R}^N} |u_i(x)|\,dx,
		\qquad \text{for all } i=1,\ldots,l.
	\end{equation}
\end{Lem}

\begin{proof}
	Let
	\[
	M_i := \int_{\mathbb{R}^N} |u_i(x)|\,dx,\qquad i=1,\ldots,l,
	\]
	and fix a small $r_0>0$. Then
	\begin{equation}\label{eq5.5}
		\int_{B_{r_0}(x_0)} \sum_{i=1}^{l} |u_i(x)|\,dx
		\;\le\; \sum_{i=1}^{l} M_i.
	\end{equation}
	On the other hand, by polar coordinates we have
	\begin{equation}\label{eq5.6}
		\int_{B_{r_0}(x_0)} \sum_{i=1}^{l} |u_i(x)|\,dx
		= \int_0^{r_0} \int_{\partial B_r(x_0)} \sum_{i=1}^{l} |u_i(x)|\,d\sigma\,dr.
	\end{equation}
	Combining \eqref{eq5.5} and \eqref{eq5.6}, we obtain
	\[
	\int_0^{r_0} \int_{\partial B_r(x_0)} \sum_{i=1}^{l} |u_i(x)|\,d\sigma\,dr
	\;\le\; \sum_{i=1}^{l} M_i.
	\]
	Hence there exists $d\in(0,r_0)$ such that
	\begin{equation}\label{eq5.7}
		\int_{\partial B_d(x_0)} \sum_{i=1}^{l} |u_i(x)|\,d\sigma
		\;\le\; \frac{1}{r_0} \sum_{i=1}^{l} M_i.
	\end{equation}
	In particular, for each $i=1,\ldots,l$,
	\[
	\int_{\partial B_d(x_0)} |u_i(x)|\,d\sigma
	\;\le\; \int_{\partial B_d(x_0)} \sum_{j=1}^{l} |u_j(x)|\,d\sigma
	\;\le\; \frac{1}{r_0} \sum_{j=1}^{l} M_j.
	\]
	Therefore \eqref{eq5.4} holds with
	\[
	C := \max_{1\le i\le l} \frac{\sum_{j=1}^{l} M_j}{r_0 M_i},
	\]
	which is finite whenever $M_i>0$ (and \eqref{eq5.4} is trivial if $M_i=0$). This completes the proof.
\end{proof}

Applying Lemma~\ref{Lem5.2} with $l=1$ and
\[
u_1(x)
= \varepsilon^{2s} \big|(-\Delta)^{\frac{s}{2}}\varphi_{\varepsilon}(x)\big|^{2}
+ |\varphi_{\varepsilon}(x)|^{2},
\]
we obtain a radius $d_{\varepsilon}\in(0,2)$ such that
\begin{equation}\label{eq5.8}
	\int_{\partial B_{d_{\varepsilon}}(y_{\varepsilon})}
	\Big(\varepsilon^{2s} \big|(-\Delta)^{\frac{s}{2}}\varphi_{\varepsilon}\big|^{2}
	+ |\varphi_{\varepsilon}|^{2}\Big)\,d\sigma
	\;\le\; C \|\varphi_{\varepsilon}\|_{\varepsilon}^{2},
\end{equation}
for some constant $C>0$ independent of $\varepsilon$.

By the elementary inequality $|a+b|^{2}\le 2(|a|^{2}+|b|^{2})$, we have
\[
\int_{\partial B_{d_{\varepsilon}}(y_{\varepsilon})}
\big|(-\Delta)^{\frac{s}{2}} u_{\varepsilon}\big|^{2}\,d\sigma
\le 2\int_{\partial B_{d_{\varepsilon}}(y_{\varepsilon})}
\big|(-\Delta)^{\frac{s}{2}} U_{\varepsilon,y_{\varepsilon}}\big|^{2}\,d\sigma
+ 2\int_{\partial B_{d_{\varepsilon}}(y_{\varepsilon})}
\big|(-\Delta)^{\frac{s}{2}} \varphi_{\varepsilon}\big|^{2}\,d\sigma.
\]

By Proposition~\ref{Pro1.1} and the polynomial decay of the ground states $U^{i}$
at infinity, there exist constants $d_0\in(0,2)$, $C>0$ and $\gamma_0>0$ such that,
for all sufficiently small $\varepsilon>0$ and all $0<d_{\varepsilon}\le d_0$,
\begin{equation}\label{eq5.9}
	\sup_{x\in \partial B_{d_{\varepsilon}}(y_{\varepsilon})}
	\Big( \big|U_{\varepsilon,y_{\varepsilon}}(x)\big|
	+ \big|(-\Delta)^{\frac{s}{2}} U_{\varepsilon,y_{\varepsilon}}(x)\big| \Big)
	\le C\,\varepsilon^{\gamma_0}.
\end{equation}
In particular, for every $\gamma\in(0,\gamma_0)$ we have
\begin{equation}\label{eq5.10}
	\sup_{x\in \partial B_{d_{\varepsilon}}(y_{\varepsilon})}
	\Big( \big|U_{\varepsilon,y_{\varepsilon}}(x)\big|
	+ \big|(-\Delta)^{\frac{s}{2}} U_{\varepsilon,y_{\varepsilon}}(x)\big| \Big)
	= o (\varepsilon^{\gamma})
	\quad \text{as }\varepsilon\to 0.
\end{equation}
Combining \eqref{eq5.8}–\eqref{eq5.10}, we obtain
\begin{equation}\label{eq5.11}
	\varepsilon^{2s}
	\int_{\partial B_{d_{\varepsilon}}(y_{\varepsilon})}
	\big|(-\Delta)^{\frac{s}{2}} u_{\varepsilon}\big|^{2}\,d\sigma
	= O\!\left(\|\varphi_{\varepsilon}\|_{\varepsilon}^{2}
	+ \varepsilon^{2s+2\gamma_0}\right)
	= O\!\left(\|\varphi_{\varepsilon}\|_{\varepsilon}^{2}
	+ \varepsilon^{\gamma}\right)
\end{equation}
for some $\gamma>0$ independent of $\varepsilon$.

Moreover, using again the decay of $U^{i}$ and the scaling
$U_{\varepsilon,y_{\varepsilon}}(x)
=U\big(\frac{x-y_{\varepsilon}}{\varepsilon}\big)$, we have, for any fixed
$q_{1},q_{2}>0$ such that $\int_{\mathbb{R}^N}|U|^{q_1+q_2}<\infty$,
\begin{equation}\label{eq5.12}
	\int_{\mathbb{R}^{N}} \big|U_{\varepsilon,y_{\varepsilon}}(x)\big|^{q_{1}+q_{2}}\,dx
	= O(\varepsilon^{N}),
\end{equation}
and
\begin{equation}\label{eq5.13}
	\varepsilon^{2s} \int_{\mathbb{R}^{N}}
	\big|(-\Delta)^{\frac{s}{2}} U_{\varepsilon,y_{\varepsilon}}(x)\big|^{2}\,dx
	= O(\varepsilon^{N}).
\end{equation}

Now we can improve the estimate for the asymptotic behavior of $y_{\varepsilon}$ with respect to $\varepsilon$.

\begin{Lem}\label{Lem5.3}
	Assume that $V$ satisfies $(V_1)$–$(V_3)$. Let
	\[
	u_{\varepsilon}
	=U_{\varepsilon,y_{\varepsilon}}+\varphi_{\varepsilon}
	\]
	be a $k$-peak solution derived as in Theorem~\ref{Thm2.1}, with
	\[
	U_{\varepsilon,y_{\varepsilon}}(x)
	=\sum_{i=1}^{k}U^i_{\varepsilon,y^i_{\varepsilon}}(x),
	\qquad
	U^i_{\varepsilon,y^i_{\varepsilon}}(x)
	=U^i\Big(\frac{x-y^i_{\varepsilon}}{\varepsilon}\Big).
	\]
	Then, for each $i=1,\ldots,k$,
	\[
	\big|y^i_{\varepsilon}-a_i\big|=o(\varepsilon)
	\quad\text{as }\varepsilon\to 0.
	\]
\end{Lem}

\begin{proof}
	The proof is similar to the single-peak case in \cite{R-Yang2}; we include it here for completeness.
	
	Fix $i\in\{1,\ldots,k\}$ and a coordinate index $j\in\{1,\ldots,N\}$. We write
	\[
	\varphi_{\varepsilon}:=\varphi_{\varepsilon,y_{\varepsilon}}.
	\]
	We apply the Pohoz\v{a}ev-type identity \eqref{eq3.1} to $u=u_{\varepsilon}$ with
	\[
	\Omega=B_{d}\big(y^i_{\varepsilon}\big),
	\]
	where $d\in(1,2)$ is chosen as in \eqref{eq5.8}. We obtain
	\begin{equation}\label{eq5.14}
		\int_{B_{d}(y^i_{\varepsilon})} \frac{\partial V}{\partial x_{j}}
		\big(U_{\varepsilon,y_{\varepsilon}}+\varphi_{\varepsilon}\big)^{2}\,dx
		=\sum_{\ell=1}^{3} I_{\ell},
	\end{equation}
	where
	\[
	\begin{aligned}
		I_{1}
		&=
		\left(\varepsilon^{2s} a+\varepsilon^{4s-N} b
		\int_{\mathbb{R}^{N}}\big|(-\Delta)^{\frac{s}{2}} u_{\varepsilon}\big|^{2}\,dx\right)
		\int_{\partial B_{d}(y^i_{\varepsilon})}
		\left(\big|(-\Delta)^{\frac{s}{2}} u_{\varepsilon}\big|^{2} \nu_{j}
		-2 \frac{\partial u_{\varepsilon}}{\partial \nu}
		\frac{\partial u_{\varepsilon}}{\partial x_{j}}\right)d\sigma,\\[0.4em]
		I_{2}
		&=\int_{\partial B_{d}(y^i_{\varepsilon})} V(x) u_{\varepsilon}^{2}(x)\,\nu_{j}\,d\sigma,\\[0.4em]
		I_{3}
		&=-\frac{2}{p+1}
		\int_{\partial B_{d}(y^i_{\varepsilon})} u_{\varepsilon}^{p+1}(x)\,\nu_{j}\,d\sigma.
	\end{aligned}
	\]
	
	By Theorem~\ref{Thm2.1},
	\[
	\varepsilon^{2s} a+\varepsilon^{4s-N} b
	\int_{\mathbb{R}^{N}}\big|(-\Delta)^{\frac{s}{2}} u_{\varepsilon}\big|^{2}\,dx
	=O(\varepsilon^{2s}).
	\]
	Using \eqref{eq5.11} and standard estimates for
	$\partial_\nu u_{\varepsilon},\partial_{x_j}u_{\varepsilon}$ on $\partial B_d(y^i_\varepsilon)$,
	we deduce
	\[
	I_{1}=O\big(\|\varphi_{\varepsilon}\|_{\varepsilon}^{2}+\varepsilon^{\gamma}\big)
	\]
	for some $\gamma>0$ independent of $\varepsilon$.
	
	Similarly, since $V$ is bounded and using
	\eqref{eq5.9}–\eqref{eq5.10} together with \eqref{eq5.8}, we obtain
	\[
	I_{2}=O\big(\|\varphi_{\varepsilon}\|_{\varepsilon}^{2}+\varepsilon^{\gamma}\big).
	\]
	
	For $I_{3}$, we write
	\[
	u_{\varepsilon}^{p+1}
	=(U_{\varepsilon,y_{\varepsilon}}+\varphi_{\varepsilon})^{p+1}
	=U_{\varepsilon,y_{\varepsilon}}^{p+1}
	+O\big(|\varphi_{\varepsilon}|^{p+1}\big)
	\]
	on $\partial B_{d}(y^i_{\varepsilon})$. By the decay of $U^i$ and \eqref{eq5.9}–\eqref{eq5.10} we have
	\[
	\int_{\partial B_{d}(y^i_{\varepsilon})}
	\big|U_{\varepsilon,y_{\varepsilon}}(x)\big|^{p+1}\,d\sigma
	=O(\varepsilon^{\gamma}),
	\]
	for some $\gamma>0$. Applying Lemma~\ref{Lem5.2} to
	$u(x)=\varphi_{\varepsilon}(x)$ and using the Sobolev embedding, we obtain
	\[
	\int_{\partial B_{d}(y^i_{\varepsilon})} |\varphi_{\varepsilon}(x)|^{p+1}\,d\sigma
	\le C\int_{\mathbb{R}^{N}}|\varphi_{\varepsilon}(x)|^{p+1}\,dx
	\le C\|\varphi_{\varepsilon}\|_{\varepsilon}^{2}.
	\]
	Hence
	\[
	I_{3}=O\big(\|\varphi_{\varepsilon}\|_{\varepsilon}^{2}+\varepsilon^{\gamma}\big).
	\]
	
	Combining these estimates, we obtain from \eqref{eq5.14} that
	\begin{equation}\label{eq5.15}
		\sum_{\ell=1}^{3} I_{\ell}
		=O\big(\|\varphi_{\varepsilon}\|_{\varepsilon}^{2}
		+\varepsilon^{\gamma}\big).
	\end{equation}
	
	We now estimate the left-hand side of \eqref{eq5.14}. Decompose
	\[
	U_{\varepsilon,y_{\varepsilon}}
	=\sum_{h=1}^{k}U^h_{\varepsilon,y^h_{\varepsilon}},
	\]
	and observe that, by Lemma~\ref{Lem5.1} and the polynomial decay of the
	profiles $U^h$, all terms involving either $\varphi_{\varepsilon}$ or
	$U^h_{\varepsilon,y^h_{\varepsilon}}$ with $h\neq i$ give a remainder of the
	form
	\[
	O\big(\|\varphi_{\varepsilon}\|_{\varepsilon}^{2}
	+\varepsilon^{N+2m-2}
	+\varepsilon^{N}|y^i_{\varepsilon}-a_i|^{2m-2}\big).
	\]
	Hence
	\begin{equation}\label{eq5.16}
		\begin{aligned}
			\int_{B_{d}(y^i_{\varepsilon})}
			\frac{\partial V}{\partial x_{j}}
			\big(U_{\varepsilon,y_{\varepsilon}}+\varphi_{\varepsilon}\big)^{2}\,dx
			&=
			\int_{B_{d}(y^i_{\varepsilon})}
			\frac{\partial V}{\partial x_{j}}
			\big|U^i_{\varepsilon,y^i_{\varepsilon}}\big|^{2}\,dx\\
			&\quad
			+O\big(\|\varphi_{\varepsilon}\|_{\varepsilon}^{2}
			+\varepsilon^{N+2m-2}
			+\varepsilon^{N}|y^i_{\varepsilon}-a_i|^{2m-2}\big).
		\end{aligned}
	\end{equation}
	
	By $(V_3)$, near $a_i$ we have the expansion
	\[
	\frac{\partial V}{\partial x_{j}}(x)
	= m\,c_{i,j}\,|x_{j}-a_{i,j}|^{m-2}(x_{j}-a_{i,j})
	+ O(|x-a_i|^{m}),
	\]
	with $c_{i,j}\neq 0$. Therefore
	\[
	\begin{aligned}
		\int_{B_{d}(y^i_{\varepsilon})}
		\frac{\partial V}{\partial x_{j}}
		\big|U^i_{\varepsilon,y^i_{\varepsilon}}(x)\big|^{2}\,dx
		&=
		m\,c_{i,j}
		\int_{B_{d}(y^i_{\varepsilon})}
		|x_{j}-a_{i,j}|^{m-2}(x_{j}-a_{i,j})
		\big|U^i_{\varepsilon,y^i_{\varepsilon}}(x)\big|^{2}\,dx\\
		&\quad
		+ O\!\left(\int_{B_{d}(y^i_{\varepsilon})}
		|x-a_i|^{m}\big|U^i_{\varepsilon,y^i_{\varepsilon}}(x)\big|^{2}\,dx\right).
	\end{aligned}
	\]
	
	Set $x=\varepsilon z+y^i_{\varepsilon}$, so that $dx=\varepsilon^{N}dz$ and
	$U^i_{\varepsilon,y^i_{\varepsilon}}(x)=U^i(z)$. Then
	\[
	\begin{aligned}
		&\int_{B_{d}(y^i_{\varepsilon})}
		\frac{\partial V}{\partial x_{j}}
		\big|U^i_{\varepsilon,y^i_{\varepsilon}}(x)\big|^{2}\,dx\\
		&\qquad
		= m\,c_{i,j}\,\varepsilon^{N}
		\int_{B_{d/\varepsilon}(0)}
		\big|\varepsilon z_{j}+y^i_{\varepsilon,j}-a_{i,j}\big|^{m-2}
		\big(\varepsilon z_{j}+y^i_{\varepsilon,j}-a_{i,j}\big)
		\big(U^i(z)\big)^{2}\,dz\\
		&\qquad\quad
		+ O\Big(\varepsilon^{N}\big(\varepsilon^{m}+|y^i_{\varepsilon}-a_i|^{m}\big)\Big).
	\end{aligned}
	\]
	Using again the decay of $U^i$ and letting $d/\varepsilon\to+\infty$, we can
	extend the integral to $\mathbb{R}^{N}$, and obtain
	\begin{equation}\label{eq5.17}
		\begin{aligned}
			\int_{B_{d}(y^i_{\varepsilon})}
			\frac{\partial V}{\partial x_{j}}
			\big|U^i_{\varepsilon,y^i_{\varepsilon}}(x)\big|^{2}\,dx
			&=
			m\,c_{i,j}\,\varepsilon^{N}
			\int_{\mathbb{R}^{N}}
			\big|\varepsilon z_{j}+y^i_{\varepsilon,j}-a_{i,j}\big|^{m-2}
			\big(\varepsilon z_{j}+y^i_{\varepsilon,j}-a_{i,j}\big)
			\big(U^i(z)\big)^{2}\,dz\\
			&\quad
			+ O\Big(\varepsilon^{N}\big(\varepsilon^{m}+|y^i_{\varepsilon}-a_i|^{m}\big)\Big).
		\end{aligned}
	\end{equation}
	
	Combining \eqref{eq5.14}, \eqref{eq5.15}, \eqref{eq5.16} and \eqref{eq5.17}, and
	dividing by $\varepsilon^{N}$, we arrive at
	\[
	\begin{aligned}
		m\,c_{i,j}
		\int_{\mathbb{R}^{N}}
		\big|\varepsilon z_{j}+y^i_{\varepsilon,j}-a_{i,j}\big|^{m-2}
		\big(\varepsilon z_{j}+y^i_{\varepsilon,j}-a_{i,j}\big)
		\big(U^i(z)\big)^{2}\,dz
		=O\Big(\varepsilon^{-N}\|\varphi_{\varepsilon}\|_{\varepsilon}^{2}
		+\varepsilon^{m}+|y^i_{\varepsilon}-a_i|^{m}\Big).
	\end{aligned}
		\]
		
		By Lemma~\ref{Lem5.1} and $(V_3)$ we know that there exists
		$\tau\in(0,1/m)$ such that
		\[
		\|\varphi_{\varepsilon}\|_{\varepsilon}
		=O\Big(\varepsilon^{\frac{N}{2}}\big(\varepsilon^{m-\tau}
		+|y^i_{\varepsilon}-a_i|^{m(1-\tau)}\big)\Big),
		\]
		hence
		\[
		\varepsilon^{-N}\|\varphi_{\varepsilon}\|_{\varepsilon}^{2}
		=O\Big(\varepsilon^{2m-2\tau}
		+|y^i_{\varepsilon}-a_i|^{2m(1-\tau)}\Big).
		\]
		Therefore we obtain
		\begin{equation}\label{eq5.18}
			\int_{\mathbb{R}^{N}}
			\big|\varepsilon z_{j}+y^i_{\varepsilon,j}-a_{i,j}\big|^{m-2}
			\big(\varepsilon z_{j}+y^i_{\varepsilon,j}-a_{i,j}\big)
			\big(U^i(z)\big)^{2}\,dz
			=O\Big(\varepsilon^{m-\tau}+|y^i_{\varepsilon}-a_i|^{m(1-\tau)}\Big).
		\end{equation}
		
		Let $m^{*}=\min\{m,2\}$. For any $e,f\in\mathbb{R}$ and $m>1$, there holds
		\[
		\big||e+f|^{m}-|e|^{m}-m|e|^{m-2} e f\big|
		\le C\big(|e|^{m-m^{*}}|f|^{m^{*}}+|f|^{m}\big)
		\]
		for some constant $C>0$ depending only on $m$. Taking
		\[
		e=\varepsilon z_{j}+y^i_{\varepsilon,j}-a_{i,j},\qquad
		f=-\varepsilon z_{j},
		\]
		we have $e+f=y^i_{\varepsilon,j}-a_{i,j}$ and hence
		\begin{equation}\label{eq5.19}
			\begin{aligned}
				\big|y^i_{\varepsilon,j}-a_{i,j}\big|^{m}
				&\le
				\big|\varepsilon z_{j}+y^i_{\varepsilon,j}-a_{i,j}\big|^{m}
				-m\big|\varepsilon z_{j}+y^i_{\varepsilon,j}-a_{i,j}\big|^{m-2}
				\big(\varepsilon z_{j}+y^i_{\varepsilon,j}-a_{i,j}\big)\varepsilon z_{j}\\
				&\quad
				+C\Big(\big|y^i_{\varepsilon,j}-a_{i,j}\big|^{m-m^{*}}
				|\varepsilon z_{j}|^{m^{*}}+|\varepsilon z_{j}|^{m}\Big)\\
				&\le
				m\big|\varepsilon z_{j}+y^i_{\varepsilon,j}-a_{i,j}\big|^{m-2}
				\big(\varepsilon z_{j}+y^i_{\varepsilon,j}-a_{i,j}\big)y^i_{\varepsilon,j}\\
				&\quad
				+C\Big(|\varepsilon z_{j}|^{m}
				+\big|y^i_{\varepsilon,j}-a_{i,j}\big|^{m-m^{*}}|\varepsilon z_{j}|^{m^{*}}\Big).
			\end{aligned}
		\end{equation}
		
		Multiplying \eqref{eq5.19} by $(U^i(z))^{2}$ and integrating over
		$\mathbb{R}^{N}$, we obtain
		\[
		\begin{aligned}
			\big|y^i_{\varepsilon,j}-a_{i,j}\big|^{m}
			\int_{\mathbb{R}^{N}}(U^i(z))^{2}\,dz
			&\le
			m y^i_{\varepsilon,j}
			\int_{\mathbb{R}^{N}}
			\big|\varepsilon z_{j}+y^i_{\varepsilon,j}-a_{i,j}\big|^{m-2}
			\big(\varepsilon z_{j}+y^i_{\varepsilon,j}-a_{i,j}\big)
			(U^i(z))^{2}\,dz\\
			&\quad
			+O\Big(\varepsilon^{m}
			+\big|y^i_{\varepsilon,j}-a_{i,j}\big|^{m-m^{*}}\varepsilon^{m^{*}}\Big).
		\end{aligned}
		\]
		Using \eqref{eq5.18}, we infer
		\[
		\big|y^i_{\varepsilon,j}-a_{i,j}\big|^{m}
		=O\Big(
		\big(\varepsilon^{m-\tau}+|y^i_{\varepsilon}-a_i|^{m(1-\tau)}\big)
		\big|y^i_{\varepsilon,j}-a_{i,j}\big|
		+\varepsilon^{m}
		+\big|y^i_{\varepsilon,j}-a_{i,j}\big|^{m-m^{*}}\varepsilon^{m^{*}}
		\Big).
		\]
		Summing over $j=1,\ldots,N$ and using $\tau>0$ with $m\tau<1$, we can apply
		the $\varepsilon$–Young inequality
		\[
		XY\le \delta X^{m}
		+\delta^{-\frac{m}{m-1}}Y^{\frac{m}{m-1}},
		\qquad \forall\,\delta,X,Y>0,
		\]
		to absorb the terms involving $|y^i_{\varepsilon}-a_i|^{m}$ into the left-hand
		side. We conclude that
		\[
		|y^i_{\varepsilon}-a_i|=O(\varepsilon)
		\quad\text{as }\varepsilon\to 0.
		\]
		
		It remains to prove that in fact $|y^i_{\varepsilon}-a_i|=o(\varepsilon)$. Assume
		by contradiction that there exist a sequence $\varepsilon_{k}\to 0$ and
		\[
		\frac{y^i_{\varepsilon_{k}}-a_i}{\varepsilon_{k}}\to A
		\in\mathbb{R}^{N}\setminus\{0\},
		\qquad A=(A_{1},\ldots,A_{N}).
		\]
		Then \eqref{eq5.18} yields
		\[
		\int_{\mathbb{R}^{N}}
		\big|\varepsilon_{k} z_{j}+y^i_{\varepsilon_{k},j}-a_{i,j}\big|^{m-2}
		\big(\varepsilon_{k} z_{j}+y^i_{\varepsilon_{k},j}-a_{i,j}\big)
		\big(U^i(z)\big)^{2}\,dz
		=O(\varepsilon_{k}^{m-\tau}).
		\]
		Passing to the limit as $k\to\infty$, we obtain
		\[
		\int_{\mathbb{R}^{N}}
		\big|z_{j}+A_{j}\big|^{m-2}
		\big(z_{j}+A_{j}\big)\big(U^i(z)\big)^{2}\,dz=0.
		\]
		Since $U^i(z)=U^i(|z|)$ is radial and strictly decreasing in $|z|$, standard
		arguments (see, for instance, \cite{R-Yang2}) show that this identity cannot
		hold unless $A_{j}=0$. Hence $A=0$, which contradicts our assumption
		$A\neq 0$. Thus
		\[
		|y^i_{\varepsilon}-a_i|=o(\varepsilon),
		\]
		and the proof is complete.
	\end{proof}

As a consequence of Lemma~\ref{Lem5.3} and assumption $(V_3)$, we infer that
\begin{equation}\label{eq5.20}
	\|\varphi_{\varepsilon}\|_{\varepsilon}
	=O\big(\varepsilon^{\frac{N}{2}+m(1-\tau)}\big).
\end{equation}
Here we choose $\tau>0$ sufficiently small so that
$m\tau<1$ (as in Lemma~\ref{Lem5.3}) and, in addition,
$m(1-\tau)>1$, which is possible because $m>1$.

\par
We now prove the local uniqueness of the semiclassical bounded states obtained before. 
The argument follows by contradiction, in the spirit of
\cite{MR3426103,MR4021897}. Assume that
\[
u_{\varepsilon}^{(i)}
=U_{\varepsilon,y_{\varepsilon}^{(i)}}+\varphi_{\varepsilon}^{(i)},
\qquad i=1,2,
\]
are two distinct $k$-peak solutions of \eqref{eq1.1} constructed as in Section~3.
By the arguments in Section~3, there exists $C>0$ independent of $\varepsilon$
such that
\[
\|u_{\varepsilon}^{(i)}\|_{L^{\infty}(\mathbb{R}^{N})}\le C,
\qquad i=1,2.
\]

Set
\begin{equation*}
	\xi_{\varepsilon}
	=\frac{u_{\varepsilon}^{(1)}-u_{\varepsilon}^{(2)}}
	{\big\|u_{\varepsilon}^{(1)}-u_{\varepsilon}^{(2)}\big\|_{L^{\infty}(\mathbb{R}^{N})}}
\end{equation*}
and choose one peak of $u_{\varepsilon}^{(1)}$ as reference. More precisely, fix
an index $i_{0}\in\{1,\ldots,k\}$ and denote the corresponding center of
$u_{\varepsilon}^{(1)}$ by $y_{\varepsilon}^{i_{0},(1)}$. We then define the
rescaled function
\begin{equation*}
	\bar{\xi}_{\varepsilon}(x)
	=\xi_{\varepsilon}\big(\varepsilon x+y_{\varepsilon}^{i_{0},(1)}\big),
	\qquad x\in\mathbb{R}^{N}.
\end{equation*}
It is clear that
\begin{equation*}
	\|\bar{\xi}_{\varepsilon}\|_{L^{\infty}(\mathbb{R}^{N})}=1.
\end{equation*}
Moreover, by Claim~3 in Section~3, there holds
\begin{equation}\label{eq5.21}
	\bar{\xi}_{\varepsilon}(x)\longrightarrow 0
	\quad\text{as }|x|\to\infty,
\end{equation}
uniformly with respect to sufficiently small $\varepsilon>0$.

We will derive a contradiction by showing that
\[
\|\bar{\xi}_{\varepsilon}\|_{L^{\infty}(\mathbb{R}^{N})}\longrightarrow 0
\quad\text{as }\varepsilon\to 0.
\]
In view of \eqref{eq5.21}, it is enough to prove that for any fixed $R>0$,
\begin{equation}\label{eq5.22}
	\|\bar{\xi}_{\varepsilon}\|_{L^{\infty}\big(B_{R}(0)\big)}
	\longrightarrow 0
	\quad\text{as }\varepsilon\to 0.
\end{equation}

First we have the following estimate.

\begin{Lem}\label{Lem5.4}
	There holds
	\[
	\|\xi_{\varepsilon}\|_{\varepsilon}
	=O\big(\varepsilon^{\frac{N}{2}}\big).
	\]
\end{Lem}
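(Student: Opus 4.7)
The plan is to derive a linear PDE for $\xi_\varepsilon$ by subtracting the equations satisfied by $u_\varepsilon^{(1)}$ and $u_\varepsilon^{(2)}$, test it against $\xi_\varepsilon$ itself, and then estimate the resulting nonlinear right-hand side by exploiting the $L^\infty$-normalization of $\xi_\varepsilon$ together with the sharp concentration of the $u_\varepsilon^{(i)}$ near the peaks. To this end, set $F_\varepsilon(u):=\mathcal{E}(u)^{2s}(-\Delta)^s u+V(x)u-u^p$ with $\mathcal{E}(u)^{2s}=\varepsilon^{2s}a+\varepsilon^{4s-N}b\int_{\mathbb{R}^N}|(-\Delta)^{s/2}u|^2\,dx$, and write $u_t:=tu_\varepsilon^{(1)}+(1-t)u_\varepsilon^{(2)}$. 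Since $F_\varepsilon(u_\varepsilon^{(1)})=F_\varepsilon(u_\varepsilon^{(2)})=0$, the fundamental theorem of calculus gives $\int_0^1 DF_\varepsilon(u_t)[u_\varepsilon^{(1)}-u_\varepsilon^{(2)}]\,dt=0$, and dividing by $\|u_\varepsilon^{(1)}-u_\varepsilon^{(2)}\|_{L^\infty(\mathbb{R}^N)}$ yields a linear Kirchhoff-type equation for $\xi_\varepsilon$.

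Pairing this equation with $\xi_\varepsilon$ itself and using that the $b$-term is a perfect square, one obtains the identity
\begin{equation*}
\int_0^1 \mathcal{E}(u_t)^{2s}\|(-\Delta)^{s/2}\xi_\varepsilon\|_2^2\,dt + \int_{\mathbb{R}^N}V\xi_\varepsilon^2\,dx + 2\varepsilon^{4s-N}b\int_0^1\Bigl(\int_{\mathbb{R}^N}(-\Delta)^{s/2}u_t\,(-\Delta)^{s/2}\xi_\varepsilon\,dx\Bigr)^{2}\!dt = p\int_0^1\!\int_{\mathbb{R}^N} u_t^{p-1}\xi_\varepsilon^2\,dx\,dt.
\end{equation*}
Since $\mathcal{E}(u_t)^{2s}\geq a\varepsilon^{2s}$ and the third term on the left is non-negative, the left-hand side dominates $c\|\xi_\varepsilon\|_\varepsilon^2$ for some $c>0$ independent of $\varepsilon$, reducing the problem to showing that the right-hand side is $O(\varepsilon^N)$.

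Using $u_t\le u_\varepsilon^{(1)}+u_\varepsilon^{(2)}$ and the decomposition $u_\varepsilon^{(i)}=U_{\varepsilon,y_\varepsilon^{(i)}}+\varphi_\varepsilon^{(i)}$ with $\|\varphi_\varepsilon^{(i)}\|_\varepsilon=O(\varepsilon^{N/2+m(1-\tau)})$ coming from \eqref{eq5.20}, the leading contribution to the right-hand side reduces to $\sum_{i,j}\int_{\mathbb{R}^N}(U^j_{\varepsilon,y_\varepsilon^{(i),j}})^{p-1}\xi_\varepsilon^2\,dx$. The change of variables $z=(x-y_\varepsilon^{(i),j})/\varepsilon$ transforms each term into $\varepsilon^N\int_{\mathbb{R}^N}(U^j(z))^{p-1}\tilde\xi_\varepsilon(z)^2\,dz$, where $\tilde\xi_\varepsilon(z):=\xi_\varepsilon(\varepsilon z+y_\varepsilon^{(i),j})$ satisfies $\|\tilde\xi_\varepsilon\|_{L^\infty}\le 1$ and vanishes uniformly at infinity by \eqref{eq5.21}. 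Splitting the $z$-integral into $\{|z|\le R\}$ (where $(U^j)^{p-1}$ is locally integrable by the smoothness in Proposition~\ref{Pro1.1}(i)) and its complement (where $\tilde\xi_\varepsilon$ is uniformly small and $U^j$ decays polynomially by Proposition~\ref{Pro1.1}(iii)) produces a bound $O(1)$ for this integral uniformly in $\varepsilon$. The cross-terms between distinct peaks are negligible by \eqref{eq2.7}, while the $\varphi_\varepsilon^{(i)}$-contributions are handled by H\"older's inequality, Lemma~\ref{Lem2.1}, and the smallness \eqref{eq5.20}. Assembling these pieces gives $\|\xi_\varepsilon\|_\varepsilon^2=O(\varepsilon^N)$, which is the desired conclusion.

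The main obstacle is the non-integrability of $U^{p-1}$ at infinity in the subcritical regime $(p-1)(N+2s)\le N$, which can occur for small $p$ still in the admissible range $(1,2_s^*-1)$. In that case the uniform decay \eqref{eq5.21} alone does not suffice, because the tail $\int_{|z|\ge R}(U^j)^{p-1}\,dz$ may diverge. To overcome this I would upgrade \eqref{eq5.21} to a quantitative polynomial decay of $\tilde\xi_\varepsilon$ comparable to $U^j$ itself, by viewing the rescaled equation for $\tilde\xi_\varepsilon$ as a perturbed linear fractional Schr\"odinger equation with source concentrated near the origin and invoking the $(1+|z|)^{-(N+2s)}$ Green's function decay of $(-\Delta)^s+V(a_i)$. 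The resulting weight $(U^j)^{p-1}\tilde\xi_\varepsilon^2\lesssim (1+|z|)^{-(p+1)(N+2s)}$ is integrable for every $p>1$, closing the estimate uniformly in $\varepsilon$.
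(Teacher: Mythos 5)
Your overall strategy coincides with the paper's: linearize, test against $\xi_\varepsilon$, use the sign of the Kirchhoff ($b$) terms, and then bound $\int C_\varepsilon\xi_\varepsilon^2$. Your derivation via $u_t=tu_\varepsilon^{(1)}+(1-t)u_\varepsilon^{(2)}$ and $\int_0^1 DF_\varepsilon(u_t)[\xi_\varepsilon]\,dt=0$ is actually cleaner than what appears in print: it produces the cross-term $2\varepsilon^{4s-N}b\int_0^1\bigl(\int(-\Delta)^{s/2}u_t\,(-\Delta)^{s/2}\xi_\varepsilon\bigr)^2dt\ge 0$ with the favorable sign automatically, whereas the paper arrives at \eqref{eq5.25} by adding two regroupings \eqref{eq5.23}--\eqref{eq5.24} which, as written, carry a sign error (the cross-term should appear with a minus sign on the right-hand side; the intended conclusion is nonetheless correct once the sign is fixed).

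You have also correctly put your finger on a genuine gap in the published argument. After using $|\xi_\varepsilon|\le 1$, the paper asserts $\int_{\mathbb{R}^N}U^{p-1}_{\varepsilon,y^{(i)}_\varepsilon}\xi_\varepsilon^2\,dx\le C\varepsilon^N$, which after rescaling requires $\int_{\mathbb{R}^N}(U^j)^{p-1}\,dz<\infty$, i.e.\ $(p-1)(N+2s)>N$. This threshold $p>1+\frac{N}{N+2s}$ lies strictly inside $(1,2_s^*-1)$ for the whole admissible range $2s<N<4s$, so the bound is unjustified for $p$ near $1$, and the qualitative vanishing in \eqref{eq5.21} does not save it (the tail of $(U^j)^{p-1}$ is not even integrable against a uniformly small but merely bounded weight). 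Your proposed remedy --- upgrading $\bar\xi_\varepsilon$ to a quantitative polynomial decay comparable to $U$ --- would indeed close the gap, but you only sketch it and it requires a separate decay lemma. A more elementary repair, closer in spirit to the rest of the section, is to split the rescaled integral into $\{|z|\le R\}$ and $\{|z|>R\}$: on the first region use $|\bar\xi_\varepsilon|\le 1$ and local integrability of $(U^j)^{p-1}$ to get $C_R\varepsilon^N$; on the second bound $(U^j)^{p-1}\le CR^{-(p-1)(N+2s)}$ and pair with $\int_{\mathbb{R}^N}\bar\xi_\varepsilon^2\,dz=\varepsilon^{-N}\int\xi_\varepsilon^2\,dx\le C\varepsilon^{-N}\|\xi_\varepsilon\|_\varepsilon^2$, so that this piece is $\le CR^{-(p-1)(N+2s)}\|\xi_\varepsilon\|_\varepsilon^2$ and can be absorbed into the left-hand side for $R$ large. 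This avoids proving any new decay estimate for $\xi_\varepsilon$ and makes the step rigorous for the full range $1<p<2_s^*-1$.
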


\begin{proof}
	Recall that for $u,v\in H^{s}(\R^{N})$ one has
	\begin{equation}\label{eq:5.23a}
		\begin{aligned}
			\int_{\R^{N}}\Big(|(-\Delta)^{\frac{s}{2}}u|^{2}-|(-\Delta)^{\frac{s}{2}}v|^{2}\Big)\,dx
			&=\int_{\R^{N}}\Big((-\Delta)^{\frac{s}{2}}u+(-\Delta)^{\frac{s}{2}}v\Big)
			\Big((-\Delta)^{\frac{s}{2}}u-(-\Delta)^{\frac{s}{2}}v\Big)\,dx\\
			&=\int_{\R^{N}}(-\Delta)^{\frac{s}{2}}(u+v)\,(-\Delta)^{\frac{s}{2}}(u-v)\,dx.
		\end{aligned}
	\end{equation}
	
	Let $u_{\varepsilon}^{(i)}$, $i=1,2$, be two solutions of \eqref{eq1.1}, and set
	\[
	\xi_{\varepsilon}
	=\frac{u_{\varepsilon}^{(1)}-u_{\varepsilon}^{(2)}}{
		\|u_{\varepsilon}^{(1)}-u_{\varepsilon}^{(2)}\|_{L^{\infty}(\R^{N})}}.
	\]
	Using \eqref{eq1.1} for $u_{\varepsilon}^{(1)}$ and $u_{\varepsilon}^{(2)}$, the
	identity \eqref{eq:5.23a}, and the mean value theorem for
	$t\mapsto t^{p}$, we obtain the following equation for $\xi_{\varepsilon}$:
	\begin{equation}\label{eq5.23}
		\begin{aligned}
			&\Big(\varepsilon^{2s} a+\varepsilon^{4s-N} b
			\int_{\R^{N}}\big|(-\Delta)^{\frac{s}{2}} u_{\varepsilon}^{(1)}\big|^{2}\,dx\Big)
			(-\Delta)^s \xi_{\varepsilon}
			+V(x)\,\xi_{\varepsilon}  \\
			&\quad=\varepsilon^{4s-N} b
			\Big(\int_{\R^{N}}(-\Delta)^{\frac{s}{2}}(u_{\varepsilon}^{(1)}+u_{\varepsilon}^{(2)})
			\,(-\Delta)^{\frac{s}{2}}\xi_{\varepsilon}\,dx\Big) (-\Delta)^s u_{\varepsilon}^{(2)}
			+C_{\varepsilon}(x)\,\xi_{\varepsilon},
		\end{aligned}
	\end{equation}
	and similarly
	\begin{equation}\label{eq5.24}
		\begin{aligned}
			&\Big(\varepsilon^{2s} a+\varepsilon^{4s-N} b
			\int_{\R^{N}}\big|(-\Delta)^{\frac{s}{2}} u_{\varepsilon}^{(2)}\big|^{2}\,dx\Big)
			(-\Delta)^s \xi_{\varepsilon}
			+V(x)\,\xi_{\varepsilon}  \\
			&\quad=\varepsilon^{4s-N} b
			\Big(\int_{\R^{N}}(-\Delta)^{\frac{s}{2}}(u_{\varepsilon}^{(1)}+u_{\varepsilon}^{(2)})
			\,(-\Delta)^{\frac{s}{2}}\xi_{\varepsilon}\,dx\Big) (-\Delta)^s u_{\varepsilon}^{(1)}
			+C_{\varepsilon}(x)\,\xi_{\varepsilon},
		\end{aligned}
	\end{equation}
	where
	\[
	C_{\varepsilon}(x)
	=p\int_{0}^{1}\big(t u_{\varepsilon}^{(1)}(x)+(1-t)u_{\varepsilon}^{(2)}(x)\big)^{p-1}\,dt.
	\]
	
	Adding \eqref{eq5.23} and \eqref{eq5.24} gives
	\begin{equation}\label{eq5.25}
		\begin{aligned}
			&\Big(2\varepsilon^{2s} a
			+\varepsilon^{4s-N} b \int_{\R^{N}}\Big(
			\big|(-\Delta)^{\frac{s}{2}} u_{\varepsilon}^{(1)}\big|^{2}
			+\big|(-\Delta)^{\frac{s}{2}} u_{\varepsilon}^{(2)}\big|^{2}\Big)\,dx\Big)
			(-\Delta)^s \xi_{\varepsilon}
			+2 V(x)\,\xi_{\varepsilon} \\
			&\quad=\varepsilon^{4s-N} b
			\Big(\int_{\R^{N}}(-\Delta)^{\frac{s}{2}}(u_{\varepsilon}^{(1)}+u_{\varepsilon}^{(2)})
			\,(-\Delta)^{\frac{s}{2}}\xi_{\varepsilon}\,dx\Big)
			(-\Delta)^s\big(u_{\varepsilon}^{(1)}+u_{\varepsilon}^{(2)}\big)
			+2 C_{\varepsilon}(x)\,\xi_{\varepsilon}.
		\end{aligned}
	\end{equation}
	
	Multiplying \eqref{eq5.25} by $\xi_{\varepsilon}$ and integrating over $\R^{N}$,
	we obtain
	\[
	\begin{aligned}
		&\Big(2\varepsilon^{2s} a
		+\varepsilon^{4s-N} b \int_{\R^{N}}\Big(
		\big|(-\Delta)^{\frac{s}{2}} u_{\varepsilon}^{(1)}\big|^{2}
		+\big|(-\Delta)^{\frac{s}{2}} u_{\varepsilon}^{(2)}\big|^{2}\Big)\,dx\Big)
		\int_{\R^{N}}\big|(-\Delta)^{\tfrac{s}{2}}\xi_{\varepsilon}\big|^{2}\,dx\\
		&\qquad
		+2\int_{\R^{N}}V(x)\,\xi_{\varepsilon}^{2}\,dx \\
		&\quad
		=\varepsilon^{4s-N} b
		\Big(\int_{\R^{N}}(-\Delta)^{\frac{s}{2}}(u_{\varepsilon}^{(1)}+u_{\varepsilon}^{(2)})
		\,(-\Delta)^{\frac{s}{2}}\xi_{\varepsilon}\,dx\Big)^{2}
		+2\int_{\R^{N}}C_{\varepsilon}(x)\,\xi_{\varepsilon}^{2}\,dx.
	\end{aligned}
	\]
	Using Cauchy--Schwarz and the uniform boundedness of
	$\|(-\Delta)^{\frac{s}{2}}u_{\varepsilon}^{(i)}\|_{2}$ (from Theorem~\ref{Thm2.1}),
	the Kirchhoff term on the right-hand side can be bounded by
	\[
	\varepsilon^{4s-N} b
	\Big(\int_{\R^{N}}(-\Delta)^{\frac{s}{2}}(u_{\varepsilon}^{(1)}+u_{\varepsilon}^{(2)})
	\,(-\Delta)^{\frac{s}{2}}\xi_{\varepsilon}\,dx\Big)^{2}
	\le C\,\varepsilon^{2s}\int_{\R^{N}}\big|(-\Delta)^{\tfrac{s}{2}}\xi_{\varepsilon}\big|^{2}\,dx,
	\]
	for some $C>0$ independent of $\varepsilon$. Hence, for $\varepsilon$ small,
	we can absorb this term into the left-hand side and deduce that
	\[
	\|\xi_{\varepsilon}\|_{\varepsilon}^{2}
	\le C\int_{\R^{N}}C_{\varepsilon}(x)\,\xi_{\varepsilon}^{2}(x)\,dx,
	\]
	for some constant $C>0$ independent of $\varepsilon$.
	
	\medskip
	On the other hand, writing
	\[
	u_{\varepsilon}^{(i)}=U_{\varepsilon,y_{\varepsilon}^{(i)}}+\varphi_{\varepsilon}^{(i)},
	\qquad i=1,2,
	\]
	we have
	\[
	|C_{\varepsilon}(x)|
	\le C\Big(
	U_{\varepsilon,y_{\varepsilon}^{(1)}}^{\,p-1}(x)
	+U_{\varepsilon,y_{\varepsilon}^{(2)}}^{\,p-1}(x)
	+|\varphi_{\varepsilon}^{(1)}(x)|^{p-1}
	+|\varphi_{\varepsilon}^{(2)}(x)|^{p-1}
	\Big).
	\]
	Since $|\xi_{\varepsilon}(x)|\le 1$, we obtain
	\[
	\int_{\R^{N}}U_{\varepsilon,y_{\varepsilon}^{(i)}}^{\,p-1}\,\xi_{\varepsilon}^{2}\,dx
	\le \int_{\R^{N}}U_{\varepsilon,y_{\varepsilon}^{(i)}}^{\,p-1}\,dx
	\le C\,\varepsilon^{N},
	\qquad i=1,2,
	\]
	by a change of variables and the known decay of $U$.
	
	For the perturbation part, we use Hölder's inequality and the Sobolev
	embedding $H^{s}(\R^{N})\hookrightarrow L^{2^*_s}(\R^{N})$ with
	$2^*_s=\frac{2N}{N-2s}$. Set
	\[
	\alpha=\frac{2^*_s}{p-1},
	\qquad
	\beta=\frac{2^*_s}{2^*_s-(p-1)},
	\qquad
	\frac1\alpha+\frac1\beta=1.
	\]
	Then
	\[
	\int_{\R^{N}}|\varphi_{\varepsilon}^{(i)}|^{p-1}\xi_{\varepsilon}^{2}\,dx
	\le \|\varphi_{\varepsilon}^{(i)}\|_{L^{2^*_s}}^{p-1}
	\|\xi_{\varepsilon}\|_{L^{2\beta}}^{2}
	\le C\|\varphi_{\varepsilon}^{(i)}\|_{\varepsilon}^{\,p-1}
	\|\xi_{\varepsilon}\|_{\varepsilon}^{2}.
	\]
	By \eqref{eq5.20} we know that
	\[
	\|\varphi_{\varepsilon}^{(i)}\|_{\varepsilon}
	=O\big(\varepsilon^{\frac{N}{2}+m(1-\tau)}\big),
	\qquad i=1,2,
	\]
	with $m(1-\tau)>1$. Hence
	\[
	\int_{\R^{N}}|\varphi_{\varepsilon}^{(i)}|^{p-1}\xi_{\varepsilon}^{2}\,dx
	\le C\,\varepsilon^{\gamma}\,\|\xi_{\varepsilon}\|_{\varepsilon}^{2},
	\]
	for some $\gamma>0$, and $i=1,2$.
	
	Collecting the above estimates, we find
	\[
	\int_{\R^{N}}C_{\varepsilon}(x)\,\xi_{\varepsilon}^{2}\,dx
	\le C\,\varepsilon^{N} + C\,\varepsilon^{\gamma}\,\|\xi_{\varepsilon}\|_{\varepsilon}^{2}.
	\]
	Therefore, for $\varepsilon$ sufficiently small,
	\[
	\|\xi_{\varepsilon}\|_{\varepsilon}^{2}
	\le C\left(\varepsilon^{N}+\varepsilon^{\gamma}\,\|\xi_{\varepsilon}\|_{\varepsilon}^{2}\right)
	\]
	and, absorbing the last term into the left-hand side, we obtain
	\[
	\|\xi_{\varepsilon}\|_{\varepsilon}^{2}\le C\,\varepsilon^{N},
	\]
	which implies
	\[
	\|\xi_{\varepsilon}\|_{\varepsilon}
	\le C\,\varepsilon^{\frac{N}{2}}.
	\]
	The proof is complete.
\end{proof}
Next we study the asymptotic behavior of $\bar{\xi}_{\varepsilon}$.

\begin{Lem}\label{Lem5.5}
	Let
	\[
	\bar{\xi}_{\varepsilon}(x)
	=\xi_{\varepsilon}\big(\varepsilon x+y_{\varepsilon}^{i_0(1)}\big)
	\qquad\text{for some fixed }i_0\in\{1,\dots,k\}.
	\]
	Then there exist constants $d_{1},\dots,d_{N}\in\R$ such that, up to a subsequence,
	\[
	\bar{\xi}_{\varepsilon}\;\longrightarrow\;
	\sum_{i=1}^{N}d_i\,\partial_{x_i}U^{i_0}
	\quad\text{in }C^{1}_{\mathrm{loc}}(\R^{N})
	\quad\text{as }\varepsilon\to0.
	\]
\end{Lem}

\begin{proof}
	Recall from Lemma~\ref{Lem5.4} that
	\[
	\|\xi_{\varepsilon}\|_{\varepsilon}=O\big(\varepsilon^{\frac{N}{2}}\big),
	\]
	so, by the definition of the norm $\|\cdot\|_{\varepsilon}$ and the fact that
	$\inf_{\R^{N}}V>0$, we have
	\begin{equation}\label{eq5.26}
		\int_{\R^{N}}\big|(-\Delta)^{\frac{s}{2}}\xi_{\varepsilon}\big|^{2}\,dx
		\le C\,\varepsilon^{N-2s},
		\qquad
		\int_{\R^{N}}|\xi_{\varepsilon}|^{2}\,dx\le C\,\varepsilon^{N}.
	\end{equation}
	Define the rescaled functions
	\[
	\bar{\xi}_{\varepsilon}(x)
	=\xi_{\varepsilon}(\varepsilon x+y_{\varepsilon}^{i_0(1)}),
	\qquad
	\bar{u}_{\varepsilon}^{(i)}(x)
	=u_{\varepsilon}^{(i)}(\varepsilon x+y_{\varepsilon}^{i_0(1)}),
	\qquad
	\bar{\varphi}_{\varepsilon}^{(i)}(x)
	=\varphi_{\varepsilon}^{(i)}(\varepsilon x+y_{\varepsilon}^{i_0(1)}),
	\]
	for $i=1,2$. By the scaling of the fractional gradient,
	\[
	(-\Delta)^{\frac{s}{2}}\bar{w}(x)
	=\varepsilon^{s}\,(-\Delta)^{\frac{s}{2}}w(\varepsilon x+y_{\varepsilon}^{i_0(1)}),
	\]
	we obtain from \eqref{eq5.26} that
	\begin{equation}\label{eq5.27}
		\int_{\R^{N}}\big|(-\Delta)^{\frac{s}{2}}\bar{\xi}_{\varepsilon}\big|^{2}\,dx
		=\varepsilon^{2s-N}\int_{\R^{N}}\big|(-\Delta)^{\frac{s}{2}}\xi_{\varepsilon}\big|^{2}\,dx
		\le C,
		\qquad
		\int_{\R^{N}}|\bar{\xi}_{\varepsilon}|^{2}\,dx\le C,
	\end{equation}
	that is, $(\bar{\xi}_{\varepsilon})$ is bounded in $H^{s}(\R^{N})$.
	
	Similarly, from \eqref{eq5.20} and the definition of $\|\cdot\|_{\varepsilon}$ we have
	\[
	\|\varphi_{\varepsilon}^{(i)}\|_{\varepsilon}
	=O\big(\varepsilon^{\frac{N}{2}+m(1-\tau)}\big),
	\]
	so
	\[
	\int_{\R^{N}}\big|(-\Delta)^{\frac{s}{2}}\varphi_{\varepsilon}^{(i)}\big|^{2}\,dx
	\le C\,\varepsilon^{N+2m(1-\tau)-2s}.
	\]
	Therefore,
	\begin{equation}\label{eq5.29}
		\int_{\R^{N}}\big|(-\Delta)^{\frac{s}{2}}\bar{\varphi}_{\varepsilon}^{(i)}\big|^{2}\,dx
		=\varepsilon^{2s-N}\int_{\R^{N}}\big|(-\Delta)^{\frac{s}{2}}\varphi_{\varepsilon}^{(i)}\big|^{2}\,dx
		=O\big(\varepsilon^{2m(1-\tau)}\big)\to0,
		\qquad i=1,2.
	\end{equation}
	
	Using the equation satisfied by $\xi_{\varepsilon}$ in Lemma~\ref{Lem5.4},
	we can write it in weak form as
	\begin{equation}\label{eq5.30}
		\mathcal{A}_{\varepsilon}(\xi_{\varepsilon},\psi)
		=\int_{\R^{N}}C_{\varepsilon}(x)\,\xi_{\varepsilon}\psi\,dx
		\qquad\text{for all }\psi\in H^{s}(\R^{N}),
	\end{equation}
	where $\mathcal{A}_{\varepsilon}$ is the bilinear form involving the Kirchhoff
	coefficient and $V$, and
	\[
	C_{\varepsilon}(x)
	=p\int_{0}^{1}\big(tu_{\varepsilon}^{(1)}(x)+(1-t)u_{\varepsilon}^{(2)}(x)\big)^{p-1}\,dt.
	\]
	Passing to the rescaled variables $x\mapsto\varepsilon x+y_{\varepsilon}^{i_0(1)}$
	and test functions of the form
	\[
	\psi(x)=\Phi\Big(\frac{x-y_{\varepsilon}^{i_0(1)}}{\varepsilon}\Big),
	\qquad \Phi\in C_{0}^{\infty}(\R^{N}),
	\]
	we obtain that $\bar{\xi}_{\varepsilon}$ satisfies an equation of the form
	\begin{equation}\label{eq5.31}
		\begin{aligned}
			&\Big(a + b\int_{\R^{N}}\big|(-\Delta)^{\frac{s}{2}}\bar{u}_{\varepsilon}^{(1)}\big|^{2}\,dx\Big)
			(-\Delta)^{s}\bar{\xi}_{\varepsilon}
			+V\big(\varepsilon x+y_{\varepsilon}^{i_0(1)}\big)\bar{\xi}_{\varepsilon} \\
			&\qquad = b\Big(\int_{\R^{N}}(-\Delta)^{\frac{s}{2}}\big(\bar{u}_{\varepsilon}^{(1)}+\bar{u}_{\varepsilon}^{(2)}\big)
			\,(-\Delta)^{\frac{s}{2}}\bar{\xi}_{\varepsilon}\,dx\Big)(-\Delta)^{s}\bar{u}_{\varepsilon}^{(2)}
			+C_{\varepsilon}\big(\varepsilon x+y_{\varepsilon}^{i_0(1)}\big)\bar{\xi}_{\varepsilon},
		\end{aligned}
	\end{equation}
	in $H^{-s}(\R^{N})$.
	
	By the construction of $u_{\varepsilon}^{(i)}$ in Section~3,
	\[
	\bar{u}_{\varepsilon}^{(i)}(x)
	=\sum_{j=1}^{k}U^{j}\Big(x+\frac{y_{\varepsilon}^{i_0(1)}-y_{\varepsilon}^{j(i)}}{\varepsilon}\Big)
	+\bar{\varphi}_{\varepsilon}^{(i)}(x),
	\qquad i=1,2,
	\]
	where $U^{j}$ are the limiting profiles and $y_{\varepsilon}^{j(i)}$ are the
	concentration points of $u_{\varepsilon}^{(i)}$. Using Lemma~\ref{Lem5.3} we
	know that
	\[
	\frac{y_{\varepsilon}^{j(i)}-a_j}{\varepsilon}\to0
	\qquad\text{as }\varepsilon\to0,
	\]
	hence, up to a subsequence,
	\[
	\bar{u}_{\varepsilon}^{(i)}\to
	\sum_{j=1}^{k}U^{j} \quad\text{in }H^{s}_{\mathrm{loc}}(\R^{N}),
	\qquad
	\bar{u}_{\varepsilon}^{(2)}\to U^{i_0}
	\quad\text{in }H^{s}_{\mathrm{loc}}(\R^{N}),
	\]
	and, by \eqref{eq5.29},
	\[
	\bar{\varphi}_{\varepsilon}^{(i)}\to0
	\quad\text{in }H^{s}(\R^{N}),\qquad i=1,2.
	\]
	
	From \eqref{eq5.27}, \eqref{eq5.29} and the above convergence,
	we deduce
	\begin{equation}\label{eq5.32}
		\int_{\R^{N}}\big|(-\Delta)^{\frac{s}{2}}\bar{u}_{\varepsilon}^{(1)}\big|^{2}\,dx
		=\sum_{j=1}^{k}\int_{\R^{N}}\big|(-\Delta)^{\frac{s}{2}}U^{j}\big|^{2}\,dx
		+o(1),
	\end{equation}
	and
	\begin{equation}\label{eq5.33}
		\int_{\R^{N}}(-\Delta)^{\frac{s}{2}}\big(\bar{u}_{\varepsilon}^{(1)}+\bar{u}_{\varepsilon}^{(2)}-2U^{i_0}\big)
		\,(-\Delta)^{\frac{s}{2}}\bar{\xi}_{\varepsilon}\,dx\rightarrow0,
	\end{equation}
	as $\varepsilon\to0$.
	
	Moreover, by $(V_{1})$–$(V_{3})$ and Lemma~\ref{Lem5.3}, we have
	\[
	V\big(\varepsilon x+y_{\varepsilon}^{i_0(1)}\big)\to V(a_{i_0})
	\quad\text{uniformly on compact sets},
	\]
	and the nonlinearity satisfies
	\[
	C_{\varepsilon}\big(\varepsilon x+y_{\varepsilon}^{i_0(1)}\big)
	\to p\big(U^{i_0}(x)\big)^{p-1}
	\quad\text{in }L^{q}_{\mathrm{loc}}(\R^{N})
	\]
	for any $q\in[1,\infty)$, thanks to the bounds on
	$\varphi_{\varepsilon}^{(i)}$ and the convergence of $\bar{u}_{\varepsilon}^{(i)}$.
	
	Combining \eqref{eq5.31}–\eqref{eq5.33} and using the convergence
	$\bar{\xi}_{\varepsilon}\rightharpoonup\bar{\xi}$ in $H^{s}(\R^{N})$,
	we pass to the limit in the weak formulation and obtain that $\bar{\xi}$
	solves
	\begin{equation}\label{eq5.31-limit}
		\begin{aligned}
			&\Big(a + b\sum_{j=1}^{k}\int_{\R^{N}}
			\big|(-\Delta)^{\frac{s}{2}}U^{j}\big|^{2}\,dx\Big)(-\Delta)^{s}\bar{\xi}
			+2b\Big(\int_{\R^{N}}(-\Delta)^{\frac{s}{2}}U^{i_0}\,
			(-\Delta)^{\frac{s}{2}}\bar{\xi}\,dx\Big)(-\Delta)^{s}U^{i_0} \\
			&\qquad\qquad
			+V(a_{i_0})\,\bar{\xi}
			=p\big(U^{i_0}\big)^{p-1}\bar{\xi}
			\quad\text{in }H^{-s}(\R^{N}).
		\end{aligned}
	\end{equation}
	That is,
	\[
	\mathcal{L}_{+}\bar{\xi}=0,
	\]
	where $\mathcal{L}_{+}$ is exactly the linearized operator introduced in
	Proposition~\ref{Pro1.1}. By Proposition~\ref{Pro1.1}, the kernel of
	$\mathcal{L}_{+}$ is spanned by the derivatives of $U^{i_0}$, hence there
	exist constants $d_{1},\dots,d_{N}\in\R$ such that
	\[
	\bar{\xi}(x)
	=\sum_{i=1}^{N}d_i\,\partial_{x_i}U^{i_0}(x).
	\]
	
	Finally, by the bounds \eqref{eq5.27} and the equation \eqref{eq5.31},
	standard interior regularity for fractional Kirchhoff-type equations yields
	a uniform bound of $(\bar{\xi}_{\varepsilon})$ in $C^{1,\beta}_{\mathrm{loc}}(\R^{N})$
	for some $\beta\in(0,1)$. Therefore the weak convergence in $H^{s}$ can be
	upgraded to
	\[
	\bar{\xi}_{\varepsilon}\to\bar{\xi}
	\quad\text{in }C^{1}_{\mathrm{loc}}(\R^{N}),
	\]
	up to a subsequence. This completes the proof.
\end{proof}
Now we prove \eqref{eq5.22} by showing the following lemma.

\begin{Lem}\label{Lem5.6}
	Let $d_{i}$ be defined as in Lemma \ref{Lem5.5}. Then
	\[
	d_{i}=0 \quad \text{for all }i=1,2,\dots,N.
	\]
\end{Lem}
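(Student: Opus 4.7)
The plan is to apply the local Pohoz\v{a}ev identity of Lemma \ref{Lem2.2} to both solutions $u_\varepsilon^{(1)}$ and $u_\varepsilon^{(2)}$ on a ball $B_d(y_\varepsilon^{i_0(1)})$ with a radius $d\in(1,2)$ selected (as in \eqref{eq5.8}) so that all boundary integrals are of size $o(\varepsilon^\gamma)$ for some large $\gamma$. Subtracting the two identities, dividing through by $\|u_\varepsilon^{(1)}-u_\varepsilon^{(2)}\|_{L^\infty}$, and writing the resulting identity in terms of $\xi_\varepsilon$, the difference $(u_\varepsilon^{(1)})^2-(u_\varepsilon^{(2)})^2 = (u_\varepsilon^{(1)}+u_\varepsilon^{(2)})(u_\varepsilon^{(1)}-u_\varepsilon^{(2)})$ produces a bulk term of the form
\begin{equation*}
\int_{B_d(y_\varepsilon^{i_0(1)})}\frac{\partial V}{\partial x_j}\bigl(u_\varepsilon^{(1)}+u_\varepsilon^{(2)}\bigr)\xi_\varepsilon\,dx,
\end{equation*}
and on the right-hand side one gets boundary contributions plus a Kirchhoff-type cross term involving $\int(-\Delta)^{s/2}(u_\varepsilon^{(1)}+u_\varepsilon^{(2)})\cdot(-\Delta)^{s/2}\xi_\varepsilon\,dx$ multiplied by boundary integrals.

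The second step is to estimate each piece. The boundary integrals, chosen via Lemma \ref{Lem5.2} applied to the sum of integrable densities arising from $u_\varepsilon^{(i)}$ and $\xi_\varepsilon$, can be made $O(\varepsilon^{\gamma})$ for any $\gamma>0$ thanks to the decay estimates \eqref{eq5.9}--\eqref{eq5.13} together with $\|\xi_\varepsilon\|_\varepsilon=O(\varepsilon^{N/2})$ from Lemma \ref{Lem5.4}. For the bulk term, I would decompose $u_\varepsilon^{(1)}+u_\varepsilon^{(2)} = 2U^{i_0}_{\varepsilon,y_\varepsilon^{i_0(1)}} + (\text{lower order})$ and use $(V_3)$ to write $\partial V/\partial x_j = mc_{i_0,j}|x_j-a_{i_0,j}|^{m-2}(x_j-a_{i_0,j})+O(|x-a_{i_0}|^m)$. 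After rescaling $x\mapsto \varepsilon x + y_\varepsilon^{i_0(1)}$ and using $|y_\varepsilon^{i_0(1)}-a_{i_0}|=o(\varepsilon)$ from Lemma \ref{Lem5.3}, the leading order becomes
\begin{equation*}
2mc_{i_0,j}\,\varepsilon^{N+m-1}\int_{\mathbb{R}^N}|x_j|^{m-2}x_j\,U^{i_0}(x)\,\bar\xi_\varepsilon(x)\,dx + o(\varepsilon^{N+m-1}).
\end{equation*}
Using Lemma \ref{Lem5.5}, $\bar\xi_\varepsilon\to \sum_{i=1}^N d_i\partial_{x_i}U^{i_0}$ in $C^1_{\mathrm{loc}}$, and the polynomial decay from Proposition \ref{Pro1.1}(iii) allows one to pass to the limit after dividing by $\varepsilon^{N+m-1}$.

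The third step is to identify the resulting limit. Because $U^{i_0}$ is radial, the integral $\int |x_j|^{m-2}x_j U^{i_0}\partial_{x_i}U^{i_0}\,dx$ vanishes for $i\ne j$ by odd symmetry, and for $i=j$ one computes via integration by parts
\begin{equation*}
\int_{\mathbb{R}^N}|x_j|^{m-2}x_j\,U^{i_0}\,\partial_{x_j}U^{i_0}\,dx = -\frac{m-1}{2}\int_{\mathbb{R}^N}|x_j|^{m-2}(U^{i_0})^2\,dx \ne 0.
\end{equation*}
Hence the Pohoz\v{a}ev identity in direction $x_j$ forces $c_{i_0,j}\,d_j\cdot(\text{nonzero constant})=0$, and since $c_{i_0,j}\ne 0$ by $(V_3)$, we conclude $d_j=0$ for each $j=1,\dots,N$.

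The main obstacle I anticipate is the careful handling of the nonlocal Kirchhoff cross term $\varepsilon^{4s-N}b\bigl(\int(-\Delta)^{s/2}(u_\varepsilon^{(1)}+u_\varepsilon^{(2)})\cdot(-\Delta)^{s/2}\xi_\varepsilon\bigr)\cdot(\text{boundary integral})$, which has no pointwise analogue and requires combining the $H^s$-bound from \eqref{eq5.30}, the boundary control \eqref{eq5.11}, and the smallness of $\|\varphi_\varepsilon^{(i)}\|_\varepsilon$ from \eqref{eq5.20} to show it is $o(\varepsilon^{N+m-1})$. Once this and the delicate cancellation on the boundary are controlled, the algebraic conclusion $d_j=0$ follows directly from the non-degeneracy of the coefficient $c_{i_0,j}\int |x_j|^{m-2}(U^{i_0})^2\,dx$.
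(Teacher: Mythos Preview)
Your proposal is correct and follows essentially the same route as the paper's proof: subtract the two Pohoz\v{a}ev identities on $B_d(y_\varepsilon^{i_0(1)})$, control the boundary terms using \eqref{eq5.8}--\eqref{eq5.13}, \eqref{eq5.20} and Lemma~\ref{Lem5.4}, then rescale the bulk term via $(V_3)$ and Lemma~\ref{Lem5.3} and pass to the limit with Lemma~\ref{Lem5.5} to isolate the coefficient $D_j d_j\,\varepsilon^{N+m-1}$. One minor imprecision: the boundary contributions are not $O(\varepsilon^\gamma)$ for \emph{arbitrary} $\gamma$ (the $\varphi_\varepsilon^{(i)}$ parts only give $O(\varepsilon^{N+m(1-\tau)})$ via \eqref{eq5.20}), but since $m\tau<1$ this is still $o(\varepsilon^{N+m-1})$ and your conclusion goes through unchanged.
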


\begin{proof}
	We use the Pohoz\v{a}ev-type identity \eqref{eq3.1}. Apply \eqref{eq3.1} to
	$u_{\varepsilon}^{(1)}$ and $u_{\varepsilon}^{(2)}$ with
	\(\Omega=B_{d}(y_{\varepsilon}^{i_0(1)})\), where $d\in(1,2)$ is chosen as in
	Lemma \ref{Lem5.5}. Subtracting the two identities we obtain, for each
	fixed $j\in\{1,\dots,N\}$,
	\begin{equation}\label{eq5.34}
		\begin{aligned}
			\int_{B_{d}(y_{\varepsilon}^{i_0(1)})}
			\frac{\partial V}{\partial x_{j}}\bigl((u_{\varepsilon}^{(1)})^{2}
			-(u_{\varepsilon}^{(2)})^{2}\bigr)\,dx
			=\sum_{\ell=1}^{4} I_{\ell},
		\end{aligned}
	\end{equation}
	where, using
	\[
	\xi_{\varepsilon}
	=\frac{u_{\varepsilon}^{(1)}-u_{\varepsilon}^{(2)}}
	{\|u_{\varepsilon}^{(1)}-u_{\varepsilon}^{(2)}\|_{L^{\infty}(\R^{N})}},
	\qquad |\,\xi_{\varepsilon}|\le 1,
	\]
	we can rewrite the right-hand side as
	\[
	\int_{B_{d}(y_{\varepsilon}^{i_0(1)})}
	\frac{\partial V}{\partial x_{j}}\bigl(u_{\varepsilon}^{(1)}
	+u_{\varepsilon}^{(2)}\bigr)\xi_{\varepsilon}\,dx
	=\sum_{\ell=1}^{4} I_{\ell},
	\]
	with
	\begin{align*}
		I_{1}&=
		\bigl(\varepsilon^{2s} a+\varepsilon^{4s-N} b
		\int_{\R^{N}}|(-\Delta)^{\frac{s}{2}} u_{\varepsilon}^{(1)}|^{2}\,dx\bigr)
		\int_{\partial B_{d}(y_{\varepsilon}^{i_0(1)})}
		\bigl(((-\Delta)^{\frac{s}{2}} u_{\varepsilon}^{(1)}
		+(-\Delta)^{\frac{s}{2}} u_{\varepsilon}^{(2)})
		\cdot(-\Delta)^{\frac{s}{2}} \xi_{\varepsilon}\bigr)\nu_{j}\,d\sigma,\\[0.2cm]
		I_{2}&=
		-2\bigl(\varepsilon^{2s} a+\varepsilon^{4s-N} b
		\int_{\R^{N}}|(-\Delta)^{\frac{s}{2}} u_{\varepsilon}^{(1)}|^{2}\,dx\bigr)
		\int_{\partial B_{d}(y_{\varepsilon}^{i_0(1)})}
		\Bigl(\frac{\partial u_{\varepsilon}^{(1)}}{\partial \nu}
		\frac{\partial u_{\varepsilon}^{(1)}}{\partial x_{j}}
		-\frac{\partial u_{\varepsilon}^{(2)}}{\partial \nu}
		\frac{\partial u_{\varepsilon}^{(2)}}{\partial x_{j}}\Bigr)\,d\sigma,\\[0.2cm]
		I_{3}&=
		\varepsilon^{4s-N} b
		\int_{\partial B_{d}(y_{\varepsilon}^{i_0(1)})}
		\Bigl(|(-\Delta)^{\frac{s}{2}} u_{\varepsilon}^{(2)}|^{2}\nu_{j}
		-2\frac{\partial u_{\varepsilon}^{(2)}}{\partial \nu}
		\frac{\partial u_{\varepsilon}^{(2)}}{\partial x_{j}}\Bigr)d\sigma
		\int_{\R^{N}}
		(-\Delta)^{\frac{s}{2}}(u_{\varepsilon}^{(1)}+u_{\varepsilon}^{(2)})
		(-\Delta)^{\frac{s}{2}}\xi_{\varepsilon}\,dx,\\[0.2cm]
		I_{4}&=
		\int_{\partial B_{d}(y_{\varepsilon}^{i_0(1)})}
		V(x)\bigl(u_{\varepsilon}^{(1)}+u_{\varepsilon}^{(2)}\bigr)\xi_{\varepsilon}\nu_{j}\,d\sigma
		-2\int_{\partial B_{d}(y_{\varepsilon}^{i_0(1)})}
		A_{\varepsilon}(x)\,\xi_{\varepsilon}\nu_{j}\,d\sigma,
	\end{align*}
	and
	\[
	A_{\varepsilon}(x)
	=\int_{0}^{1}\bigl(tu_{\varepsilon}^{(1)}(x)+(1-t)u_{\varepsilon}^{(2)}(x)\bigr)^{p}\,dt.
	\]
	
	\medskip\noindent
	\textbf{Step 1: Estimate of the right-hand side.}
	By Theorem~\ref{Thm2.1} and Lemma~\ref{Lem5.4},
	\[
	\varepsilon^{2s} a+\varepsilon^{4s-N} b
	\int_{\R^{N}}|(-\Delta)^{\frac{s}{2}} u_{\varepsilon}^{(i)}|^{2}\,dx
	=O(\varepsilon^{2s}),\quad i=1,2,
	\]
	and by the construction of $u_{\varepsilon}^{(i)}$ and Lemmas~\ref{Lem5.1}–\ref{Lem5.3}
	we have
	\[
	\int_{\partial B_{d}(y_{\varepsilon}^{i_0(1)})}
	\bigl(|(-\Delta)^{\frac{s}{2}} u_{\varepsilon}^{(i)}|^{2}
	+|\nabla u_{\varepsilon}^{(i)}|^{2}\bigr)\,d\sigma
	=O\Big(\big\|(-\Delta)^{\frac{s}{2}}\varphi_{\varepsilon}^{(i)}\big\|_{2}^{2}\Big)
	=O\big(\varepsilon^{N+2m(1-\tau)}\big),
	\]
	for $i=1,2$. Using also Lemma~\ref{Lem5.4},
	\[
	\int_{\R^{N}}\big|(-\Delta)^{\frac{s}{2}}\xi_{\varepsilon}\big|^{2}\,dx
	\le C\,\varepsilon^{N-2s},
	\]
	we obtain
	\[
	I_{1}+I_{2}+I_{3}
	=O\big(\varepsilon^{N+2m(1-\tau)}\big).
	\]
	Moreover, since $|\,\xi_{\varepsilon}|\le1$ and $u_{\varepsilon}^{(i)}$ are uniformly
	bounded in $H^{s}(\R^{N})$ (hence in $L^{2^{*}_{s}}(\R^{N})$), again by
	H\"older's inequality and Lemma~\ref{Lem5.3} we have
	\[
	\int_{\partial B_{d}(y_{\varepsilon}^{i_0(1)})}
	V(x)\bigl(u_{\varepsilon}^{(1)}+u_{\varepsilon}^{(2)}\bigr)\xi_{\varepsilon}\nu_{j}\,d\sigma
	=O\big(\varepsilon^{N+m(1-\tau)}\big),
	\]
	and similarly, using the growth of $A_{\varepsilon}(x)$,
	\[
	\int_{\partial B_{d}(y_{\varepsilon}^{i_0(1)})}
	A_{\varepsilon}(x)\,\xi_{\varepsilon}\nu_{j}\,d\sigma
	=O\big(\varepsilon^{(N+m(1-\tau))p}\big).
	\]
	Therefore,
	\begin{equation}\label{eq5.35}
		\text{RHS of }\eqref{eq5.34}
		=O\big(\varepsilon^{N+m(1-\tau)}\big).
	\end{equation}
	
	\medskip\noindent
	\textbf{Step 2: Asymptotics of the left-hand side.}
	Using $(V_{3})$ and the expansion near $a_{i_0}$, we write
	\[
	\frac{\partial V}{\partial x_{j}}(x)
	=m c_{i_0,j}|x_{j}-a_{i_0,j}|^{m-2}(x_{j}-a_{i_0,j})
	+O\big(|x_{j}-a_{i_0,j}|^{m}\big).
	\]
	Hence,
	\begin{equation*}
		\begin{aligned}
			&\int_{B_{d}(y_{\varepsilon}^{i_0(1)})}
			\frac{\partial V}{\partial x_{j}}\bigl(u_{\varepsilon}^{(1)}+u_{\varepsilon}^{(2)}\bigr)
			\xi_{\varepsilon}\,dx \\
			&=m c_{i_0,j}
			\int_{B_{d}(y_{\varepsilon}^{i_0(1)})}
			|x_{j}-a_{i_0,j}|^{m-2}(x_{j}-a_{i_0,j})
			\bigl(u_{\varepsilon}^{(1)}+u_{\varepsilon}^{(2)}\bigr)\xi_{\varepsilon}\,dx \\
			&\quad
			+O\Big(
			\int_{B_{d}(y_{\varepsilon}^{i_0(1)})}
			|x_{j}-a_{i_0,j}|^{m}\bigl(u_{\varepsilon}^{(1)}+u_{\varepsilon}^{(2)}\bigr)
			|\xi_{\varepsilon}|\,dx\Big).
		\end{aligned}
	\end{equation*}
	
	For the main term, we change variables
	\(x=\varepsilon z+y_{\varepsilon}^{i_0(1)}\) and use the decompositions
	\[
	u_{\varepsilon}^{(i)}(x)
	=\sum_{h=1}^{k}U^{h}\Big(\frac{x-y_{\varepsilon}^{h(i)}}{\varepsilon}\Big)
	+\varphi_{\varepsilon}^{(i)}(x),
	\qquad i=1,2,
	\]
	together with Lemma~\ref{Lem5.3} and Lemma~\ref{Lem5.5}. Since
	\(|y_{\varepsilon}^{h(i)}-a_{h}|=o(\varepsilon)\), we have
	\[
	\frac{y_{\varepsilon}^{h(i)}-a_{h}}{\varepsilon}\to0, \qquad
	\bar{\xi}_{\varepsilon}(z)
	=\xi_{\varepsilon}(\varepsilon z+y_{\varepsilon}^{i_0(1)})
	\to\sum_{i=1}^{N}d_{i}\,\partial_{x_{i}}U^{i_0}(z)
	\quad\text{in }C^{1}_{\mathrm{loc}}(\R^{N}).
	\]
	Moreover,
	\[
	|x_{j}-a_{i_0,j}|^{m-2}(x_{j}-a_{i_0,j})
	=\varepsilon^{m-1}\Big|z_{j}
	+\frac{y_{\varepsilon,j}^{i_0(1)}-a_{i_0,j}}{\varepsilon}\Big|^{m-2}
	\Big(z_{j}+\frac{y_{\varepsilon,j}^{i_0(1)}-a_{i_0,j}}{\varepsilon}\Big),
	\]
	so, using the decay of $U^{i_0}$ and the convergence of
	$\bar{\xi}_{\varepsilon}$, we obtain
	\begin{equation}\label{eq5.36}
		\begin{aligned}
			& m c_{i_0,j}
			\int_{B_{d}(y_{\varepsilon}^{i_0(1)})}
			|x_{j}-a_{i_0,j}|^{m-2}(x_{j}-a_{i_0,j})
			\bigl(u_{\varepsilon}^{(1)}+u_{\varepsilon}^{(2)}\bigr)\xi_{\varepsilon}\,dx \\
			&=m c_{i_0,j}\,\varepsilon^{N+m-1}
			\int_{B_{d/\varepsilon}(0)}
			\Big|z_{j}
			+\frac{y_{\varepsilon,j}^{i_0(1)}-a_{i_0,j}}{\varepsilon}\Big|^{m-2}
			\Big(z_{j}+\frac{y_{\varepsilon,j}^{i_0(1)}-a_{i_0,j}}{\varepsilon}\Big) \\
			&\qquad\qquad\qquad\qquad\quad
			\times\Big(U^{i_0}(z)+U^{i_0}\Big(z+\frac{y_{\varepsilon}^{i_0(1)}
				-y_{\varepsilon}^{i_0(2)}}{\varepsilon}\Big)\Big)
			\bar{\xi}_{\varepsilon}(z)\,dz + o\big(\varepsilon^{N+m-1}\big) \\
			&=2m c_{i_0,j}\,\varepsilon^{N+m-1}
			\sum_{i=1}^{N}d_{i}
			\int_{\R^{N}}|z_{j}|^{m-2}z_{j}\,U^{i_0}(z)\,\partial_{x_{i}}U^{i_0}(z)\,dz
			+o\big(\varepsilon^{N+m-1}\big).
		\end{aligned}
	\end{equation}
	Since $U^{i_0}$ is radial and strictly decreasing, the integrals vanish for
	$i\ne j$ and
	\[
	\int_{\R^{N}}|z_{j}|^{m-2}z_{j}\,U^{i_0}(z)\,\partial_{x_{j}}U^{i_0}(z)\,dz\neq0.
	\]
	Define
	\[
	D_{j}
	=2m c_{i_0,j}
	\int_{\R^{N}}|z_{j}|^{m-2}z_{j}\,U^{i_0}(z)\,\partial_{x_{j}}U^{i_0}(z)\,dz\neq0.
	\]
	Then \eqref{eq5.36} yields
	\begin{equation}\label{eq5.38}
		m c_{i_0,j}
		\int_{B_{d}(y_{\varepsilon}^{i_0(1)})}
		|x_{j}-a_{i_0,j}|^{m-2}(x_{j}-a_{i_0,j})
		\bigl(u_{\varepsilon}^{(1)}+u_{\varepsilon}^{(2)}\bigr)\xi_{\varepsilon}\,dx
		= D_{j}d_{j}\,\varepsilon^{N+m-1}
		+o\big(\varepsilon^{N+m-1}\big).
	\end{equation}
	
	For the error term coming from the $O(|x_{j}-a_{i_0,j}|^{m})$ part of
	$\partial_{x_{j}}V$, by H\"older's inequality, Lemma~\ref{Lem5.3} and
	Lemma~\ref{Lem5.4} we have
	\begin{equation}\label{eq5.39}
		\int_{B_{d}(y_{\varepsilon}^{i_0(1)})}
		|x_{j}-a_{i_0,j}|^{m}
		\bigl|u_{\varepsilon}^{(1)}+u_{\varepsilon}^{(2)}\bigr|
		|\xi_{\varepsilon}|\,dx
		=O\big(\varepsilon^{N+m}\big).
	\end{equation}
	Combining \eqref{eq5.38} and \eqref{eq5.39}, we obtain
	\begin{equation}\label{eq5.40}
		\text{LHS of }\eqref{eq5.34}
		=D_{j}d_{j}\,\varepsilon^{N+m-1}
		+o\big(\varepsilon^{N+m-1}\big).
	\end{equation}
	
	\medskip\noindent
	\textbf{Step 3: Conclusion.}
	From \eqref{eq5.35} and \eqref{eq5.40} it follows that
	\[
	D_{j}d_{j}\,\varepsilon^{N+m-1}
	+o\big(\varepsilon^{N+m-1}\big)
	=O\big(\varepsilon^{N+m(1-\tau)}\big).
	\]
	Recall that $m>1$ and $m\tau<1$, so we can choose $\tau>0$ such that
	\[
	m(1-\tau)>m-1.
	\]
	Then, as $\varepsilon\to0$,
	\[
	\varepsilon^{N+m(1-\tau)}
	=o\big(\varepsilon^{N+m-1}\big),
	\]
	and hence the above identity implies necessarily $D_{j}d_{j}=0$. Since
	$D_{j}\neq0$ by construction, we conclude that $d_{j}=0$. As $j$ was
	arbitrary in $\{1,\dots,N\}$, we obtain
	\[
	d_{i}=0 \quad \text{for all }i=1,2,\dots,N.
	\]
	The proof is complete.
\end{proof}

{\bf Proof of Theorem \ref{Thm1.2}:}
	Assume by contradiction that there exist two distinct solutions
	\(u_{\varepsilon}^{(i)}\), \(i=1,2\), constructed in Section~4.  
	Recall
	\[
	\xi_{\varepsilon}
	=\frac{u_{\varepsilon}^{(1)}-u_{\varepsilon}^{(2)}}
	{\|u_{\varepsilon}^{(1)}-u_{\varepsilon}^{(2)}\|_{L^{\infty}(\R^{N})}},
	\qquad
	\|\xi_{\varepsilon}\|_{L^{\infty}(\R^{N})}=1,
	\]
	and, for each \(i_0\in\{1,\dots,k\}\),
	\[
	\bar{\xi}_{\varepsilon,i_0}(x)
	=\xi_{\varepsilon}\bigl(\varepsilon x+y_{\varepsilon}^{i_0(1)}\bigr).
	\]
	
	By Lemmas~\ref{Lem5.5} and \ref{Lem5.6} we know that, for every fixed \(R>0\),
	\begin{equation}\label{eq:xi-local}
		\|\bar{\xi}_{\varepsilon,i_0}\|_{L^{\infty}(B_R(0))}\;\longrightarrow\;0
		\quad\text{as }\varepsilon\to0,
	\end{equation}
	and by \eqref{eq5.21} the convergence is uniform as \(|x|\to\infty\).
	Consequently, for each \(i_0\) and fixed \(R>0\),
	\[
	\|\xi_{\varepsilon}\|_{L^{\infty}\bigl(B_{R\varepsilon}(y_{\varepsilon}^{i_0(1)})\bigr)}
	=\|\bar{\xi}_{\varepsilon,i_0}\|_{L^{\infty}(B_R(0))}\;\longrightarrow\;0.
	\]
	Since the two families of concentration points satisfy
	\(|y_{\varepsilon}^{i(1)}-y_{\varepsilon}^{i(2)}|=o(\varepsilon)\), the above
	estimate also holds if we replace \(y_{\varepsilon}^{i_0(1)}\) by
	\(y_{\varepsilon}^{i_0(2)}\). Hence, for every fixed \(R>0\),
	\begin{equation}\label{eq:near-peaks}
		\|\xi_{\varepsilon}\|_{L^{\infty}\left(\bigcup_{i=1}^{k}
			B_{R\varepsilon}(y_{\varepsilon}^{i})\right)}\;\longrightarrow\;0
		\quad\text{as }\varepsilon\to0.
	\end{equation}
	
	On the other hand, in the exterior region
	\(\R^{N}\setminus\bigcup_{i=1}^{k}B_{R\varepsilon}(y_{\varepsilon}^{i})\),
	the function \(\xi_{\varepsilon}\) solves a linear equation with uniformly
	positive potential and small right-hand side. By the same barrier and
	maximum principle argument as in \cite{R-Yang2}, we obtain, for the same fixed \(R>0\),
	\begin{equation}\label{eq:far-region}
		\|\xi_{\varepsilon}\|_{L^{\infty}\left(
			\R^{N}\setminus\bigcup_{i=1}^{k}B_{R\varepsilon}(y_{\varepsilon}^{i})\right)}
		\rightarrow0
		\quad\text{as }\varepsilon\to0.
	\end{equation}
	
	Combining \eqref{eq:near-peaks} and \eqref{eq:far-region} yields
	\[
	\|\xi_{\varepsilon}\|_{L^{\infty}(\R^{N})}\;\rightarrow\;0
	\quad\text{as }\varepsilon\to0,
	\]
	which contradicts the normalization
	\(\|\xi_{\varepsilon}\|_{L^{\infty}(\R^{N})}=1\).  
	Therefore, for \(\varepsilon>0\) small enough, the semiclassical multi-peak
	bounded state obtained in Theorem~\ref{Thm1.1} is unique. The proof is complete.
	\qed

\section*{Acknowledgment}

We thank the anonymous referee for carefully reading our manuscript and for the valuable comments that helped improve it.
This work is supported by National Natural Science Foundation of China (12301145,12161007,12261107) and Yunnan Fundamental Research Projects (202301AU070144, 202401AU070123). 

\medskip
{\bf Data availability:}  Data sharing is not applicable to this article as no new data were created or analyzed in this study.

\medskip
{\bf Conflict of Interests:} The author declares that there is no conflict of interest.

\bibliographystyle{plain}
\bibliography{yang}

\end{document}